\newlength\oldparindent
\renewcommand{\subparagraph}[1]{\medskip\noindent\textbf{#1}}
\setlist{noitemsep,labelwidth=*,leftmargin=*,align=left}
\setlist[1]{labelindent=1em}
\setlist[enumerate,1]{label=(\alph*)}
\setlist[enumerate,2]{label=(\arabic*),ref=\theenumi~(\arabic*)}
\setlist[description]{font=\normalfont,leftmargin=!}
\newlist{enumprop}{enumerate}{2}  
\setlist[enumprop,1]{label=(\alph*),ref=(\alph*)}
\setlist[enumprop,2]{label=(\arabic*),ref=\theenumpropi~(\arabic*)}
\newlist{enumcond}{enumerate}{2}  
\setlist[enumcond,1]{label=(\roman*),ref=(\roman*)}
\setlist[enumcond,2]{label=(\arabic*),ref=\theenumcondi~(\arabic*)}
\newlist{enumstep}{enumerate}{2}  
\setlist[enumstep,1]{label=(\arabic*),ref=(\arabic*)}
\setlist[enumstep,2]{label=(\alph*),ref=\theenumstepi~(\alph*)}
\newlist{enumpart}{enumerate}{2}  
\setlist[enumpart,1]{label=(\alph*),ref=(\alph*)}
\setlist[enumpart,2]{label=(\arabic*),ref=\theenumparti~(\arabic*)}
\newlist{enumop}{enumerate}{2}  
\setlist[enumop,1]{label=(\roman*),ref=(\roman*)}
\setlist[enumop,2]{label=(\arabic*),ref=\theenumopi~(\arabic*)}
\newlist{enummod}{enumerate}{2}  
\setlist[enummod,1]{label=(\arabic*),ref=(\arabic*)}
\setlist[enummod,2]{label=(\alph*),ref=\theenummodi~(\alph*)}
\definecolor{stdred}{RGB}{255,31,91}
\definecolor{stdgreen}{RGB}{0,205,108}
\definecolor{stdblue}{RGB}{0,152,222}
\definecolor{stdpurple}{RGB}{175,88,186}
\definecolor{stdyellow}{RGB}{255,198,30}
\definecolor{stdorange}{RGB}{242,133,34}
\definecolor{stdbrown}{RGB}{166,118,29}
\definecolor{stdgrey}{RGB}{160,177,186} 
\definecolor{grey}{RGB}{127,127,127} 
\newcolumntype{L}[2]{>{#1\raggedright\let\newline\\\arraybackslash\hspace{0pt}}m{#2}}
\newcolumntype{C}[2]{>{#1\centering\let\newline\\\arraybackslash\hspace{0pt}}m{#2}}
\newcolumntype{R}[2]{>{#1\raggedleft\let\newline\\\arraybackslash\hspace{0pt}}m{#2}}
\theoremstyle{change}
\newtheorem{definition}{Definition}[section]
\newtheorem{lemma}[definition]{Lemma}
\newtheorem{example}[definition]{Example}
\newtheorem{theorem}[definition]{Theorem}
\newtheorem{corollary}[definition]{Corollary}
\newtheorem{notation}[definition]{Notation}
\newtheorem{observation}[definition]{Observation}
\theoremstyle{plain}
\theoremstyle{changebreak}
\theoremstyle{break}
\theoremstyle{change}
\newtheorem{lemmacite}[definition]{Lemma}
\theoremstyle{nonumberplain}
\newtheorem{proof}{Proof}
\theoremstyle{nonumberbreak}
\NewDocumentCommand{\instring}{mmmm}
{
	\oleks_instring:nnnn { #1 } { #2 } { #3 } { #4 }
}
\newcommand{\RR}{\ensuremath{\mathbb{R}}}
\def\setdelimiter{\colon\,}
\def\replacecolonbydelimiter#1:#2\relax{#1\setdelimiter#2}
\def\setcontents#1{\replacecolonbydelimiter#1\relax}
\NewDocumentCommand{\Set}{m}{
	\instring{:}{#1}{
		\ensuremath{\left\{\setcontents{#1}\right\}}%
	}{
		\ensuremath{\left\{#1\right\}}%
	}%
}
\renewcommand{\emptyset}{\varnothing}
\newcommand{\abs}[1]{\ensuremath{\lvert#1\rvert}}
\newcommand{\complexitylangformat}[1]{\fontfamily{cmr}\textup{\textsc{#1}}}
\newcommand{\T}{^{\mathsf T}}
\newcommand{\vtr}[1]{\begin{pmatrix} #1\vtrarg}
	\newcommand{\vtrarg}{\@ifnextchar\bgroup{\vtrnextarg}{\end{pmatrix}}}
\newcommand{\vtrnextarg}[1]{ \\#1\@ifnextchar\bgroup{\vtrnextarg}{\end{pmatrix}}}
\newcommand{\vtrtarg}{\@ifnextchar\bgroup{\vtrtnextarg}{\right)}}
\newcommand{\vtrtnextarg}[1]{ \;#1\@ifnextchar\bgroup{\vtrtnextarg}{\right)^\T}}
\renewcommand{\SetProgSty}[1]{\renewcommand{\ProgSty}[1]{\textnormal{\csname#1\endcsname{##1}}\unskip}}
\newcommand{\algass}{\ensuremath{\leftarrow}}
\renewcommand{\O}[1]{\ensuremath{\mathcal{O}\left(#1\right)}}
\tikzset{node/.style={circle,draw=black,fill=black,scale=0.5}}
\tikzset{edge/.style={very thick}}
\tikzset{arc/.style={edge,->}}
\tikzset{labelnode/.style={anchor=base}}
\tikzset{highlightpath/.style={draw,line width=8pt,red!20,line cap=round,line join=round}}
\tikzset{>=latex'}
\tikzset{
	dot diameter/.store in=\dot@diameter,
	dot diameter=2pt,
	dot spacing/.store in=\dot@spacing,
	dot spacing=4pt,
	dots/.style={
		line width=\dot@diameter,
		line cap=round,
		dash pattern=on 0pt off \dot@spacing
	}
}
\tikzset{dotted/.style={very thick,dot diameter=2pt,dot spacing=4pt,dots}}
\pgfplotsset{compat=1.18}
\crefname{enumpropi}{Property}{Properties}
\crefname{enumpropii}{Property}{Properties}
\crefname{enumcondi}{Condition}{Conditions}
\crefname{enumcondii}{Condition}{Conditions}
\crefname{enumstepi}{Step}{Steps}
\crefname{enumstepii}{Step}{Steps}
\crefname{enumparti}{Part}{Parts}
\crefname{enumpartii}{Part}{Parts}
\crefname{enumopi}{Operation}{Operations}
\crefname{enumopii}{Operation}{Operations}
\crefname{enummodi}{Modification}{Modifications}
\crefname{enummodii}{Modification}{Modifications}
\crefname{conjecture}{Conjecture}{Conjectures}
\crefname{convention}{Convention}{Conventions}
\crefname{notation}{Notation}{Notations}
\crefname{observation}{Observation}{Observations}
\crefname{exercise}{Exercise}{Exercises}
\crefname{note}{Note}{Notes}
\crefname{claim}{Claim}{Claims}
\crefname{problem}{Problem}{Problems}
\crefname{assumption}{Assumption}{Assumptions}
\crefname{namedtheorem}{Theorem}{Theorems}
\crefname{namedtheoremcite}{Theorem}{Theorems}
\crefname{namedlemma}{Lemma}{Lemmas}
\crefname{namedlemmacite}{Lemma}{Lemmas}
\crefname{namedconjecture}{Conjecture}{Conjectures}
\newcommand{\functDef}[2]{%
	\ifthenelse{\isempty{#2}}{%
		\ensuremath{#1}%
	}{%
		\ensuremath{#1\left(#2\right)}%
	}%
}
\newcommand{\feasibleset}{\ensuremath{X}}
\newcommand{\outcomeset}{\ensuremath{Y}}
\newcommand{\objective}[2][]{\ifthenelse{\isempty{#1}}{\functDef{f}{#2}}{\functDef{f_{#1}}{#2}}}
\newcommand{\problemp}[1][2]{\ensuremath{\Pi_{#1}}}
\newcommand{\problempws}[2][2]{\ensuremath{\Pi_{#1}^{\text{WS}}(#2)}}
\newcommand{\vect}[1]{\ensuremath{\begin{pmatrix}#1\end{pmatrix}}}
\newcommand{\conv}[1]{\text{conv}\left(#1\right)}
\newcommand{\calB}{\ensuremath{\mathcal{B}}}
\newcommand{\calC}{\ensuremath{\mathcal{C}}}
\newcommand{\calE}{\ensuremath{\mathcal{E}}}
\newcommand{\calF}{\ensuremath{\mathcal{F}}}
\newcommand{\calI}{\ensuremath{\mathcal{I}}}
\newcommand{\calN}{\ensuremath{\mathcal{N}}}
\newcommand{\calP}{\ensuremath{\mathcal{P}}}
\newcommand{\calR}{\ensuremath{\mathcal{R}}}
\newcommand{\calS}{\ensuremath{\mathcal{S}}}
\DeclareMathOperator{\rank}{rank}
\newcommand{\adjrel}{\ensuremath{\calR}}
\newcommand{\adjgraph}{\ensuremath{D_\adjrel}}
\newcommand{\esa}{extremely-supportive algorithm}
\newcommand{\tindep}{\ensuremath{T_\text{indep}(M)}}
\newcommand{\matrestr}[2][M]{\ensuremath{#1 | #2}}
\newcommand{\matcontr}[2][M]{\ensuremath{#1 / #2}}
\newcommand{\bmwb}{\complexitylangformat{bmwb}}
\title{An adjacency-based algorithm for computing all extreme-supported non-dominated points of a bi-objective combinatorial optimisation problem}
\author[1]{%
	Oliver Bachtler
	\orcidlink{0000-0001-7942-0750}%
	\thanks{Corresponding author, o.bachtler@math.rptu.de}\textsuperscript{,}%
}
\author[1]{%
	Felix Fritz
	\orcidlink{0009-0000-1392-6556}%
}
\author[1]{%
	Stefan Ruzika
	\orcidlink{0000-0002-3230-0900}%
}
\affil[1]{RPTU University Kaiserslautern-Landau, Kaiserslautern, Germany}
\date{}
\begin{document}

\maketitle

\begin{abstract}
	Generally, multi-objective optimisation problems are solved exactly or approximated by solving a series of scalarisations, for example by dichotomic search.
	In this paper, we take a different approach and attempt to compute the set of all extreme-supported non-dominated points of a bi-objective combinatorial optimisation problem by using a neighbourhood-based approach.
	Whether or not this works depends on the definition of adjacency and we provide sufficient conditions that guarantee its success.
	The resulting generic algorithm is an alternative to dichotomic search in our setting.
	
	We then apply our generic algorithm to a specific example:
	the bi-objective minimum weight basis problem, in which we are given a matroid and want to find bases of minimum weight.
	We use the natural definition of adjacency, in which two bases are adjacent if they differ in exactly one element.
	Since this satisfies our sufficient condition on the adjacency relation, our generic algorithm works in this case and we analyse its running time, showing that it is polynomial.
	By tailoring this algorithm specifically to matroids, we obtain one that is faster but no longer transitions between adjacent solutions, instead swapping directly from one extreme-supported point to the next in a combinatorial fashion.
	
	\smallskip
	\noindent\textbf{Keywords:} multi-objective optimization, matroids, adjacency
\end{abstract}

\section{Introduction}
\label{sec:intro}

In multi-objective optimisation, problems have multiple conflicting objectives and, thus, they rarely have a single optimal solution.
Instead they have several, potentially very many, solutions whose objective values are not naturally comparable.
Since the corresponding single-objective problems are usually far better studied, scalarisation is the typical approach for solving such problems.
Scalarisation methods take a multi-objective problem and systematically turn it into a single-objective one, which potentially has some parameters or additional constraints.
For surveys of scalarisation in multi-objective optimisation, we refer to \cite{Jah85,MM02,EW05}.

The simplest and most common scalarisation method, and the one we use here, is the weighted-sum scalarisation method, in which each objective is given a weight and these are added up while the feasible set remains unchanged~\cite[Chapter~3]{Ehr05}.
This method typically cannot compute all non-dominated points~$\outcomeset_N$, but those on the boundary of the convex hull of the outcome set: 
the supported non-dominated points~$\outcomeset_{SN}$.
Of these, the extreme points~$\outcomeset_{ESN}$ are of special interest since
\begin{itemize}
	\item they provide an optimal point for every weighted-sum scalarisation problem~\cite{Bac26,PGE10}.
	\item They form a $\Set{(2,1),(1,2)}$-approximation of the set of all non-dominated points~\cite{Bac26,BRTV21}, that is, for every non-dominated point, there is an extreme-supported non-dominated point that is at least as good in one objective and off by at most a factor of 2 in the other.
	\item Based on an empirical study~\cite{Say24}, they provide a good representation of the entire non-dominated set, based on their coverage error and hypervolume ratio.
\end{itemize}

Plenty of algorithms exist that compute the set $\outcomeset_{ESN}$, which we call \emph{\esa s}, most of which concern multi-objective (integer) linear programs.
The most common and general one for two objectives is dichotomic search~\cite{AN79}, which proceeds by solving weighted sum scalarisation problems for certain weights that depend on the previously found images.
An extension of this to more than two objectives can be found in \cite{PGE10}, which solves the higher-dimensional cases by recursively reducing them to the bi-objective case.
Note that our algorithm (and any other one for the bi-objective case) can be inserted into this recursive procedure.
For further examples, see~\cite{Ben98,BS02,OK10,ELS11,HDPR20,BPST24}.

All of these algorithms operate in the objective space to obtain their solutions.
On the other hand, methods that operate in the decision space are more rare and often less efficient due to the fact that the decision space usually has a higher dimension than the objective space.
One prominent example of a decision space method is the parametric Simplex algorithm~\cite{SG54,Ehr05}, which transitions between adjacent basic solutions to find $\outcomeset_{ESN}$.

We want to use definitions of adjacency in this paper to develop an adjacency-based \esa.
While this has not been done generically, concepts of adjacency have been studied before.
Note that we focus on discrete problems, so we are not concerned with topological connectivity, for which results exist as well, see \cite{Nac78} as an example, which is applicable to multi-objective linear programs.

In combinatorial problems, connectivity is usually defined with respect to an adjacency graph.
The vertices of this graph are the feasible solutions and two vertices are connected by an edge if the two solutions are adjacent, for whatever the definition of adjacency is.
These graphs are still typically connected, but for algorithmic purposes, it would be desirable for the efficient solutions (so the subgraph induced by them) to be connected, which is rarely the case.
To be precise, it is not the case for the minimum spanning tree problem and the shortest path problem by \cite{EK97}, nor for the bi-objective integer minimum cost flow problem~\cite{PGE06}.
The efficient solutions of the multi-objective knapsack, travelling salesman, and linear assignment problems are not connected either, except for certain special cases~\cite{dSCF04,Gor10,GKR11}.

After all these negative results, there is some good news:
the supported efficient solutions are often connected.
This is true for basic solutions in linear programming (where they are equal to the basic efficient solutions)~\cite{Ise77} but also for minimum bases matroids~\cite{Ehr96}.

There is no reason to assume that the extreme-supported efficient solutions are connected however and, indeed, they are not for the minimum spanning tree problem, as we shall see.
But, since the supported efficient solutions typically are, we shall develop an algorithm for bi-objective combinatorial problems that generically computes $\outcomeset_{ESN}$ while only visiting points that are supported.

The basic idea is similar to the combinatorial argument used in \cite{Ehr96} to show that the supported efficient bases of a matroid are efficient for the natural definition of adjacency, where two bases are adjacent if they differ in exactly one element.
We start with a lexicographically optimal solution, which is optimal for the second objective and, among those, for the first.
Then, we look at the solutions \enquote{to the left}, so ones with better first objective and transition to one with the flattest slope.
In this way, we traverse the Pareto front and do not skip the extreme-supported non-dominated points.

This is computationally possible, since our feasible set is finite, but very expensive.
To improve this, we do not check all solutions that are \enquote{to the left} of our current one, but just those that are also adjacent.
This limits the solutions we need to check to those in the neighbourhood, but may also cause the result to be incorrect.
We provide sufficient conditions on the adjacency relation that ensure that we do not make mistakes by not considering some solutions.
We also strengthen the criterion slightly to obtain a sufficient criterion for the connectedness of the supported efficient solutions, which can be used to obtain the corresponding result in~\cite{Ehr96}, for example.

Consequently, we obtain a framework which takes an adjacency relation and, if this relation satisfies our sufficient criterion, yields an \esa{} for the problem.
In particular, if we use adjacent feasible solutions to define the adjacency relation, our framework yields the parametric Simplex algorithm.

Afterwards, we look at the bi-objective problem of computing a minimum weight basis of a matroid, which is a large problem class and contains, for example, the minimum spanning tree problem.
We use the adjacency definition mentioned earlier and verify that it satisfies our sufficient condition, yielding a working algorithm.
We prove that it runs in polynomial time, despite the fact that there may be exponentially many supported efficient solutions.

Finally, we use ideas from parametric optimisation to obtain an algorithm tailored to this problem class.
We give up on transitioning between adjacent solutions and instead show how we can directly traverse the extreme-supported non-dominated points in the order \enquote{from bottom right to top left}.
We compare this theoretically to dichotomic search, for which our algorithms are substitutes.

\paragraph{Contributions and outline.}
After covering preliminary definitions and results in \cref{sec:prelims}, we develop a generic \esa{} for bi-objective combinatorial problems in \cref{sec:algorithm}, which can be seen as an alternative to dichotomic search for this problem class.
We show how to incorporate a notion of adjacency in order to make it computationally feasible and obtain the parametric Simplex algorithm as a special case. 
Additionally, we provide sufficient conditions on the adjacency relation that guarantee that the algorithm works and that the supported efficient solutions are connected.
In \cref{sec:matroid-alg}, we look at the bi-objective minimum weight basis problem for matroids and apply our framework to it.
We prove that the resulting \esa{} runs in polynomial time before we improve it in \cref{sec:tailored-alg}, where we also compare it to dichotomic search.
\section{Preliminaries}
\label{sec:prelims}

In this section, we introduce the basic concepts we need for the remainder of this paper.
We start by defining notions of multi-objective optimisation, then we introduce matroids and some of their properties, and finally, we look at what adjacency means in the context of multi-objective optimisation in general and for matroids in particular.

\subsection{Multi-objective optimisation}
A \emph{combinatorial bi-objective optimisation problem} $\problemp$ is of the following form:
\begin{mini*}{}{\objective{x}=(\objective[1]{x},\objective[2]{x})\T}{}{}\tag{\problemp}
    \addConstraint{x}{\in \feasibleset} 
\end{mini*}
where $\objective[1]{},\objective[2]{}\colon \feasibleset \rightarrow \RR$ are the objective functions and $\feasibleset$ is finite.
Let $\outcomeset=\objective{\feasibleset}$ be the outcome set.

In this paper, relations like $\leq$ and $<$ are always interpreted component-wise and we write $y \lneq y'$ if $y\leq y'$ but $y\neq y'$.
We use the Pareto-concept of optimality, in which a point $y$ is called \emph{dominated} by $y'$ if and only if $y' \lneq y$. 
A point $y\in \outcomeset$ which is not dominated by any other point is called a \emph{non-dominated} point and the set of non-dominated points is denoted by $\outcomeset_N$.  
We define $\feasibleset_E \coloneqq \Set{ x \in \feasibleset : \objective{x}\in\outcomeset_N }$ as the set of efficient solutions.

A typical task in a multi-objective optimisation problem is to find all non-dominated points in the outcome set $\outcomeset$ and to every non-dominated point $y$ at least one efficient solution $x \in \feasibleset_E$ with $\objective{x} = y$.
This is typically done by the $\varepsilon$-constraint approach or by Tchebycheff scalarisation, which transfer the multi-objective optimisation problem to a single objective one.
Another typical scalarisation, that cannot yield all non-dominated points in general, is \emph{weighted-sum problems}, which minimises $w\T\objective{x}$.
By normalising $w$, we can always assume that $w_1 + w_2 = 1$ and it suffices to specify one component, say $w_1 = \lambda$ (and $w_2=1-\lambda$).
Thus, for $\lambda \in [0,1]$, we define $\problempws{\lambda}$ by
\begin{mini*}{}{\lambda\objective[1]{x} + (1-\lambda)\objective[2]{x}}{}{}\tag{\problempws{\lambda}}
    \addConstraint{x}{\in \feasibleset}{.}
\end{mini*}
We write $\feasibleset_\lambda$ for all optimal solutions of $\problempws{\lambda}$.

A solution $x\in \feasibleset_\lambda$, for $\lambda\in(0,1)$, is a \emph{supported efficient solution} and its image $y$ is a \emph{supported non-dominated point}.
If $y$ is an extreme point of $\conv{Y}+\RR_{\geq 0}$, then $y$ is an \emph{extreme-supported non-dominated point} and $x$ an \emph{extreme-supported efficient solution}.
We write $\feasibleset_{SE}$, $\feasibleset_{ESE}$, $\outcomeset_{SN}$, and $\outcomeset_{ESN}$ for the sets of such solutions and points.
The solutions $x\in \feasibleset_{SE}$ are efficient and $y\in\outcomeset_{SN}$ are non-dominated~\cite[Theorem~3.6]{Ehr05}.

We note that several different definitions of supported efficient solutions and supported non-dominated points are used in the literature.
For an overview we refer to~\cite{Chl25,KS25}, which show that they all coincide in our setting of bi-objective combinatorial problems.

Let us look at a small but illustrative example.
\begin{example}
	\label{ex:moo-problem-ST}
	Let $G$ be the graph on five vertices shown in \cref{fig:moo-problem-ST-ex}.
	We want to find a minimum spanning tree with respect to the costs depicted therein.
	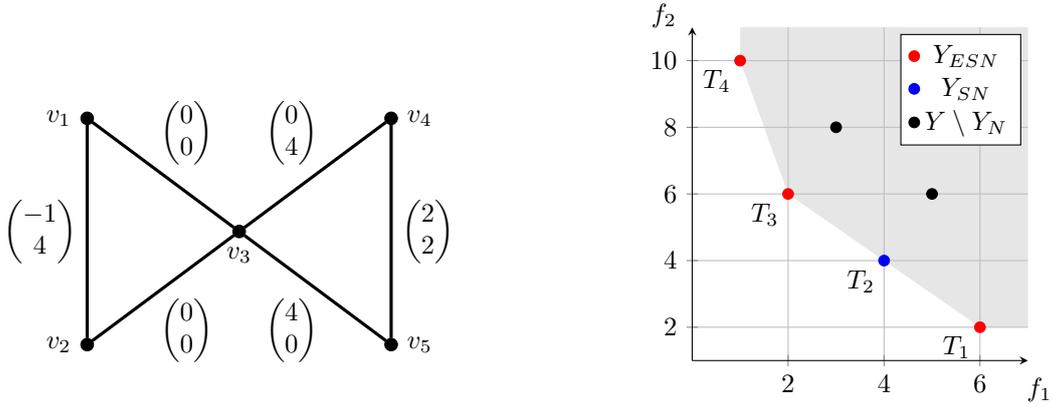
\begin{figure}[tb]
		\centering
		\subcaptionbox{An example graph for the bi-objective minimum spanning tree problem.
		\label{fig:moo-problem-ST-ex}}[.475\textwidth]
		{
			\begin{tikzpicture}
				\node[node,label=left:$v_1$] (1) at (0, 1.5) {};
				\node[node,label=left:$v_2$] (2) at (0, -1.5) {};
				\node[node,label=below:$v_3$] (3) at (2, 0) {};
				\node[node,label=right:$v_4$]  (4) at (4, 1.5) {};
				\node[node,label=right:$v_5$]  (5) at (4, -1.5) {};
				
				\node (dummy) at (0,-2.25) {};
				
				\draw[edge] (1) to node[midway, left] {$\vect{-1\\4}$} (2);
				\draw[edge] (1) to node[midway, above right,xshift=-5pt] {$\vect{0\\0}$} (3);
				\draw[edge] (2) to node[midway, below right,xshift=-5pt] {$\vect{0\\0}$} (3);
				\draw[edge] (3) to node[midway, above left,xshift=5pt] {$\vect{0\\4}$} (4);
				\draw[edge] (3) to node[midway, below left,xshift=5pt] {$\vect{4\\0}$} (5);
				\draw[edge] (4) to node[midway, right] {$\vect{2\\2}$} (5);
			\end{tikzpicture}
		}
		\hfill
		\subcaptionbox{The costs of all spanning trees.
		The grey area is $\conv{\outcomeset}+\RR_{\geq 0}$.
		\label{fig:moo-problem-ST-costs}}[.475\textwidth]
		{
			\begin{tikzpicture}
				\begin{axis}[
					axis lines=middle,    
					xlabel={$\objective[1]{}$},
					x label style={at={(axis description cs:1.1,-0.15)}},
					ylabel={$\objective[2]{}$},
					y label style={at={(axis description cs:-0.15,1.1)}},
					xmin=0, xmax = 7,     
					ymin=1, ymax = 11,   
					grid=major,     
					width=6cm, height=6cm
					]
					\addplot [only marks, red] coordinates {
						(1,10)
						(2,6)
						(6,2)
					};
					\addlegendentry{$\outcomeset_{ESN}$}
					\coordinate[label = below left:{$T_1$}] (t1) at (6,2);
					\coordinate[label = below left:{$T_3$}] (t3) at (2,6);
					\coordinate[label = below left:{$T_4$}] (t4) at (1,10);
					
					\addplot [only marks, blue] coordinates {
						(4,4)
					};
					\coordinate[label = below left:{$T_2$}] (t2) at (4,4);
					\addlegendentry{$\outcomeset_{SN}$}
					
					\addplot [only marks] coordinates {
						(5,6)
						(3,8)
					};
					\addlegendentry{$Y\setminus \outcomeset_N$}
					
					\begin{scope}[on background layer]
						\addplot[fill=black!10,draw=none] coordinates {(2,6) (1,10) (1,11) (7,11) (7,2) (6,2)} --cycle;
					\end{scope}
				\end{axis}
			\end{tikzpicture}
		}
		\caption{An example bi-objective minimum spanning tree problem.}
	\end{figure}
	
	This graph has nine spanning trees and their corresponding costs are shown in \cref{fig:moo-problem-ST-costs}.
	The efficient spanning trees correspond to the red and blue dots.
	The trees $T_1$, $T_2$, and $T_3$ all contain the edges $v_1v_3$ and $v_2v_3$ as well as the edges $v_3v_5$ and $v_4v_5$, $v_3v_4$ and $v_3v_5$, or $v_3v_4$ and $v_4v_5$.
	The tree $T_4$ uses the edges $v_1v_2$, $v_1v_3$, $v_3v_4$, and $v_4v_5$, though the same point is attained by the tree $T_4'\coloneqq T_4 - v_1v_3 + v_2v_3$.
	The remaining trees $T_5$, $T_5'$ and $T_6$, $T_6'$ have objective values $\vect{5,6}\T$ and $\vect{3,8}\T$, which are dominated by the images of $T_2$ or $T_3$.
\end{example}

We also need the definition of the weight set decomposition (with respect to the weighted sum scalarisation), which we shall briefly recall.
We denote the weight set $[0,1]$ by $\Lambda$ and write $\Lambda(x)$ for the weight set component of $x$, that is, $\Lambda(x) = \Set{\lambda\in\Lambda : x \in \feasibleset_{\lambda}}$.
We regularly use the following result concerning the weight set, for which we refer to \cite[Theorem~3.14]{Bac26} (which is based on \cite{PGE10}).
\begin{lemma}
	\label{prelims-ese-multiple-weights}
	The weight set component $\Lambda(x)$ of $x\in \feasibleset$ consists of more than a single weight if and only if $x\in\feasibleset_{ESE}$.
\end{lemma}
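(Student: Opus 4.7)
The plan is to translate the claim into the geometry of the polyhedron $P \coloneqq \conv{\outcomeset} + \RR_{\geq 0}^2$. Since $\feasibleset$, and hence $\outcomeset = \objective{\feasibleset}$, is finite, $P$ is a polyhedron in the plane whose vertices are exactly the points of $\outcomeset_{ESN}$. For every $\lambda \in [0,1]$, the linear functional $c_\lambda(z) \coloneqq \lambda z_1 + (1-\lambda) z_2$ is monotone non-decreasing on $\RR_{\geq 0}^2$, so minimising $c_\lambda$ over $P$ is equivalent to minimising it over $\outcomeset$. Consequently, $x \in \feasibleset_\lambda$ if and only if $y \coloneqq \objective{x}$ lies in the $\lambda$-minimising face $F_\lambda$ of $P$, and thus $\Lambda(x) = \Set{\lambda \in [0,1] : y \in F_\lambda}$.

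I would first note that $\Lambda(x)$ is convex: a convex combination of the two optimality inequalities $c_{\lambda_1}(y) \leq c_{\lambda_1}(\objective{x'})$ and $c_{\lambda_2}(y) \leq c_{\lambda_2}(\objective{x'})$ immediately yields optimality at any convex combination of $\lambda_1$ and $\lambda_2$. For the implication $x \in \feasibleset_{ESE} \Rightarrow \abs{\Lambda(x)} > 1$, the image $y$ is a vertex of $P$, so its normal cone is two-dimensional (using that $P$ has non-empty interior in $\RR^2$). Its intersection with the line $\Set{(\lambda, 1-\lambda) : \lambda \in [0,1]}$ is therefore a closed interval of positive length, which exhibits more than one weight in $\Lambda(x)$.

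For the converse, assume $\lambda_1 < \lambda_2$ both belong to $\Lambda(x)$. Then $y$ lies on the boundary of $P$ (as it belongs to a proper face $F_{\lambda_1}$), so if $y$ were not a vertex, it would be in the relative interior of some edge $e$ of $P$, making $e$ the unique smallest face containing $y$. Such an edge serves as the minimising face $F_\lambda$ for at most one $\lambda \in [0,1]$, namely the one proportional to its inward normal (for the two unbounded edges parallel to a coordinate axis this $\lambda$ equals $0$ or $1$, respectively), which contradicts $\Set{\lambda_1, \lambda_2} \subseteq \Lambda(x)$. Hence $y$ is a vertex and $x \in \feasibleset_{ESE}$. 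I do not expect a substantial obstacle here; the argument is essentially bookkeeping with planar polyhedral faces, and the main subtlety worth spelling out is the uniform treatment of the two unbounded edges alongside the bounded ones, which the normal-cone viewpoint handles automatically.
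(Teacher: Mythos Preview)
Your argument is correct. The paper does not give its own proof of this lemma; it simply cites \cite[Theorem~3.14]{Bac26} (based on \cite{PGE10}) for the result. Your normal-cone argument is the standard way to obtain this fact directly in the planar setting and is essentially what underlies the cited references, so there is no meaningful difference in approach to compare.
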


\subsection{Matroids and the Greedy algorithm}
Matroids are a generalisation of spanning trees and are the structure in which the Greedy algorithm is optimal.
For the basics of matroids, we refer to \cite{Oxl11}, but we briefly state the definitions and properties we need here.
\begin{definition}
	\label{def:matroid}
	A \emph{matroid} $M$ is an ordered pair $(E, \calI)$, where $E$ is a finite set and $\calI \subseteq 2^E$ satisfies the following three properties: 
	\begin{enumprop}
	    \item $\emptyset\in\calI$.
	    \item If $I \in \calI$ and $I' \subseteq I$, then $I' \in \calI$.
	    \item If $I_1,\, I_2 \in \calI$ and $\abs{I_1} < \abs{I_2}$, then there exists an element $e \in I_2 \setminus I_1$ such that $I_1 \cup \Set{e} \in \calI$.\label{prop:prelim-matroid-exchange}
	\end{enumprop}
\end{definition}
The elements of $\calI$ are called \emph{independent sets} of $M$, while all other subsets of $E$ are called \emph{dependent}. 
A \emph{basis} of $M$ is a maximal independent set of $M$ and a \emph{circuit} is a minimal dependent set.
We denote the set of all bases of $M$ by~$\calB(M)$ and the set of all circuits by~$\calC(M)$.
All bases have the same cardinality which is called the \emph{rank} $\rank(M)$ of $M$.
\begin{example}
    Let $E$ be the edge set of a connected graph $G$ and let $\calF$ be the set of all acyclic subsets $F \subseteq E$. 
    Then $M = (E, \calF)$ is called a \emph{graphic matroid}.
    The bases of $M$ are the spanning trees, the circuits are cycles, and the rank of $M$ is $n-1$ where $n$ is the order of the graph $G$.
\end{example}

For some single-objective cost function $c\colon E \rightarrow \RR$, we can find a basis with minimal cost by applying the Greedy algorithm (\cref{alg:Greedy}).
It performs $m$ independence tests, the complexity of which may differ from matroid to matroid.
Thus, we denote the time to perform an independence check in $M$ by $\tindep$ and obtain a running time of $\O{m\cdot(\log m + \tindep)}$.
\begin{algorithm}[tb]
	Sort the elements of $E=\Set{e_1, e_2, \ldots, e_m}$ such that $c(e_1) \leq c(e_2) \leq \ldots \leq c(e_m)$\;
	Initialize $I \algass \emptyset$\;
	\For{$i=1,\ldots,m$}
	{
		\If{$I \cup \Set{e_i} \in \calI$}
		{
			$I \algass I \cup \Set{e_i}$\;
		}
	}
	\Return{$I$}
	\caption{The Greedy algorithm for a matroid $M$.}
	\label{alg:Greedy}
\end{algorithm}

That this algorithm is correct is a key property of matroids.
In fact, the Greedy algorithm is correct for every choice of costs $c$ if and only if the $M$ is a matroid~\cite[Theorem~1.8.5]{Oxl11}.

The following strong basis exchange property will be helpful for us later.
\begin{lemmacite}[{{\cite[Theorem~2]{Bru69}}}]
	\label{prelims-matroid-bases-swap-elements}
	Let $B$ and $B'$ be bases of $M$.
	For every $e\in B$ there exists an $e'\in B'$ such that $(B-e)+e'$ and $(B'-e')+e$ are bases of $M$.
\end{lemmacite}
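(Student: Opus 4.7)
The strategy is to pin down a candidate set of elements that work for each of the two required basis properties and then show that the two candidate sets intersect beyond the trivial element $e$ itself. Concretely, I would run a fundamental-circuit argument, combined with matroid-rank submodularity (which, in matroid-theoretic language, corresponds to the classical orthogonality of a circuit and a cocircuit).

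If $e \in B \cap B'$, the choice $e' = e$ trivially satisfies both requirements, so assume $e \in B \setminus B'$. Since $\lvert B' + e\rvert = \rank(M) + 1$ and $B'$ is independent, $B' + e$ is dependent and every circuit it contains must include $e$. It is standard that exactly one such circuit exists; call it $C \in \calC(M)$ (the fundamental circuit of $e$ with respect to $B'$), and note $C - e \subseteq B'$. For every $e' \in C - e$, the set $(B' - e') + e$ has cardinality $\rank(M)$ and is independent, since any circuit inside it would either lie in $B' - e'$ (impossible, as $B'$ is independent) or equal $C$ (impossible, as $e' \in C$ has been removed). Hence $(B' - e') + e \in \calB(M)$ for every $e' \in C - e$, which identifies $C - e$ as the admissible set for the $B'$-side swap.

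It remains to pick $e' \in C - e$ for which $(B - e) + e'$ is independent, as a cardinality count then makes it a basis. Suppose for contradiction that this fails for every $e' \in C - e$; then $\rank((B - e) + e') = \rank(B - e) = \rank(M) - 1$ for each such $e'$, covering both the case $e' \in B - e$ and the case that $(B - e) + e'$ is dependent. By submodularity of the matroid rank function, the property \enquote{adding $x$ to $B - e$ leaves the rank unchanged} is closed under taking unions of such $x$'s, so one may add all elements of $C - e$ one by one without increasing the rank, yielding $\rank\bigl((B - e) \cup (C - e)\bigr) = \rank(M) - 1$. Since $e \in C$, this set contains $B$, giving $\rank(B) \le \rank(M) - 1$ and contradicting $B \in \calB(M)$. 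Hence some $e' \in C - e$ works for both swaps simultaneously.

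The main obstacle is this final step, namely lifting an element-wise rank bound to a collective one. In matroid-theoretic language, this is exactly the orthogonality principle that a circuit and a cocircuit cannot meet in a single element, applied to $C$ and the fundamental cocircuit of $e$ with respect to $B$; the submodularity-based phrasing above is a self-contained way of unfolding that fact without formally introducing cocircuits.
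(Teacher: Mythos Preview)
The paper does not prove this statement; it is quoted from \cite[Theorem~2]{Bru69} without proof, so there is nothing to compare against on that front. Your approach is the standard one and is essentially correct, but there is a slip in the very last step that you should fix.

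You write: \enquote{yielding $\rank\bigl((B - e) \cup (C - e)\bigr) = \rank(M) - 1$. Since $e \in C$, this set contains $B$.} That is false: $e$ has been removed from both $B$ and $C$, so $(B-e)\cup(C-e)$ does \emph{not} contain $e$ and hence does not contain $B$. The repair is one more line of the same kind: because $C$ is a circuit, $e$ lies in the closure of $C-e$, hence in the closure of $(B-e)\cup(C-e)$; therefore adding $e$ to that set still leaves the rank at $\rank(M)-1$. Now $(B-e)\cup C$ genuinely contains $B$, and the contradiction goes through. Equivalently, in the orthogonality language you mention at the end: you have shown $C\cap D^*\supseteq\{e\}$ for the fundamental cocircuit $D^*$ of $e$ with respect to $B$, and circuit--cocircuit orthogonality forces $\lvert C\cap D^*\rvert\ge 2$; the missing line is exactly the place where you implicitly use that $e$ depends on $C-e$.
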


Our main application in this paper will be the \emph{bi-objective minimum weight basis problem} (\bmwb), which is defined as follows:
\begin{mini*}{}{\objective{B}=(\objective[1]{B},\objective[2]{B})\T}{}{}\tag{\bmwb}
    \addConstraint{B}{\in \mathcal{B}(M)} 
\end{mini*}
where, for $i\in\Set{1,2}$, $\objective[i]{B} = \sum_{e \in B}c_i(e)$ with $c\colon E\to\RR^2$.
Note that we can solve the weighted-sum problem $\problempws{\lambda}$ in this case by applying the Greedy algorithm with weights $c_\lambda$, where $c_\lambda(e) \coloneqq \lambda c_1(e) + (1-\lambda)c_2(e)$ for every $\lambda\in\Lambda$.

\subsection{Definitions and results concerning adjacency}
Finally, we want to define what it means for certain solutions of our combinatorial optimisation problem $\problemp$ to be connected.
To do so, we need a notion of adjacency between different solutions, which is just a binary relation.
\begin{definition}
	Let $\adjrel$ be a binary relation on \feasibleset.
	A feasible solution $x\in\feasibleset$ is \emph{adjacent} to a feasible solution $x'\in\feasibleset$ if $(x,x')\in\adjrel$.
	
	The \emph{adjacency graph} of \adjrel{} is the digraph $\adjgraph$ with vertex set $X$ and arc set~\adjrel.
	We treat $\adjgraph$ as an undirected graph if $\adjrel$ is symmetric.
\end{definition}

Now, a set of solutions $\feasibleset'\subseteq\feasibleset$ is (weakly or strongly) connected if $\adjgraph[\feasibleset']$ is.
To shorten notation, we write $D_E$, $D_{SE}$, and $D_{ESE}$ for $\adjgraph[\feasibleset']$ if $\feasibleset'$ is $\feasibleset_E$, $\feasibleset_{SE}$, and $\feasibleset_{ESE}$.

As an example, for matroids, we use the natural definition of adjacency where two bases are adjacent if they differ in exactly one element.
\begin{definition}
	\label{def:adj-matroids}
	Let $M$ be a matroid and $\adjrel_M$ be the relation on $\calB(M)$ containing all pairs of bases $B,\, B'$ such that $\abs{B\setminus B'} = 1$.
\end{definition}
This is a symmetric relation, so we can regard the adjacency graph $D\coloneqq D_{\calR_M}$ as undirected.
For graphic matroids, this just means that adjacent spanning trees differ in exactly one edge.
If we recall \cref{ex:moo-problem-ST}, the adjacency graph for that problem is shown in \cref{fig:moo-problem-ST-adjacency}.
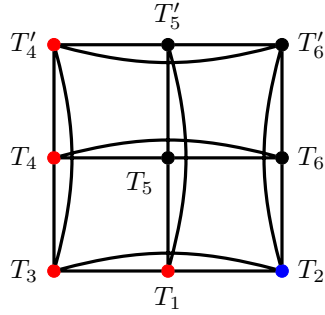
\begin{figure}[tb]
	\centering
	\begin{tikzpicture}
		\node[node,label=below: $T_1$,red]  (1)  at (1.5, 0  ) {};
		\node[node,label=right: $T_2$,blue] (2)  at (3,   0  ) {};
		\node[node,label=left:  $T_3$,red]  (3)  at (0,   0  ) {};
		\node[node,label=left:  $T_4$,red]  (4)  at (0,   1.5) {};
		\node[node,label=left:  $T_4'$,red] (4p) at (0,   3  ) {};
		\node[node,label=below left: $T_5$] (5)  at (1.5, 1.5) {};
		\node[node,label=above: $T_5'$]     (5p) at (1.5, 3  ) {};
		\node[node,label=right: $T_6$]      (6)  at (3,   1.5) {};
		\node[node,label=right: $T_6'$]     (6p) at (3,   3  ) {};
		
		\draw[edge] (1) to (2)
					(1) to (3)
					(1) to (5)
					(1) to[bend right=15] (5p)
					(2) to[bend right=15] (3)
					(2) to (6)
					(2) to[bend left=15] (6p)
					(3) to (4)
					(3) to[bend right=15] (4p)
					(4) to (4p)
					(4) to (5)
					(4) to[bend left=15] (6)
					(5) to (5p)
					(5) to (6)
					(6) to (6p)
					(4p) to (5p)
					(4p) to[bend right=15] (6p)
					(5p) to (6p);
	\end{tikzpicture}
	\caption{The adjacency graph of the MST problem from \cref{ex:moo-problem-ST}.}
	\label{fig:moo-problem-ST-adjacency}
\end{figure}

Several results concerning the connectivity of certain subgraphs of $D$ are known, though not many of them are positive.
By the exchange property (\cref{prop:prelim-matroid-exchange} in \cref{def:matroid}), we can transform every basis $B$ into every other basis $B'$ by iteratively exchanging single elements, which yields that $D$ is connected.
But these exchanges could lead to intermediate bases that are not efficient for $\problemp$, indeed, $D_E$ need not be~\cite{EK97}.

\Textcite{Ehr96} showed that $D_{SE}$ is connected and we shall generalise this proof and obtain it again as a special case of our result.
In addition, we shall develop an adjacency-based algorithm that computes $\outcomeset_{ESN}$ that works in the case of matroids.
However, the following example shows that $D_{ESE}$ is not connected.
\begin{example}
	\label{prelim-ex-ese-not-connected}
	Regard the graph from \cref{fig:moo-problem-ST-ex}, where we replace the costs of the left triangle by the same costs as on the right.
	By doing so, we obtain the objective values shown in \cref{fig:moo-problem-ST-costs-mod}.
	Note that the missing trees have the following values: $\objective{T_4'}=\objective{T_5}=\objective{T_2}$, $\objective{T_6'}=\objective{T_1}$, and $\objective{T_6}=\objective{T_3}$.
	
	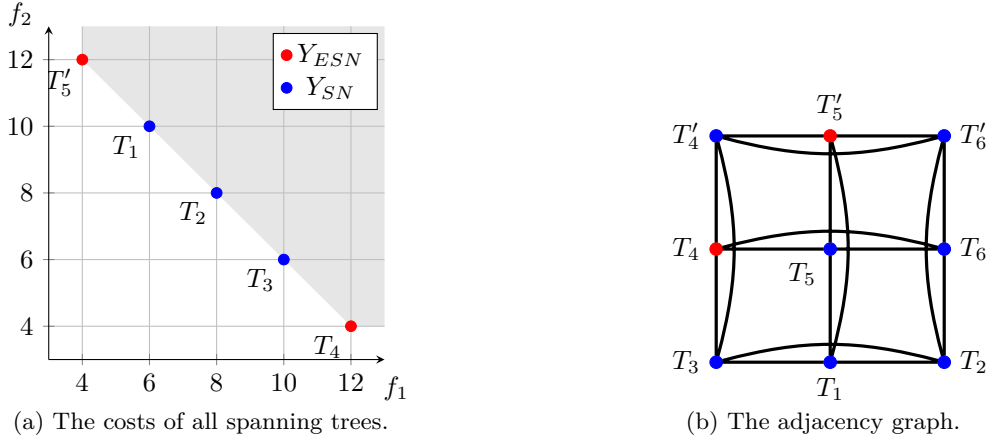
\begin{figure}[tb]
		\centering
		\subcaptionbox{The costs of all spanning trees.
		\label{fig:moo-problem-ST-costs-mod}}[.475\textwidth]
		{
			\begin{tikzpicture}
				\begin{axis}[
					axis lines=middle,    
					xlabel={$\objective[1]{}$},
					x label style={at={(axis description cs:1.1,-0.15)}},
					ylabel={$\objective[2]{}$},
					y label style={at={(axis description cs:-0.15,1.1)}},
					xmin=3, xmax = 13,     
					ymin=3, ymax = 13,   
					grid=major,     
					width=6cm, height=6cm
					]
					\addplot [only marks, red] coordinates {
						(12,4)
						(4,12)
					};
					\addlegendentry{$\outcomeset_{ESN}$}
					\coordinate[label = below left:{$T_4$}] (t4) at (12,4);
					\coordinate[label = below left:{$T_5'$}] (t5p) at (4,12);
					
					\addplot [only marks, blue] coordinates {
						(8,8)
						(10,6)
						(6,10)
					};
					\coordinate[label = below left:{$T_2$}] (t2) at (8,8);
					\coordinate[label = below left:{$T_3$}] (t3) at (10,6);
					\coordinate[label = below left:{$T_1$}] (t1) at (6,10);
					\addlegendentry{$\outcomeset_{SN}$}

					\begin{scope}[on background layer]
						\addplot[fill=black!10,draw=none] coordinates {(12,4) (4,12) (4,13) (13,13) (13,4)} --cycle;
					\end{scope}
				\end{axis}
			\end{tikzpicture}
		}
		\hfill
		\subcaptionbox{The adjacency graph.
		\label{fig:moo-problem-ST-adjacency-mod}}[.475\textwidth]
		{
			\begin{tikzpicture}
				\node[node,label=below: $T_1$,blue]  	 (1)  at (1.5, 0  ) {};
				\node[node,label=right: $T_2$,blue] 	 (2)  at (3,   0  ) {};
				\node[node,label=left:  $T_3$,blue]  	 (3)  at (0,   0  ) {};
				\node[node,label=left:  $T_4$,red]  	 (4)  at (0,   1.5) {};
				\node[node,label=left:  $T_4'$,blue] 	 (4p) at (0,   3  ) {};
				\node[node,label=below left: $T_5$,blue] (5)  at (1.5, 1.5) {};
				\node[node,label=above: $T_5'$,red]      (5p) at (1.5, 3  ) {};
				\node[node,label=right: $T_6$,blue]      (6)  at (3,   1.5) {};
				\node[node,label=right: $T_6'$,blue]     (6p) at (3,   3  ) {};
				
				\draw[edge] (1) to (2)
				(1) to (3)
				(1) to (5)
				(1) to[bend right=15] (5p)
				(2) to[bend right=15] (3)
				(2) to (6)
				(2) to[bend left=15] (6p)
				(3) to (4)
				(3) to[bend right=15] (4p)
				(4) to (4p)
				(4) to (5)
				(4) to[bend left=15] (6)
				(5) to (5p)
				(5) to (6)
				(6) to (6p)
				(4p) to (5p)
				(4p) to[bend right=15] (6p)
				(5p) to (6p);
			\end{tikzpicture}
		}
		\caption{The costs and adjacency graph of the spanning trees in \cref{prelim-ex-ese-not-connected}.}
	\end{figure}
	Here we see that all these points lie on one line and the extreme-supported efficient solutions are exactly the trees $T_4$ and $T_5'$, which are not adjacent, see~\cref{fig:moo-problem-ST-adjacency}. 
\end{example}

As the example shows, an algorithm that computes $\outcomeset_{ESN}$ while moving between adjacent solutions in $\feasibleset_{ESE}$ cannot exist.
Thus, we shall need to allow our algorithm to visit intermediate solutions that are not in $\feasibleset_{ESE}$, but we shall make sure that it does not leave $\feasibleset_{SE}$.
Therefore, we call an algorithm that computes a set $S\subseteq\feasibleset_{ESE}$ such that $\objective{S} = \outcomeset_{ESN}$ an \emph{\esa}.

Do note, however, that $\feasibleset_{SE}$ can contain exponentially many elements:
if we look at the graphic matroid for the complete graph whose edges all have the same cost, then all $n^{n-2}$ spanning trees~\cite{AZ18} are optimal bases with respect to every weight vector.
This result can be extended to an example where $\outcomeset_N$ is exponentially large as well, see \cite[Theorem~4.2]{HR94}.
\section{A generic \esa}
\label{sec:algorithm}

The goal of this section is to develop a generic adjacency-based \esa{} for a combinatorial bi-objective problem \problemp.
Our algorithm starts with a lexicographically optimal solution $x^0$ with respect to the ordering $(\objective[2]{},\objective[1]{})$, which is extreme-supported efficient~\cite[Lemma~2.2]{Bac26}.
\begin{figure}
	\centering
	\begin{tikzpicture}
		\begin{axis}[
			axis lines=middle,    
			xlabel={$\objective[1]{}$},
			x label style={at={(axis description cs:1.1,-0.15)}},
			ylabel={$\objective[2]{}$},
			y label style={at={(axis description cs:-0.15,1.1)}},
			xmin=0, xmax = 7,     
			ymin=1, ymax = 11,   
			grid=major,     
			width=6cm, height=6cm
			]
			\addplot [only marks] coordinates {
				(1,10)
				(2,6)
				(6,2)
			};
			\coordinate[label = above right:{$T_1$}] (t1) at (6,2);
			\coordinate[label = above right:{$T_3$}] (t3) at (2,6);
			\coordinate[label = right:{\;$T_4$}] (t4) at (1,10);
			
			\addplot [only marks] coordinates {
				(4,4)
			};
			\coordinate[label = above right:{$T_2$}] (t2) at (4,4);
			
			\addplot [only marks] coordinates {
				(5,6)
				(3,8)
			};
			
			\begin{scope}[on background layer]
				\addplot[fill=black!10,draw=none] coordinates {(2,6) (1,10) (1,11) (7,11) (7,2) (6,2)} --cycle;
			\end{scope}
			
			\draw[arc] (axis cs:6,2) to[bend left] (axis cs:2,6);
			\draw[arc, dashed] (axis cs:6,2) to[bend right] (axis cs:4,4);
			\draw[arc, dashed] (axis cs:4,4) to[bend right] (axis cs:2,6);
			\draw[arc] (axis cs:2,6) to[bend left] (axis cs:1,10);
		\end{axis}
	\end{tikzpicture}
	\caption{Possible steps for an \esa.}
	\label{fig:algorithm-illustration}
\end{figure}

If we recall \cref{ex:moo-problem-ST}, the solution $x^0$ is the spanning tree $T_1$.
For the sake of convenience, \cref{fig:algorithm-illustration} shows the outcome set of the example again.
Here, the weight set $\Lambda(T_1) = [0,\tfrac{1}{2}]$.
The algorithm will now increase $\lambda$ from $0$ until $\tfrac{1}{2}$, at which points there are multiple optimal solutions to $\problempws{\tfrac{1}{2}}$, namely $T_1$, $T_2$, and $T_3$.
At this point, as indicated in the figure, the algorithm would want to transition to $T_3$ (or to $T_2$ and then $T_3$ since we allow intermediate points in $\feasibleset_{SE}$ to be visited).

The way this is realised is by noting that $T_2$ and $T_3$ are exactly those spanning trees, for which the line between their objective value and $\objective{T_1}$ is least steep.
Thus, the algorithm looks at the points that have lower $\objective[1]{}$-value and transition to the one that leads to the least steep slope and then continues onward from that point.
We formally prove that this approach works in the first subsection, which gives us a \emph{global algorithm}.

The downside of this algorithm is that it needs to look at lots of points in every step.
We remedy this by not looking at all points that have lower $\objective[1]{}$-value, but only at those that are adjacent to the current point.
This will not work in general, since we might miss all the solutions with the least steep slope in this manner.
Thus, the algorithm only works for specific notions of adjacency.
In the second subsection, we provide a sufficient condition for the adjacency relation that guarantees the algorithm's correctness.
There, we will also see that adjacent basic solutions of linear programs satisfy this condition and that our algorithm in this case is exactly the parametric Simplex algorithm.

\subsection{A global algorithm}
As we noted above, we want to transition from a solution $x$ to a solution $x'$ of lower $\objective[1]{}$-value for which the slope $\alpha$ between $\objective{x}$ and $\objective{x'}$ is least steep.
Such a slope $\alpha$ is directly related to the value $\lambda$ for which both $x$ and $x'$ are optimal:
this is the case when $w\coloneqq(\lambda,1-\lambda)\T$ is a normal vector of the line of slope $\alpha$.
A simple computation shows that this is equivalent to $\lambda=\tfrac{\alpha}{\alpha-1}$.
Thus, $w\T\objective{x}$ is lower than $w\T\objective{x'}$ before this $\lambda$ and higher after, making $\lambda$ an upper bound on the weight set component of $x$.
Consequently, to find the upper bound of $\Lambda(x)$ (at which point we get another optimal solution), we need to find the maximal slope, which leads to the smallest bound.

Analogously, solutions $x'$ with higher $\objective[1]{}$-value provide us with lower bounds on the weight set component in the same way, where the highest, so the one corresponding to the minimal slope, is the lower bound of $\Lambda(x)$.
That this works is the result of \cref{algorithm-weight-set}, for which the following definition gives us convenient access to slopes and their corresponding parameters in the weight set.
\begin{definition}
	Let $x\in \feasibleset$.
	For $x'\in \feasibleset$, we define
	\begin{displaymath}
		\vect{\Delta_1(x',x)\\\Delta_2(x',x)} \coloneqq  \objective{x'} - \objective{x}.
	\end{displaymath}
	If $\Delta_1(x',x)\neq 0$, we define $s(x',x) \coloneqq \tfrac{\Delta_2(x',x)}{\Delta_1(x',x)}$.
	In addition, we set $\feasibleset^<(x) \coloneqq \Set{x'\in \feasibleset : \Delta_1(x',x) < 0}$ and $\feasibleset^>(x) \coloneqq \Set{x'\in \feasibleset : \Delta_1(x',x) > 0}$.
	
	Finally, for $\lambda\in[0,1)$ we define $\alpha(\lambda)$ as $\tfrac{-\lambda}{1-\lambda} \in (-\infty,0]$.
	Conversely, for $\alpha\in (-\infty,0]$, we define $\lambda(\alpha)$ as $\tfrac{\alpha}{\alpha-1} \in [0,1)$.
	We extend the notation by setting $\alpha(1) \coloneqq -\infty$ and $\lambda(-\infty) \coloneqq 1$.
\end{definition}

Before we do the computation, we note the following two properties of $\alpha$ and $\lambda$.
First, they are inverse functions and, second, they invert inequalities, for example, if $\lambda\leq \lambda'$, then $\alpha(\lambda)\geq \alpha(\lambda')$.
We shall make use of these properties regularly.
\begin{lemma}
	\label{algorithm-weight-set}
	Let $x\in \feasibleset_{SE}$.
	Then $\Lambda(x)=\left[\lambda(\alpha^x),\lambda(\alpha_x)\right]$, where
	\begin{align*}
		\alpha_x &\coloneqq \max \Set{s(x',x) : x' \in \feasibleset^<(x)}\cup\Set{-\infty},\\
		\alpha^x  &\coloneqq \min\, \Set{s(x',x) : x' \in \feasibleset^>(x)}\cup\Set{0}.
	\end{align*}
	Additionally, if $s(x',x) = \alpha_x$ for $x'\in \feasibleset^<(x)$, then $x'\in \feasibleset_{\lambda(\alpha_x)}$ and if $s(x',x) = \alpha^x$ for $x'\in \feasibleset^>(x)$, then $x'\in \feasibleset_{\lambda(\alpha^x)}$.
	In both cases, $x'$ is supported efficient.
\end{lemma}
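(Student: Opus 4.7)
My strategy is to translate the optimality condition $x \in \feasibleset_\lambda$ into pointwise inequalities on the slopes $s(x',x)$, one per $x' \in \feasibleset$, and then invert via the decreasing bijection $\lambda \leftrightarrow \alpha(\lambda)$.

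First I would rewrite $x \in \feasibleset_\lambda$ as $\lambda \Delta_1(x',x) + (1-\lambda)\Delta_2(x',x) \geq 0$ for every $x' \in \feasibleset$, and split into three cases according to the sign of $\Delta_1(x',x)$. When $\Delta_1(x',x)=0$, the condition reduces to $(1-\lambda)\Delta_2(x',x) \geq 0$, which for $\lambda < 1$ is $\lambda$-independent and is automatically valid since $x$ is supported efficient (apply it at any $\lambda^* \in (0,1) \cap \Lambda(x)$). When $\Delta_1(x',x) > 0$, dividing by $\Delta_1(x',x)$ rearranges the inequality into $s(x',x) \geq \alpha(\lambda)$; when $\Delta_1(x',x) < 0$, the sign flip gives $s(x',x) \leq \alpha(\lambda)$.

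Collecting these over all $x'$, the statement $x \in \feasibleset_\lambda$ is equivalent, for $\lambda < 1$, to the two conditions $\alpha(\lambda) \leq \alpha^x$ and $\alpha(\lambda) \geq \alpha_x$. The augmentations by $\{0\}$ and $\{-\infty\}$ in the definitions of $\alpha^x$ and $\alpha_x$ are exactly what is needed here: they make the $\min/\max$ well-defined when $\feasibleset^>(x) = \emptyset$ or $\feasibleset^<(x) = \emptyset$, and they match the range $\alpha(\lambda) \in [-\infty,0]$ attained on $\lambda \in [0,1]$. Since $\alpha$ is a strictly decreasing bijection between $[0,1]$ and $[-\infty,0]$ with inverse $\lambda(\cdot)$, the two conditions together are equivalent to $\lambda(\alpha^x) \leq \lambda \leq \lambda(\alpha_x)$. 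A separate one-line check handles $\lambda=1$: one has $1 \in \Lambda(x)$ iff $\feasibleset^<(x)=\emptyset$ iff $\alpha_x=-\infty$ iff $\lambda(\alpha_x)=1$, which is consistent with the interval formula.

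For the second claim, given $x' \in \feasibleset^<(x)$ with $s(x',x) = \alpha_x$, I would use the factorisation
\[
    \lambda \Delta_1(x',x) + (1-\lambda)\Delta_2(x',x) = (1-\lambda)\,\Delta_1(x',x)\,\bigl(s(x',x) - \alpha(\lambda)\bigr),
\]
which vanishes at $\lambda = \lambda(\alpha_x)$. Thus $x'$ attains the same weighted-sum value as $x$ there, proving $x' \in \feasibleset_{\lambda(\alpha_x)}$. Since $x$ is efficient and $\Delta_1(x',x) < 0$, one must have $\Delta_2(x',x) > 0$, so $\alpha_x = s(x',x) < 0$ and hence $\lambda(\alpha_x) \in (0,1)$, making $x' \in \feasibleset_{SE}$. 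The argument for $x' \in \feasibleset^>(x)$ with $s(x',x) = \alpha^x$ is symmetric.

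The main obstacle is administrative rather than conceptual: ensuring that the $\{0\}$ and $\{-\infty\}$ augmentations, the endpoints $\lambda = 0,1$, and the empty-set cases for $\feasibleset^<(x), \feasibleset^>(x)$ all mesh correctly, and verifying in each case that the resulting $\lambda(\alpha^x), \lambda(\alpha_x)$ lie in $(0,1)$ so that the $X_{\lambda(\cdot)}$-memberships strengthen to supported efficiency. Efficiency of $x$ (via $x \in \feasibleset_{SE}$) is what rules out the otherwise-awkward sign combinations.
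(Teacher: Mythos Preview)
Your proposal is correct and follows essentially the same approach as the paper: both translate $x\in\feasibleset_\lambda$ into per-$x'$ slope inequalities via the sign of $\Delta_1(x',x)$, dispose of the $\Delta_1=0$ case using supported efficiency of $x$, and invert through the decreasing bijection $\lambda\leftrightarrow\alpha$; the ``additionally'' part is likewise handled in both by the equality case of the same computation together with $\alpha_x\in(-\infty,0)$ forcing $\lambda(\alpha_x)\in(0,1)$. Your explicit factorisation and separate treatment of $\lambda=1$ are minor presentational differences, not a different argument.
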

\begin{proof}
	Let $x,\, x'\in\feasibleset$ with $\Delta_1(x',x) < 0$ and $\lambda\in(0,1)$, $w\coloneqq(\lambda,1-\lambda)\T$.
	Then
	\begin{align}\nonumber
		w\T\objective{x} \leq w\T\objective{x'}
		&\iff (1-\lambda)\Delta_2(x',x) \geq -\lambda\Delta_1(x',x) \\\nonumber
		&\iff (1-\lambda)s(x',x) \leq -\lambda \\\nonumber
		&\iff s(x',x) \leq \frac{-\lambda}{1-\lambda} = \alpha(\lambda)\\
		&\iff \lambda(s(x',x)) \geq \lambda.\label{eq:x-better-x'-slopes}
	\end{align}
	The same computation works with \enquote{$\geq$} and \enquote{$=$}.
	In particular, if $\Delta_1(x',x) > 0$, then
	\begin{displaymath}
		w\T\objective{x} \leq w\T\objective{x'} \iff \lambda(s(x',x)) \leq \alpha(\lambda).
	\end{displaymath}
	Since $x\in\feasibleset_{\lambda}$ if and only if $w\T\objective{x} \leq w\T\objective{x'}$ for all $x'\in\feasibleset$, we can use the computation above to reformulate the right hand side to $\lambda \leq\lambda(s(x',x))$ for $x'\in\feasibleset^<(x)$, $\lambda \geq\lambda(s(x',x))$ for $x'\in\feasibleset^>(x)$, and $\objective[2]{x} \leq \objective[2]{x'}$ if $\Delta_1(x',x)=0$.
	
	If $x$ is supported efficient, then the third condition is satisfied and thus, by the definition of $\alpha_x$ and $\alpha^x$, we get that $x\in\feasibleset_{\lambda}$ if and only if $\lambda\in\left[\lambda(\alpha^x),\lambda(\alpha_x)\right]$.
	As a result, $\Lambda(x)$ has the desired form.
	
	For the \enquote{additionally} part, we regard the case that $s(x',x) = \alpha_x = \alpha(\lambda(\alpha_x))$.
	Using the equality version of \eqref{eq:x-better-x'-slopes}, we see that this implies that $x$ and $x'$ have the same objective value for $\lambda(\alpha_x)$.
	Since $x\in\feasibleset_{\lambda(\alpha_x)}$, so is $x'$.
		
	To see that $x'$ is supported efficient, we note that $\Delta_1(x',x) < 0$ and, since $x$ was efficient, $\Delta_2(x',x) > 0$.
	Hence, $\alpha_x\in(-\infty,0)$ and $\lambda(\alpha_x)\in(0,1)$.
	The case for $\alpha^x$ is analogous.
\end{proof}

Before we formulate our algorithm, let us note two simple consequences of this lemma.
The first tells us that a transition from $x$ to a solution $x'$ to the left of maximal slope is next in the weight set decomposition in the sense that the weight set component of $x'$ starts where the component of $x$ ended.
The second tells us that the slope between $\objective{x}$ and $\objective{x'}$ must be equal to $\alpha(\lambda)$ if both $x$ and $x'$ are optimal for $\problempws{\lambda}$.
\begin{corollary}
	\label{algorithm-lb-up-alpha-relation}
	Let $x\in\feasibleset_{SE}$, $x'\in\feasibleset^<(x)$, and $\alpha_x = s(x,x')$.
	Then, $\alpha^{x'} = \alpha_x$.
\end{corollary}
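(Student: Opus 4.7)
The plan is to pin down $\alpha^{x'}$ by a matching pair of inequalities, both derived directly from \cref{algorithm-weight-set} applied to $x$ and to $x'$. First, I would observe that the hypothesis $\alpha_x = s(x,x')$ together with the "additionally" part of the lemma gives $x' \in \feasibleset_{SE}$ and $x' \in \feasibleset_{\lambda(\alpha_x)}$; moreover $\alpha_x \in (-\infty, 0)$, so all the quantities $\alpha^{x'}$, $\alpha_{x'}$, $\lambda(\alpha_x)$ are well defined. This legitimises applying \cref{algorithm-weight-set} again, this time to $x'$.

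For the upper bound $\alpha^{x'} \le \alpha_x$, I would use the symmetry of the slope: from the definition of $\Delta_i$ and $s$ one reads off $s(x,x') = s(x',x) = \alpha_x$, and since $\Delta_1(x,x') = -\Delta_1(x',x) > 0$ we have $x \in \feasibleset^>(x')$. Hence $x$ is a candidate in the infimum defining $\alpha^{x'}$, which immediately yields $\alpha^{x'} \le s(x,x') = \alpha_x$.

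For the matching lower bound $\alpha^{x'} \ge \alpha_x$, I would invoke \cref{algorithm-weight-set} for $x'$ to get $\Lambda(x') = [\lambda(\alpha^{x'}), \lambda(\alpha_{x'})]$, and combine this with $x' \in \feasibleset_{\lambda(\alpha_x)}$ (i.e.\ $\lambda(\alpha_x) \in \Lambda(x')$) to conclude $\lambda(\alpha^{x'}) \le \lambda(\alpha_x)$. Since $\lambda$ reverses inequalities on $(-\infty, 0]$, this translates to $\alpha^{x'} \ge \alpha_x$, and together with the upper bound gives the claimed equality. I do not anticipate any real obstacle here; the only point requiring a little care is getting the direction of the inequality right when passing between $\alpha$ and $\lambda$, which the paper has already flagged as a recurring subtlety.
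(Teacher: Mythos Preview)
Your proposal is correct and follows essentially the same route as the paper's proof: obtain $\alpha^{x'}\le\alpha_x$ from $x\in\feasibleset^>(x')$ being a candidate in the minimum defining $\alpha^{x'}$, and obtain the reverse inequality from $\lambda(\alpha_x)\in\Lambda(x')$ via the interval description of $\Lambda(x')$ in \cref{algorithm-weight-set}. The only difference is cosmetic: you make the preliminary step of verifying $x'\in\feasibleset_{SE}$ (so that \cref{algorithm-weight-set} applies to $x'$) explicit, whereas the paper leaves this implicit.
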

\begin{proof}
	Since $x'\in\feasibleset^<(x)$, we get $x\in\feasibleset^>(x')$ and $\alpha^{x'} \leq s(x,x') = \alpha_x$.
	But since both $x$ and $x'$ are in $X_{\lambda(\alpha_x)}$ by \cref{algorithm-weight-set}, we have that $\lambda(\alpha_x)\in\Lambda(x')$.
	Hence, $\lambda(\alpha_x) \geq \lambda(\alpha^{x'})$, which yields $\alpha_x \leq \alpha^{x'}$, giving us the desired equality.
\end{proof}

\begin{corollary}
	\label{algorithm-slopes-coincide-for-same-lambda}
	Let $\lambda\in\Lambda$, $x\in\feasibleset_\lambda$, and $x'\in\feasibleset$.
	Then $s(x',x) \leq \alpha(\lambda)$ if $x'\in\feasibleset^<(x)$ and $s(x',x) \geq \alpha(\lambda)$ is $x'\in\feasibleset^>(x)$.
	In particular, if $x'\in\feasibleset_\lambda$ and $\objective{x}\neq\objective{x'}$, then $\alpha(\lambda) = s(x',x)$.
\end{corollary}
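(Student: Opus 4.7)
My plan is to derive this as a direct repackaging of the chain of equivalences already established in the proof of \cref{algorithm-weight-set}. Concretely, for $w \coloneqq (\lambda,1-\lambda)\T$ and $\Delta_1(x',x) < 0$, that computation established $w\T\objective{x} \leq w\T\objective{x'} \iff s(x',x) \leq \alpha(\lambda)$; redoing the same calculation for $\Delta_1(x',x) > 0$ does not flip the second inequality (since one divides by a positive quantity), and so gives $w\T\objective{x} \leq w\T\objective{x'} \iff s(x',x) \geq \alpha(\lambda)$. Since $x \in \feasibleset_\lambda$ means $w\T\objective{x} \leq w\T\objective{x'}$ for every $x' \in \feasibleset$, I would simply specialise to $x' \in \feasibleset^<(x)$ and to $x' \in \feasibleset^>(x)$ to read off the two asserted inequalities.

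For the \enquote{in particular} part, I would additionally assume $x' \in \feasibleset_\lambda$ and $\objective{x} \neq \objective{x'}$; implicitly $\Delta_1(x',x) \neq 0$ is needed for $s(x',x)$ to be defined. Optimality of both $x$ and $x'$ for $\problempws{\lambda}$ forces $w\T\objective{x} = w\T\objective{x'}$, and feeding this into the equality version of the same chain of equivalences (explicitly noted in the proof of \cref{algorithm-weight-set}) yields $s(x',x) = \alpha(\lambda)$ at once. I do not expect any real obstacle here: the whole argument is a routine consequence of work already carried out, and the only \enquote{choice} to make is which direction of the iff to apply in each case.
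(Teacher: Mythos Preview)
Your proposal is correct. The paper argues slightly differently: rather than reopening the equivalence chain from the proof of \cref{algorithm-weight-set}, it invokes the \emph{statement} of that lemma, bounding $s(x',x) \leq \alpha_x \leq \alpha(\lambda)$ (respectively $s(x',x) \geq \alpha^x \geq \alpha(\lambda)$) via the endpoints of $\Lambda(x)$. For the \enquote{in particular} part, the paper exploits symmetry---since $x'\in\feasibleset_\lambda$ as well, one may swap the roles of $x$ and $x'$ and combine the two resulting inequalities using $s(x,x')=s(x',x)$---instead of appealing to the equality version of the computation. Both routes are equally short; yours is a touch more self-contained (and sidesteps the need for $x\in\feasibleset_{SE}$ that the statement of \cref{algorithm-weight-set} carries), while the paper's stays at the level of already-proved statements.
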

\begin{proof}
	If $x'\in\feasibleset^<(x)$, we get $s(x',x) \leq \alpha_x \leq \alpha(\lambda)$ and if $x'\in\feasibleset^>(x)$, then $s(x',x) \geq \alpha^x \geq \alpha(\lambda)$ by \cref{algorithm-weight-set}.
	For the \enquote{in particular} part, note that we can exchange the roles of $x$ and $x'$ in this case.
	We may assume that $x'\in\feasibleset^<(x)$, which yields $\alpha(\lambda) \leq s(x',x) = s(x,x') \leq \alpha(\lambda)$.
\end{proof}

We can now formulate the very simple \cref{alg:ESN-simple}:
it starts with the lexicographically optimal solution $x^0$ as described and, in iteration $k$, determines the solution $x^{k}$ in $\feasibleset^<(x^{k-1})$ that maximises the slope $s(x^{k},x^{k-1})$ and continues looking for points from $x^k$.
\begin{algorithm}[tb]
	Let $x^0$ be a lexicographically optimal solution with respect to $(\objective[2]{},\objective[1]{})$\;
	$S \algass \Set{x^0}$, $k\algass 1$\;
	\While{$\feasibleset^<(x^{k-1}) \neq \emptyset$}
	{
		Let $x^{k} \in \arg\max\Set{s(x',x^{k-1}) : x'\in \feasibleset^<(x^{k-1})}$\;
		$S \algass S\cup\Set{x^{k}}$, $k\algass k+1$\;
	}
	\Return{$S$}
	\caption{A generic \esa{}.
		\label{alg:ESN-simple}}
\end{algorithm}

Based on the previous lemma and its corollaries, we can make a few simple observations.
\begin{observation}
	\label{algorithm-properties-of-simple-alg}
	For points $x^0,\ldots,x^l$ computed by \cref{alg:ESN-simple} the following properties hold:
	\begin{enumerate}
		\item $x^0\in\feasibleset_{SE}$, because it is a lexicographic solution.
		\item For $k\in\Set{1,\ldots,l}$, $s(x^{k},x^{k-1}) = \alpha_{x^{k-1}} = \alpha^{x^k} \eqqcolon \alpha^k$, by \cref{algorithm-lb-up-alpha-relation}.
		\item For $k\in\Set{1,\ldots,l}$, both $x^{k-1}$ and $x^k$ are supported efficient and in $\feasibleset_{\lambda^k}$ for $\lambda^k\coloneqq \lambda(\alpha^k)$, inductively by \cref{algorithm-weight-set}.
		\item For $k\in\Set{2,\ldots,l}$, $\alpha^{k-1} = \alpha^{x^{k-1}} \geq \alpha_{x^{k-1}} = \alpha^{k}$ and thus $\lambda^{k-1} \leq \lambda^{k}$.
	\end{enumerate}
\end{observation}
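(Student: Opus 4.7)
The four items are essentially annotated assertions; the plan is to verify them together by induction on $k$, keeping the invariants tightly coupled so that each step sets up the next. I would first check the base case (property~(a)) in isolation and then do the inductive step $k-1\to k$ by establishing (b) and (c) jointly, from which (d) falls out as a direct consequence.

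For the base case, $x^0$ is the lexicographically optimal solution with respect to $(\objective[2]{},\objective[1]{})$, which is a standard fact (cited in the paper as \cite[Lemma~2.2]{Bac26}) that places it in $\feasibleset_{ESE}\subseteq \feasibleset_{SE}$; this gives~(a). For the inductive step, suppose $x^{k-1}\in\feasibleset_{SE}$. Because the algorithm only enters the loop body when $\feasibleset^<(x^{k-1})\neq\emptyset$, the set appearing in the $\arg\max$ is non-empty, so by the definition of $\alpha_{x^{k-1}}$ in \cref{algorithm-weight-set} we obtain
\begin{displaymath}
    s(x^{k},x^{k-1}) \;=\; \max\Set{s(x',x^{k-1}) : x'\in\feasibleset^<(x^{k-1})} \;=\; \alpha_{x^{k-1}}.
\end{displaymath}
The ``additionally'' clause of \cref{algorithm-weight-set} then gives $x^{k}\in \feasibleset_{\lambda(\alpha_{x^{k-1}})}$, and in particular $x^{k}$ is supported efficient. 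Applying \cref{algorithm-lb-up-alpha-relation} to the pair $(x^{k-1},x^{k})$ yields $\alpha^{x^{k}} = \alpha_{x^{k-1}}$, which together with the previous equality proves the chain of identities $s(x^{k},x^{k-1}) = \alpha_{x^{k-1}} = \alpha^{x^{k}}=\alpha^{k}$ asserted in~(b). Since $x^{k-1}\in \feasibleset_{SE}$ by the inductive hypothesis and $x^{k}\in \feasibleset_{\lambda^{k}}$ with $\lambda^{k}\in(0,1)$ (as $\alpha_{x^{k-1}}\in(-\infty,0)$ by efficiency of $x^{k-1}$), both lie in $\feasibleset_{\lambda^k}$, proving~(c) and closing the induction.

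For~(d), the weight-set description $\Lambda(x^{k-1}) = [\lambda(\alpha^{x^{k-1}}),\lambda(\alpha_{x^{k-1}})]$ from \cref{algorithm-weight-set} is a non-empty interval, so its left endpoint is at most its right endpoint, which by the order-reversing property of $\lambda$ translates into $\alpha^{x^{k-1}}\geq \alpha_{x^{k-1}}$. Using the identifications $\alpha^{k-1}=\alpha^{x^{k-1}}$ (from~(b) at step $k-1$) and $\alpha^{k}=\alpha_{x^{k-1}}$ (from~(b) at step $k$) gives $\alpha^{k-1}\geq\alpha^{k}$; another application of the order-reversing property of $\lambda$ yields $\lambda^{k-1}\leq\lambda^{k}$.

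The only real subtlety is keeping the hypotheses of \cref{algorithm-weight-set} and \cref{algorithm-lb-up-alpha-relation} valid at each step: both require the ``anchor'' point to be supported efficient. This is precisely why (a)--(c) have to be established simultaneously by induction rather than in sequence, since the invocation of \cref{algorithm-weight-set} at step $k$ relies on the inductive conclusion that $x^{k-1}\in\feasibleset_{SE}$, which itself was produced by the ``additionally'' clause at step $k-1$.
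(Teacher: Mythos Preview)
Your proposal is correct and follows exactly the approach the paper sketches: the observation in the paper carries its justifications inline (``because it is a lexicographic solution'', ``by \cref{algorithm-lb-up-alpha-relation}'', ``inductively by \cref{algorithm-weight-set}''), and you have simply unpacked these into a clean induction, correctly noting that the supported-efficiency of the anchor $x^{k-1}$ is the hypothesis needed to invoke \cref{algorithm-weight-set} and \cref{algorithm-lb-up-alpha-relation} at each step.

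One small imprecision: in your argument for~(c) you write ``Since $x^{k-1}\in\feasibleset_{SE}$ \ldots\ both lie in $\feasibleset_{\lambda^k}$'', but membership in $\feasibleset_{SE}$ alone does not place $x^{k-1}$ in $\feasibleset_{\lambda^k}$ for this particular $\lambda^k$. The missing half-line is that $\lambda^k=\lambda(\alpha_{x^{k-1}})$ is the right endpoint of $\Lambda(x^{k-1})=[\lambda(\alpha^{x^{k-1}}),\lambda(\alpha_{x^{k-1}})]$ from \cref{algorithm-weight-set}, so $x^{k-1}\in\feasibleset_{\lambda^k}$. With that added, the argument is complete and matches the paper.
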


We now have everything we need to prove correctness.
\begin{theorem}
	\label{algorithm-simple-alg-correctness}
	\Cref{alg:ESN-simple} is correct, that is, it returns a set $S\subseteq \feasibleset_{SE}$ such that $\outcomeset_{ESN}\subseteq \objective{S}$.
\end{theorem}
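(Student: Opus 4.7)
The plan is to establish the two inclusions $S\subseteq\feasibleset_{SE}$ and $\outcomeset_{ESN}\subseteq\objective{S}$ separately. The first is immediate from \cref{algorithm-properties-of-simple-alg}, which already records that every iterate $x^k$ produced by the algorithm lies in $\feasibleset_{SE}$. Note also that finiteness of $\feasibleset$ together with the strict increase of $\lambda^k$ between consecutive iterates (\cref{algorithm-properties-of-simple-alg}(4), where the inequality is in fact strict by \cref{algorithm-slopes-coincide-for-same-lambda} applied to $x^{k-1},x^k$ which have distinct images) guarantees termination.

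For the second inclusion, I will show that the weight-set components of the iterates $x^0,\ldots,x^l$ tile $[0,1]$. Setting $\lambda^0\coloneqq 0$ and $\lambda^{l+1}\coloneqq 1$, the claim is $\Lambda(x^k)=[\lambda^k,\lambda^{k+1}]$ for every $k\in\Set{0,\ldots,l}$. For $k\in\Set{1,\ldots,l-1}$, this follows by combining \cref{algorithm-properties-of-simple-alg}(2) applied to both $k$ and $k+1$, which yields $\alpha^{x^k}=\alpha^k$ and $\alpha_{x^k}=\alpha^{k+1}$, with \cref{algorithm-weight-set}. For $k=0$, the lexicographic optimality of $x^0$ with respect to $(\objective[2]{},\objective[1]{})$ ensures $x^0\in\feasibleset_0$, hence $0\in\Lambda(x^0)$, which forces $\alpha^{x^0}=0$. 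For $k=l$, the termination condition $\feasibleset^<(x^l)=\emptyset$ means the maximum defining $\alpha_{x^l}$ is taken over only $\Set{-\infty}$, so $\lambda(\alpha_{x^l})=1$. Together, this yields $\bigcup_{k=0}^l\Lambda(x^k)=[0,1]$.

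Now pick $y^*\in\outcomeset_{ESN}$ and an $x^*\in\feasibleset_{ESE}$ with $\objective{x^*}=y^*$. By \cref{prelims-ese-multiple-weights}, $\Lambda(x^*)$ is a non-degenerate interval $[a,b]$ with $a<b$, so I can choose $\lambda^*\in(a,b)$. By the tiling, $\lambda^*\in\Lambda(x^k)$ for some $k$, so both $x^*$ and $x^k$ solve $\problempws{\lambda^*}$. If $\objective{x^k}\neq y^*$, then \cref{algorithm-slopes-coincide-for-same-lambda} forces $s(x^k,x^*)=\alpha(\lambda^*)$, which puts $\objective{x^k}$ on the supporting line of $\problempws{\lambda^*}$ through $y^*$. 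A short weighted-sum calculation analogous to the one in the proof of \cref{algorithm-weight-set} then shows that, depending on the sign of $\Delta_1(x^k,x^*)$, the solution $x^k$ would strictly beat $x^*$ for some $\lambda$ slightly below or above $\lambda^*$, contradicting $\lambda^*\in(a,b)\subseteq\Lambda(x^*)$. Hence $\objective{x^k}=y^*$ and $y^*\in\objective{S}$.

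The main obstacle is this last deduction: producing an iterate optimal at $\lambda^*$ is the easy consequence of the tiling, but concluding that such an iterate must attain the image $y^*$ is what makes genuine use of the hypothesis that $y^*$ is \emph{extreme} rather than merely supported. The crucial point is that $\lambda^*$ can be chosen strictly inside $\Lambda(x^*)$ precisely because $\Lambda(x^*)$ is non-degenerate by \cref{prelims-ese-multiple-weights}; choosing $\lambda^*$ at an endpoint would not suffice, since at those endpoints a whole edge of $\conv{\outcomeset}+\RR^2_{\geq 0}$ is optimal and multiple non-dominated points share optimality.
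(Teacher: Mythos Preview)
Your main argument is correct and takes a genuinely different route from the paper's. The paper argues directly in the objective space: for $y=\objective{x}\in\outcomeset_{ESN}$ it locates $\objective[1]{x}$ between two consecutive iterates, $\objective[1]{x}\in[\objective[1]{x^k},\objective[1]{x^{k-1}}]$, and then sandwiches $\alpha^x\leq s(x^{k-1},x)\leq\alpha^k\leq s(x^k,x)\leq\alpha_x$ to force $\Lambda(x)\subseteq\{\lambda^k\}$, contradicting \cref{prelims-ese-multiple-weights}. You argue in the weight space instead: you first establish the full weight-set decomposition $\Lambda(x^k)=[\lambda^k,\lambda^{k+1}]$ (which the paper only records \emph{after} the theorem, as a by-product), and then exploit that an interior $\lambda^*\in\Lambda(x^*)$ must fall into some $\Lambda(x^k)$, using a strict comparison at weights just above or below $\lambda^*$ to identify images. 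Both proofs hinge on \cref{prelims-ese-multiple-weights} at the same point; yours is more structural because it makes the decomposition explicit up front, while the paper's is slightly leaner for this particular statement since it does not need the endpoints $\lambda^0=0$ and $\lambda^{l+1}=1$ to be handled separately.

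One small flaw: your termination argument via strict increase of $\lambda^k$ is incorrect. When three or more consecutive iterates have collinear images (as can happen in \cref{prelim-ex-ese-not-connected}), the slopes $\alpha^k$ coincide and $\lambda^k$ stays constant; \cref{algorithm-slopes-coincide-for-same-lambda} gives you $\alpha(\lambda^k)=s(x^k,x^{k-1})$, which merely restates the definition of $\alpha^k$ and does not sharpen the inequality in \cref{algorithm-properties-of-simple-alg}. Termination follows instead from the strict decrease of $\objective[1]{x^k}$ (since $x^k\in\feasibleset^<(x^{k-1})$) together with finiteness of $\feasibleset$. This does not affect the rest of your proof, as your tiling only requires $\lambda^k\leq\lambda^{k+1}$.
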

\begin{proof}
	By \cref{algorithm-properties-of-simple-alg}, we know that all the points $x^0,\ldots,x^l$ computed are in $\feasibleset_{SE}$.
	Thus, we only need to show that we compute at least one preimage for every point in $\outcomeset_{ESN}$.
	
	To this end, let $y\in \outcomeset_{ESN}$, then $y=\objective{x}$ for an $x\in \feasibleset_{ESE}$.
	We know that $\objective[2]{x^0} \leq \objective[2]{x}$ and, since $x$ is efficient, $\objective[1]{x} \leq \objective[1]{x^0}$.
	Furthermore, $\objective[1]{x^l} \leq \objective[1]{x}$ since the algorithm terminated at $x^l$.
	
	If $\objective[1]{x} = \objective[1]{x^k}$ for some $k$, then $\objective[2]{x} = \objective[2]{x^k}$ since both $x$ and $x^k$ are efficient solutions and $y=\objective{x^k}\in \objective{S}$.
	Otherwise, $\objective[1]{x} \in\left(\objective[1]{x^k},\objective[1]{x^{k-1}}\right)$ for some $1\leq k\leq l$.
	In this case, we can apply \cref{algorithm-properties-of-simple-alg}: since $x\in\feasibleset^<(x^{k-1})$, $\alpha^k = \alpha_{x^{k-1}} \geq s(x,x^{k-1}) = s(x^{k-1},x)$.
	Analogously, $x\in\feasibleset^>(x^k)$ implies $\alpha^k = \alpha^{x^k} \leq s(x,x^{k})$ as well.
	
	Combining this with the fact that $x^k\in\feasibleset^<(x)$ and $x^{k-1}\in\feasibleset^>(x)$, we conclude that $\alpha_x \geq s(x^k,x) \geq \alpha^k \geq s(x^{k-1},x) \geq \alpha^x$, which yields $\Lambda(x) \subseteq \Set{\lambda^k}$, a contradiction to \cref{prelims-ese-multiple-weights}.
\end{proof}

Also note that we could run the algorithm starting at an arbitrary solution $x^0\in\feasibleset_{SE}$ and we would still obtain all extreme-supported efficient solutions with a higher $\objective[2]{}$-value.
This is true since the only properties of $x^0$ that we used is that $x^0$ is supported efficient and that $\objective[2]{x^0} \leq y_2$ for $y\in\outcomeset_{ESN}$.
In summary, we get the following corollary.
\begin{corollary}
	\label{algorithm-simple-alg-start-midway}
	\Cref{alg:ESN-simple}, when initialised with an arbitrary solution $x^0\in\feasibleset_{SE}$, computes a set $S\subseteq \feasibleset_{SE}$ with $\outcomeset_{ESN}\cap\Set{y\in\RR^2 : y_2\geq \objective[2]{x^0}} \subseteq \objective{S}$.
\end{corollary}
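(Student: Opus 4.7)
The plan is to inspect the proof of \cref{algorithm-simple-alg-correctness} and observe that the lexicographic optimality of $x^0$ enters in exactly two places, both of which survive under the weaker hypotheses of the corollary. Concretely, lex-optimality was invoked to guarantee (i) $x^0\in\feasibleset_{SE}$ and (ii) $\objective[2]{x^0}\leq\objective[2]{x}$ for every $x\in\feasibleset_{ESE}$. Point (i) is now a direct hypothesis, while (ii) is replaced by the cutoff $y_2\geq\objective[2]{x^0}$ built into the target set. So the proof will be a near-verbatim copy that simply tracks these two substitutions.

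First I would verify $S\subseteq\feasibleset_{SE}$. The inductive argument supplied by \cref{algorithm-properties-of-simple-alg} only requires $x^0\in\feasibleset_{SE}$ as its base case, and the inductive step from $x^{k-1}$ to $x^k$ (via \cref{algorithm-weight-set,algorithm-lb-up-alpha-relation}) never appeals to lex-optimality, so this containment goes through unchanged.

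Next I would handle the non-trivial direction. Fix $y\in\outcomeset_{ESN}$ with $y_2\geq\objective[2]{x^0}$ and pick $x\in\feasibleset_{ESE}$ with $\objective{x}=y$. The inequality $\objective[2]{x^0}\leq\objective[2]{x}$ is immediate from the cutoff. The companion inequality $\objective[1]{x}\leq\objective[1]{x^0}$ still holds because $x$ is efficient and $x^0$ is supported (hence efficient): if it failed, then $\objective{x^0}\lneq\objective{x}$, contradicting efficiency of $x$. The termination bound $\objective[1]{x^l}\leq\objective[1]{x}$ is unaffected by how the algorithm was started. From here on, the case distinction — either $\objective[1]{x}=\objective[1]{x^k}$ for some $k$, or $\objective[1]{x}\in(\objective[1]{x^k},\objective[1]{x^{k-1}})$ for some $1\leq k\leq l$ — and the final contradiction with \cref{prelims-ese-multiple-weights} use only the relationships between consecutive iterates recorded in \cref{algorithm-properties-of-simple-alg}, so they carry over verbatim. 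There is no genuine obstacle: the entire task is the bookkeeping that confirms the two uses of lexicographic optimality can indeed be replaced by the present hypotheses $x^0\in\feasibleset_{SE}$ and $y_2\geq\objective[2]{x^0}$.
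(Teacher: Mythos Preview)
Your proposal is correct and follows the same approach as the paper: the paper simply remarks, just before stating the corollary, that the only properties of $x^0$ used in the proof of \cref{algorithm-simple-alg-correctness} are that $x^0$ is supported efficient and that $\objective[2]{x^0}\leq y_2$ for the relevant $y\in\outcomeset_{ESN}$. Your write-up is slightly more detailed (e.g.\ spelling out why $\objective[1]{x}\leq\objective[1]{x^0}$ survives), but the idea is identical.
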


\Cref{alg:ESN-simple} can easily be modified to return a set $S\subseteq\feasibleset_{ESE}$ by simply not adding the non-extreme-supported efficient solutions we find.
These can be determined on the fly since we have implicitly computed the weight set of each of the points we added:
by \cref{algorithm-weight-set,algorithm-properties-of-simple-alg},
\begin{displaymath}
	\Lambda(x^k) = \left[\lambda(\alpha^{x^k}),\lambda(\alpha_{x^k})\right] = \left[\lambda^{k},\lambda^{k+1}\right],
\end{displaymath}
for $k\in\Set{1,\ldots,l-1}$.
For the first and last solution, we get $\Lambda(x^0) = \left[0,\lambda^1\right]$ and $\Lambda(x^l) = \left[\lambda^l,1\right]$.
This makes it easy to check whether the weight set contains at most one element and the point can be discarded by \cref{prelims-ese-multiple-weights}.
Additionally, the algorithm also computes a weight set decomposition.

\subsection{Obtaining an adjacency-based version}
Now that we have our base outline, we want to use adjacency.
The basic idea is to not determine the next solution by finding the maximum slope among all other solutions, but just amongst those in the neighbourhood (so amongst the successors in the adjacency graph).
To this end, we let $D$ be the adjacency graph and denote the set $\feasibleset^<(x) \cap N_D^{\text{out}}(x)$ by $N^<(x)$.
The new algorithm, which is illustrated in \cref{alg:ESN-adj-based-generic}, now simply replaces $\feasibleset^<(x)$ by $N^<(x)$.
\begin{algorithm}[tb]
	Let $x^0$ be a lexicographically optimal solution with respect to $(\objective[2]{},\objective[1]{})$\;
	$S \algass \Set{x^0}$, $k\algass 1$\;
	\While{$N^<(x^{k-1}) \neq \emptyset$}
	{
		Let $x^{k} \in \arg\max\Set{s(x',x^{k-1}) : x'\in N^<(x^{k-1})}$\;
		$S \algass S\cup\Set{x^{k}}$, $k\algass k+1$\;
	}
	\Return{$S$}
	\caption{A generic adjacency-based \esa{}.
	\label{alg:ESN-adj-based-generic}}
\end{algorithm}

To make this work, we need to ensure that our notion of adjacency does not make us miss essential points when looking in this restricted set.
This is done by the following theorem.
\begin{theorem}
	\label{algorithm-generic-adjacency-alg-correctness}
	\Cref{alg:ESN-adj-based-generic} is correct if, for all $\lambda\in(0,1)$ and all $x\in\feasibleset_\lambda$, the set $\feasibleset^<(x)\cap\feasibleset_\lambda$ is either empty or contains a successor of $x$ in $D$.
\end{theorem}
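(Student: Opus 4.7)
The plan is to show that Algorithm \ref{alg:ESN-adj-based-generic}, under the stated hypothesis on $\adjrel$, produces exactly the same kind of sequence that Algorithm \ref{alg:ESN-simple} would, so that correctness transfers from \cref{algorithm-simple-alg-correctness}. Concretely, I will prove by induction on $k$ that (i) $x^{k-1}\in\feasibleset_{SE}$, (ii) whenever $N^<(x^{k-1})\neq\emptyset$, the maximum of $s(\cdot,x^{k-1})$ over $N^<(x^{k-1})$ coincides with $\alpha_{x^{k-1}}$, and (iii) $N^<(x^{k-1})=\emptyset$ forces $\feasibleset^<(x^{k-1})=\emptyset$.

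The base case $k=1$ is immediate: $x^0$ is lexicographically optimal, hence extreme-supported efficient, so $x^0\in\feasibleset_{SE}$. For the inductive step, assume $x^{k-1}\in\feasibleset_{SE}$. If $\feasibleset^<(x^{k-1})$ is non-empty, then $\alpha\coloneqq\alpha_{x^{k-1}}$ lies in $(-\infty,0)$ (finite because $\feasibleset^<(x^{k-1})\neq\emptyset$, and negative because $x^{k-1}$ is efficient so $\Delta_2>0$ for its maximiser), and $\lambda\coloneqq\lambda(\alpha)\in(0,1)$. By \cref{algorithm-weight-set}, $x^{k-1}\in\feasibleset_{\lambda}$, and any $x'\in\feasibleset^<(x^{k-1})$ attaining slope $\alpha$ lies in $\feasibleset_{\lambda}$, so $\feasibleset^<(x^{k-1})\cap\feasibleset_{\lambda}\neq\emptyset$. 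Applying the hypothesis to this $\lambda$ and $x^{k-1}$ yields a successor $\tilde x$ of $x^{k-1}$ in $\feasibleset^<(x^{k-1})\cap\feasibleset_{\lambda}$, so $\tilde x\in N^<(x^{k-1})$; by \cref{algorithm-slopes-coincide-for-same-lambda} this gives $s(\tilde x,x^{k-1})=\alpha(\lambda)=\alpha$. Since $N^<(x^{k-1})\subseteq\feasibleset^<(x^{k-1})$, the maximum of $s(\cdot,x^{k-1})$ over $N^<(x^{k-1})$ cannot exceed $\alpha$, and it is in fact equal to $\alpha$, proving (ii) and, contrapositively, (iii). The chosen $x^k$ therefore attains $s(x^k,x^{k-1})=\alpha_{x^{k-1}}$, and the additional clause of \cref{algorithm-weight-set} gives $x^k\in\feasibleset_{SE}$, completing the induction.

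Having established that the sequence $x^0,x^1,\ldots,x^l$ produced by Algorithm \ref{alg:ESN-adj-based-generic} is a legitimate execution of Algorithm \ref{alg:ESN-simple} with matching termination behaviour, I would invoke \cref{algorithm-simple-alg-correctness} directly to conclude $S\subseteq\feasibleset_{SE}$ and $\outcomeset_{ESN}\subseteq\objective{S}$.

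The only genuinely delicate point is the combination of steps (ii) and (iii): the hypothesis must be applied at precisely the right weight $\lambda=\lambda(\alpha_{x^{k-1}})$, and one must observe that the \emph{existence} of some $x'\in\feasibleset^<(x^{k-1})$ already forces $\feasibleset^<(x^{k-1})\cap\feasibleset_{\lambda}$ to be non-empty — this is exactly what the additional clause of \cref{algorithm-weight-set} provides. Everything else is routine bookkeeping around the definitions of $\alpha_x$, $\alpha^x$, and $\Lambda(x)$.
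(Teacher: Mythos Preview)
Your proposal is correct and follows essentially the same approach as the paper: both prove by induction that the adjacency-based algorithm is a valid execution of \cref{alg:ESN-simple} (each chosen $x^k$ attains $s(x^k,x^{k-1})=\alpha_{x^{k-1}}$) and then invoke \cref{algorithm-simple-alg-correctness}. Your treatment of the termination equivalence via point~(iii) is slightly more explicit than the paper's, but the argument is the same.
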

\begin{proof}
	We show inductively that the algorithm is a possible execution of \cref{alg:ESN-simple}, letting us transfer the correctness from \cref{algorithm-simple-alg-correctness}.
	More precisely, we show that all solutions chosen up to iteration $k$ could also have been candidates in \cref{alg:ESN-simple} and that both terminate at the same time.
	
	The first part is clearly true before the first iteration.
	Hence, we may assume the claim holds until iteration $k-1$ and want to show that the chosen element $x^k$ is also a possible choice for \cref{alg:ESN-simple}, that is, $s(x^k,x^{k-1}) = \alpha^k$.
	Since $N^<(x)\neq\emptyset$, $X^<(x)$ is as well and \Cref{alg:ESN-simple} chooses a solution $\tilde{x}^k$ with $s(\tilde{x}^k,x^{k-1}) = \alpha_x$.
	Using \cref{algorithm-properties-of-simple-alg}, we recall that $x^{k-1},\,\tilde{x}^k\in\feasibleset_{\lambda^k}$.
	Hence, $\tilde{x}^k\in\feasibleset^<(x)\cap\feasibleset_{\lambda^k}$, meaning this set contains a successor $\tilde{x}$ of $x$.
	By \cref{algorithm-slopes-coincide-for-same-lambda} and the choice of $x^k$,
	\begin{displaymath}
		s(x^k,x^{k-1}) \geq s(\tilde{x},x^{k-1}) = \alpha^k = \alpha_{x^{k-1}} \geq s(x^k,x^{k-1}),
	\end{displaymath}
	 proving the desired equality.
\end{proof}
Again, this proof works for the modified algorithm which starts with an arbitrary supported efficient point $x^0$.

We now take a look at an example, which shows that the parametric simplex algorithm is a special case of our result, at least if the bi-objective linear program it solves is non-degenerate.
\begin{example}[The parametric Simplex algorithm]
	The Simplex method~\cite{NW88} is a method for solving linear programs.
	Its extension to the bi-objective setting is the parametric simplex algorithm~\cite{Ehr05}, which solves the problem
	\begin{mini*}{}{\vect{c_1\T x\\c_2\T x}}{}{}
		\addConstraint{Ax}{= b}
		\addConstraint{x}{\geq 0}
	\end{mini*}
	where $A\in\RR^{m\times n}$ with $\rank A = m$, $b\in\RR^m$, and $c_1,\, c_2\in\RR^n$.
	For our presentation, we adhere closely to \cite[Section~6.2]{Ehr05}, with the exception that the algorithm there goes from $\lambda=1$ to $\lambda=0$, so we swap the objectives and thus $\lambda$ with $(1-\lambda)$.
	
	A basis $\calB$ of $A$ is a selection of $m$ linearly independent columns of $A$, that is, the submatrix $A_\calB$ containing these columns is invertible.
	We denote $A_\calB^{-1}A$ by $\tilde{A}$ and $A_\calB^{-1}b$ by~$\tilde{b}$.
	If $\tilde{b}\geq 0$, the basis is feasible and the vector of reduced costs is given by $\overline{c}\T = c\T - c_\calB\T\tilde{A}$.
	We need to assume that the problem is non-degenerate, that is, $\tilde{b} > 0$ for all bases $\calB$.
	
	The Simplex algorithm then starts with a feasible basis $\calB$ and then iteratively chooses a variable $x_i$ to enter the basis that has negative reduced cost $\overline{c}_i$.
	It then determines how far $x_i$ can be increased before a variable $x_j$ with $j\in\calB$ reaches $0$ and one such variable then leaves the basis.
	If $x_i$ can be increased arbitrarily, the problem is infeasible and if no variable with negative reduced cost remains, the solution is optimal.
	
	The parametric Simplex now uses the Simplex algorithm to find optimal bases for weighted sum scalarisations for increasing $\lambda$, see \cref{alg:parametric-simplex}.
	It starts by computing a lexicographically optimal basis with respect to $(\objective[2]{},\objective[1]{})$.
	We obtain reduced costs~$\overline{c}^1$ and $\overline{c}^2$, of which $\overline{c}^2\geq 0$.
	Note that the reduced costs for $\problempws{\lambda}$ are exactly $\lambda\overline{c}^1 + (1-\lambda)\overline{c}^2$.
	Thus, if $\overline{c}^1\geq 0$ holds as well, $x$ is also optimal for all larger $\lambda$ and the algorithm terminates.
	Otherwise it finds the first $\lambda$ after which a new entry of negative reduced cost is created, which becomes the entering variable and the leaving variable is computed as in the Simplex algorithm.
	\begin{algorithm}[tb]
		Let $\calB^0$ be a lexicographically optimal basis with respect to $(\objective[2]{},\objective[1]{})$\;
		$S = \Set{\calB^0}$, $k\algass 1$\;
		$\calI \algass \Set{i\notin \calB^0 : \overline{c}^1_i < 0}$\;
		\While{$\calI\neq \emptyset$}
		{
			$\lambda\algass\min\Set{\frac{-\overline{c}^1_i}{\overline{c}^2_i-\overline{c}^1_i} : i\in\calI}$ and $s\in\arg\min\Set{\frac{-\overline{c}^1_i}{\overline{c}^2_i-\overline{c}^1_i} : i\in\calI}$\;
			$r\in \arg\min\Set{j\in\calB^{k-1}: \frac{\tilde{b}_j}{\tilde{A}_{js}} : \tilde{A}_{js}>0}$\;
			$\calB^k \algass \calB^{k-1}\cup\Set{s}\setminus\Set{r}$, $\calI \algass \Set{i\notin \calB^{k} : \overline{c}^1_i < 0}$\;
			$S\algass S\cup\Set{\calB^k}$, $k\algass k+1$\;
		}
		\Return{$S$}
		\caption{The parametric Simplex algorithm.
		\label{alg:parametric-simplex}}
	\end{algorithm}
	
	To see that this is exactly \cref{alg:ESN-adj-based-generic}, we have already formulated it in terms of feasible bases (instead of basic feasible solutions), but every basis~$\calB$ comes with its basic solution $x_\calB$.
	We say two feasible bases $\calB$ and $\calB'$ are adjacent if they differ in exactly one element.
	Thus, \cref{alg:parametric-simplex} transitions between adjacent bases.
	
	To see that $\calB^k\in N^<(\calB^{k-1})$, we need to take a closer look at this set.
	The set $N^<(\calB^{k-1})$ contains those bases $\calB$ that differ from $\calB^{k-1}$ by one element and that satisfy $(c^1)\T x < (c^1)\T x^{k-1}$ where $x,\, x^{k-1}$ are the basic solutions corresponding to $\calB$ and $\calB^{k-1}$.
	Like in the Simplex algorithm, 
	\begin{equation}
		\label{eq:reduced-costs-simplex}
		(c^1)\T x = (c^1)\T x^{k-1} + (\overline{c}^1_\calN)\T x_\calN
	\end{equation}
	where $\calN \coloneqq \Set{1,\ldots,n}\setminus\calB$ and $\overline{c}$ are the reduced costs for $x^{k-1}$.
	
	If we assume that $\calB = \calB^{k-1}\cup\Set{i}\setminus\Set{j}$, then \eqref{eq:reduced-costs-simplex} becomes $(c^1)\T x = (c^1)\T x^{k-1} + \overline{c}^1_i x_i$, since the entering variable $i$ is the only variable in $\calN$ where $x$ has a positive value.
	Consequently, $x\in N^<(x^{k-1})$ if and only if $\overline{c}^1_i < 0$, by our non-degeneracy assumption.
	Thus, the parametric Simplex chooses an element $\calB^k$ in $N^<(x^{k-1})$ in each step and it is a special case of \cref{alg:ESN-adj-based-generic}.
	
	To obtain a proof of correctness from our framework, we need to verify that the condition of \cref{algorithm-generic-adjacency-alg-correctness} is satisfied.
	Hence, let $\lambda\in(0,1)$ and $\calB$ be an optimal basis for $\problempws{\lambda}$.
	We may assume there exists a basis $\calB'\in \feasibleset^<(\calB)$ that is also optimal for this weighted sum scalarisation and now need to show that such a basis exists which is also a neighbour.
	
	Let $x$ and $x'$ be the corresponding basic solutions.
	Since $(c^1)\T x' < (c^1)\T x$, we can use \eqref{eq:reduced-costs-simplex} to obtain
	\begin{displaymath}
		(c^1)\T x > (c^1)\T x' = (c^1)\T x + (\overline{c}^1_{\calN})\T x'_{\calN}.
	\end{displaymath}
	Thus, for some $i\in\calN$, $\overline{c}_i x'_i < 0$, that is, $\overline{c}_i < 0$ and $x'_i > 0$.
	The latter gives us $i\in \calB'\setminus\calB$.
	Let $\calB^*$ be a feasible basis obtained by letting $i$ enter the basis $\calB$ and $x^*$ be the corresponding basic solution.
	Then, by \eqref{eq:reduced-costs-simplex}, $\overline{c}_i < 0$, and the non-degeneracy assumption, $(c^1)\T x^* = (c^1)\T x + \overline{c}_i^1x^*_i < (c^1)\T x$.
	Thus, $\calB^*\in N^<(\calB)$.
	
	To complete the proof, we need to show that $\calB^*$ is optimal for $\problempws{\lambda}$.
	Let $c^\lambda\coloneqq \lambda c^1 + (1-\lambda) c^2$ and $\overline{c}^\lambda\coloneqq \lambda\overline{c}^1 + (1-\lambda)\overline{c}^2$.
	Since both $x$ and $x'$ are optimal for $c^\lambda$, we can conclude that
	\begin{displaymath}
		(c^\lambda)\T x 
		= (c^\lambda)\T x' 
		\stackrel{\eqref{eq:reduced-costs-simplex}}{=} (c^\lambda)\T x + (\overline{c}^\lambda_{\calN})\T x'_\calN
		= (c^\lambda)\T x + (\overline{c}^\lambda_{\calB'\setminus \calB})\T x'_{\calB'\setminus\calB}.
	\end{displaymath}
	By the optimality of $x$ for $c^\lambda$ and the non-degeneracy assumption, we get that $\overline{c}^\lambda_{\calN}\geq 0$, so $\overline{c}^\lambda_{\calB'\setminus\calB} = 0$.
	In particular, $\overline{c}_i^\lambda=0$ and $c^\lambda x^* \stackrel{\eqref{eq:reduced-costs-simplex}}{=} (c^\lambda)\T x + \overline{c}^\lambda_{i} x^*_i = (c^\lambda)\T x$.
	Thus, $\calB^*$ is optimal.
\end{example}

We end this section on a connectivity result for $D_{SE}$, which we can obtain by slightly strengthening the condition in \cref{algorithm-generic-adjacency-alg-correctness}.
\begin{theorem}
	\label{algorithm-sufficient-condition-for-connectivity-alg}
	If the condition from \cref{algorithm-generic-adjacency-alg-correctness} is met and the lexicographically optimal solutions for the ordering $(\objective[1]{},\objective[2]{})$ are weakly connected, then so is $D_{SE}$.
\end{theorem}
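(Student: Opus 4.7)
The plan is to show that every supported efficient solution is weakly connected in $D_{SE}$ to some lexicographically optimal solution with respect to $(\objective[1]{}, \objective[2]{})$. Combined with the hypothesised weak connectivity of that set, this immediately yields weak connectivity of $D_{SE}$: any two elements of $\feasibleset_{SE}$ can then be joined by first walking to the lex-opt set and then within it.

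To produce such a walk from a given $x \in \feasibleset_{SE}$, I would run \cref{alg:ESN-adj-based-generic} starting at $x^0 \coloneqq x$, using the extension to arbitrary supported efficient starting points (the analogue for \cref{alg:ESN-adj-based-generic} of \cref{algorithm-simple-alg-start-midway}, noted immediately after the proof of \cref{algorithm-generic-adjacency-alg-correctness}). This produces a sequence $x^0, x^1, \ldots, x^l$ of supported efficient solutions with $(x^{k-1}, x^k) \in \adjrel$ for every $k \geq 1$, hence an undirected walk in $D_{SE}$ from $x$ to $x^l$.

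The core step, and main obstacle, is arguing that the terminating $x^l$ is lex-opt with respect to $(\objective[1]{}, \objective[2]{})$. Termination only tells us that $N^<(x^l) = \emptyset$, so I need to upgrade this to $\feasibleset^<(x^l) = \emptyset$ using the hypothesis. Assuming for contradiction that $\feasibleset^<(x^l)$ is non-empty, I would pick an $x'$ in it attaining the maximum slope $\alpha_{x^l}$; efficiency of $x^l$ forces $\Delta_1(x', x^l) < 0$ and $\Delta_2(x', x^l) > 0$, so $\lambda^* \coloneqq \lambda(\alpha_{x^l}) \in (0,1)$. By \cref{algorithm-weight-set}, both $x^l$ and $x'$ lie in $\feasibleset_{\lambda^*}$, and applying the hypothesised condition at $(\lambda^*, x^l)$ delivers a successor of $x^l$ inside $\feasibleset^<(x^l) \cap \feasibleset_{\lambda^*}$, contradicting $N^<(x^l) = \emptyset$.

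Once $\feasibleset^<(x^l) = \emptyset$ is established, $x^l$ minimises $\objective[1]{}$ over $\feasibleset$, and efficiency of $x^l$ then forces it to also minimise $\objective[2]{}$ among the $\objective[1]{}$-minimisers, making it lex-opt with respect to $(\objective[1]{}, \objective[2]{})$. The subtle point in the argument is ensuring $\lambda^* \in (0,1)$ so that the adjacency hypothesis genuinely applies — everything else is routine bookkeeping using \cref{algorithm-weight-set} and the starting-point-agnostic version of \cref{alg:ESN-adj-based-generic}.
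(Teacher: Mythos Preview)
Your proposal is correct and follows the same route as the paper: start \cref{alg:ESN-adj-based-generic} at an arbitrary $x\in\feasibleset_{SE}$, observe that the resulting sequence stays in $\feasibleset_{SE}$ and consists of adjacent solutions, and argue that the terminal point $x^l$ is lexicographically optimal for $(\objective[1]{},\objective[2]{})$. Your treatment of the termination step---upgrading $N^<(x^l)=\emptyset$ to $\feasibleset^<(x^l)=\emptyset$ via the adjacency hypothesis and \cref{algorithm-weight-set}---is in fact more explicit than the paper's, which simply asserts $\feasibleset^<(x^l)=\emptyset$ by appeal to the ``both terminate at the same time'' claim buried in the proof of \cref{algorithm-generic-adjacency-alg-correctness}.
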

\begin{proof}
	Let $x\in\feasibleset_{SE}$ be an arbitrary point.
	By \cref{algorithm-generic-adjacency-alg-correctness,algorithm-simple-alg-start-midway}, \cref{alg:ESN-adj-based-generic} computes a sequence of adjacent solutions $x^0,\ldots,x^l$ such that $\outcomeset_{ESN}\cap\Set{y\in\RR^2 : y_2\geq \objective[2]{x^0}}\subseteq \objective{\Set{x^0,\ldots,x^l}}$.
	In particular, since for $x^l$ satisfies that $\feasibleset^<(x^l)=\emptyset$, it must be a lexicographically optimal solution for the ordering $(\objective[1]{},\objective[2]{})$.
	Since these solutions are weakly connected by assumption and every supported efficient solution is connected to one of them, $D_{SE}$ is connected.
\end{proof}

An alternative (and more natural) criterion requires the optimal solutions to each weighted sum scalarisation to be connected.
\begin{theorem}
	\label{algorithm-sufficient-condition-for-connectivity}
	Then the graph $D_{SE}$ is (weakly or strongly) connected if $D[\feasibleset_\lambda]$ is (weakly or strongly) connected for all $\lambda\in(0,1)$.
\end{theorem}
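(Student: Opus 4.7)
The plan is to argue this by a topological connectedness argument on the weight interval, leveraging \cref{algorithm-generic-adjacency-alg-correctness}'s condition only implicitly through the stronger assumption. Given two arbitrary supported efficient solutions $x,x' \in \feasibleset_{SE}$, I would pick weights $\lambda \in \Lambda(x) \cap (0,1)$ and $\lambda' \in \Lambda(x') \cap (0,1)$ (both exist since $x,x'$ are supported efficient) and assume without loss of generality that $\lambda \leq \lambda'$. Let $C_x$ be the connected component of $x$ in $D_{SE}$ (interpreted weakly or strongly, as desired). The aim is to establish $x' \in C_x$ by showing that the set
\begin{displaymath}
	\Lambda' \coloneqq \Set{\mu \in [\lambda,\lambda'] : \feasibleset_\mu \cap C_x \neq \emptyset}
\end{displaymath}
equals the entire interval $[\lambda,\lambda']$. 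The key observation glueing everything together is that, by the connectedness hypothesis on $D[\feasibleset_\mu]$, as soon as $\feasibleset_\mu$ intersects $C_x$ the whole of $\feasibleset_\mu$ must lie in $C_x$.

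To show $\Lambda'$ is open and closed in $[\lambda,\lambda']$, I would use the following swallowing property, which I expect to be the main technical step: for every $\mu \in (0,1)$ there is a $\delta > 0$ such that $\feasibleset_{\mu'} \subseteq \feasibleset_\mu$ for all $\mu' \in (\mu - \delta,\mu+\delta)\cap(0,1)$. This holds because $\outcomeset$ is finite, so the weight-set decomposition has only finitely many breakpoints; between breakpoints $\feasibleset_\mu$ is constant, and at a breakpoint the optimal set contains the limiting optimal sets from both sides. Openness of $\Lambda'$ is then immediate: if $\mu \in \Lambda'$ and $z \in \feasibleset_\mu \cap C_x$, connectedness of $D[\feasibleset_\mu]$ yields $\feasibleset_\mu \subseteq C_x$, so for $\mu'$ close to $\mu$ we have $\emptyset \neq \feasibleset_{\mu'} \subseteq \feasibleset_\mu \subseteq C_x$, giving $\mu' \in \Lambda'$. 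Closedness is symmetric: if $\feasibleset_\mu \cap C_x = \emptyset$, then $\feasibleset_{\mu'} \subseteq \feasibleset_\mu$ forces $\feasibleset_{\mu'} \cap C_x = \emptyset$ for nearby $\mu'$.

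Since $\lambda \in \Lambda'$ (as $x \in \feasibleset_\lambda \cap C_x$) and $[\lambda,\lambda']$ is connected, it follows that $\Lambda' = [\lambda,\lambda']$. In particular $\lambda' \in \Lambda'$, so $\feasibleset_{\lambda'} \cap C_x \neq \emptyset$, and one more appeal to the connectedness of $D[\feasibleset_{\lambda'}]$ gives $\feasibleset_{\lambda'} \subseteq C_x$; hence $x' \in C_x$. The weak/strong distinction plays no role in the argument beyond a consistent reading of $C_x$ as a connected component in the respective sense, so both versions of the theorem drop out simultaneously. The only mildly delicate point is making the swallowing property explicit, which is why I would state and prove it separately before running the openness and closedness argument.
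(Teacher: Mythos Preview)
Your argument is correct: the swallowing property $\feasibleset_{\mu'}\subseteq\feasibleset_\mu$ for $\mu'$ near $\mu$ does hold (it is exactly upper semi-continuity of the argmin map, which follows from finiteness of $\outcomeset$), and the open--closed argument on $[\lambda,\lambda']$ goes through for both the weak and strong readings of connectedness.

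That said, the paper's proof takes a more direct, purely combinatorial route. It invokes the finite weight-set decomposition with breakpoints $0<\lambda^1<\cdots<\lambda^k<1$, picks extreme-supported solutions $x^i$ with $\Lambda(x^i)=[\lambda^i,\lambda^{i+1}]$, observes that $x^i\in\feasibleset_{\lambda^i}\cap\feasibleset_{\lambda^{i+1}}$, and then simply notes that the finitely many connected pieces $D[\feasibleset_{\lambda^i}]$ overlap in a chain, so their union is connected; a cited result gives $\bigcup_i\feasibleset_{\lambda^i}=\feasibleset_{SE}$. Your topological packaging hides this chain behind the swallowing property, whose proof would anyway require the same breakpoint structure, so the paper's version is shorter and yields an extra observation for free: connectivity of $D[\feasibleset_\lambda]$ is only needed at the finitely many breakpoints $\lambda^1,\ldots,\lambda^k$, not on all of $(0,1)$. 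Your approach, on the other hand, is a bit more self-contained in that it does not need to quote the identity $\bigcup_i\feasibleset_{\lambda^i}=\feasibleset_{SE}$ explicitly, and its interval-connectedness template would transfer more readily to continuous parameter settings.
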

\begin{proof}
	By \cite[Corollary~3.6]{Bac26} or \cite[Proposition~6]{PGE10}, we know that $\Lambda$ is given by the union of intervals $[0,\lambda^1],[\lambda^1,\lambda^2],\ldots,[\lambda^k,1]$ with $0<\lambda^1<\ldots<\lambda^k<1$ that originate from extreme-supported efficient solutions, say $\Lambda(x^i) = [\lambda^i,\lambda^{i+1}]$ for $i\in\Set{1,\ldots,k-1}$ and $x^i\in\feasibleset_{ESE}$ (by \cref{prelims-ese-multiple-weights}).
	Note that $x^i\in \feasibleset_{\lambda^i}\cap \feasibleset_{\lambda^{i+1}}$.
	Since $D[\feasibleset_{\lambda^i}]$ is connected for all $i$, so is $\bigcup_{i=1}^k D[\feasibleset_{\lambda^i}]\subseteq D[\bigcup_{i=1}^k \feasibleset_{\lambda^i}]$.
	But $\bigcup_{i=1}^k \feasibleset_{\lambda^i}=\feasibleset_{SE}$ by \cite[Theorem~3.9]{Bac26}, from which the result follows.
\end{proof}
Note that it would have been sufficient to require $D[\feasibleset_\lambda]$ to be connected for $\lambda\in\Set{\lambda^1,\ldots,\lambda^k}$, we did not use connectivity for the remaining $\lambda$-values.
\section{A polynomial time adjacency-based \esa{} for {\normalfont\bmwb}}
\label{sec:matroid-alg}

In this section we want to make use of the generic \Cref{alg:ESN-adj-based-generic} to compute the extreme-supported non-dominated points for the minimum weight basis in a matroid in polynomial time.
Thus, for the remainder of this section, let $M$ be a matroid, where $M=(E,\calI)$, and let $D$ be the adjacency graph for the notion of adjacency defined in \cref{def:adj-matroids}.
We first show that for this notion of adjacency the requirements of \cref{algorithm-generic-adjacency-alg-correctness} are met and then discuss how to implement the algorithm in this special case to determine its running time.

\subsection{Applying Algorithm~\ref{alg:ESN-adj-based-generic}}

Using \cref{prelims-matroid-bases-swap-elements}, we can prove that the requirements of \cref{algorithm-generic-adjacency-alg-correctness} are met, giving us an \esa.
\begin{theorem}
	\label{matroid-alg-correctness}
	Let $B\in\feasibleset_\lambda$ such that $\feasibleset^<(B)\cap\feasibleset_\lambda\neq\emptyset$.
	Then there exists a basis $B'\in N^<(B)\cap\feasibleset_\lambda$.
	In particular, \cref{alg:ESN-adj-based-generic} is correct in this case.
\end{theorem}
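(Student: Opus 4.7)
The plan is induction on $k\coloneqq\abs{B\setminus B^*}$, where $B^*\in\feasibleset^<(B)\cap\feasibleset_\lambda$ is any witness of the hypothesis (so $c_\lambda(B^*)=c_\lambda(B)$ and $\objective[1]{B^*}<\objective[1]{B}$). The base case $k=1$ is immediate because then $B^*$ differs from $B$ in a single element, which already places it in $N^<(B)\cap\feasibleset_\lambda$.

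For the inductive step I would pick any $e\in B\setminus B^*$ and apply \cref{prelims-matroid-bases-swap-elements} to obtain an $e'\in B^*\setminus B$ with both $B-e+e'$ and $B^*-e'+e$ being bases of $M$. The crucial intermediate observation is that $c_\lambda(e)=c_\lambda(e')$: the $\lambda$-optimality of $B$ gives $c_\lambda(B-e+e')\geq c_\lambda(B)$ and hence $c_\lambda(e')\geq c_\lambda(e)$, while the analogous argument for $B^*$ yields the reverse inequality. Consequently, both swapped bases are again $\lambda$-optimal.

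The induction then splits on the sign of $c_1(e')-c_1(e)$. If $c_1(e')<c_1(e)$, then $B'\coloneqq B-e+e'$ is adjacent to $B$, belongs to $\feasibleset_\lambda$ by the previous step, and satisfies $\objective[1]{B'}<\objective[1]{B}$, so $B'\in N^<(B)\cap\feasibleset_\lambda$ and we are done. Otherwise $c_1(e')\geq c_1(e)$, and I pass to $\tilde{B}^*\coloneqq B^*-e'+e$, which is $\lambda$-optimal, satisfies $\objective[1]{\tilde{B}^*}\leq\objective[1]{B^*}<\objective[1]{B}$, and---by a short set-theoretic check using $e\in B\setminus B^*$ and $e'\in B^*\setminus B$---has $\abs{B\setminus\tilde{B}^*}=k-1$. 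The inductive hypothesis applied to $(B,\tilde{B}^*)$ then supplies the desired neighbour. The ``in particular'' assertion is then immediate from \cref{algorithm-generic-adjacency-alg-correctness}.

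The main subtlety I anticipate is identifying $k=\abs{B\setminus B^*}$ as the right quantity to induct on and exploiting the symmetric form of \cref{prelims-matroid-bases-swap-elements}: its power lies in delivering simultaneously a candidate improvement $B-e+e'$ and a replacement witness $B^*-e'+e$ closer to $B$, so that a single Brualdi invocation always either solves the problem or strictly reduces the induction parameter.
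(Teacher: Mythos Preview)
Your proof is correct and follows essentially the same approach as the paper: both hinge on \cref{prelims-matroid-bases-swap-elements} and the observation that $c_\lambda(e)=c_\lambda(e')$ forces both swapped bases back into $\feasibleset_\lambda$. The only cosmetic difference is packaging: the paper takes a witness $B'$ minimising $\abs{B\setminus B'}$ and derives a contradiction unless $\abs{B\setminus B'}=1$, whereas you unwind the same descent as an explicit induction on $k=\abs{B\setminus B^*}$.
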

\begin{proof}
	Let $\lambda$ and $B$ be as in the claim and let $B'$ be a basis in $\feasibleset^<(B)\cap\feasibleset_\lambda$ that minimises $\abs{B\setminus B'}$.
	We show that $\abs{B\setminus B'} = 1$, which yields $B'\in N^<(B)\cap\feasibleset_\lambda$.
	
	Let $e\in B\setminus B'$.
	By \cref{prelims-matroid-bases-swap-elements} we can find an element $e'\in B'$ such that $B_1\coloneqq(B-e)+e'$ and $B_2\coloneqq(B'-e')+e$ are bases.
	Since $B$ and $B'$ are optimal with respect to the costs $c_\lambda(e)$, we can conclude that $c_\lambda(e) = c_\lambda(e')$:
	otherwise, either $B_1$ or $B_2$ would have a better objective value than $B$ or $B'$.
	Hence, $B_1$ and $B_2$ are in $\feasibleset_\lambda$.
	
	We now note that $B_2$ differs from $B$ is one fewer element than $B'$ does.
	So by minimality of $\abs{B\setminus B'}$, $B_2\notin\feasibleset^<(B)$ and $c_1(e) > c_1(e')$.
	Hence, $c_1(B_1) < c_1(B)$ and $B_1\in\feasibleset^<(B)\cap\feasibleset_\lambda$, making it a possible choice for $B'$, so $\abs{B\setminus B'} = 1$.
\end{proof}

Before we implement the algorithm for matroids, let us briefly remark that we can also prove connectivity of the supported efficient solutions easily in this case, giving us an alternative proof of \cite[Theorem~4]{Ehr96}.
\begin{theorem}
	The graph $D_{SE}$ is connected.
\end{theorem}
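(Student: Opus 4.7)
The plan is to invoke \cref{algorithm-sufficient-condition-for-connectivity}: it suffices to show that $D[\feasibleset_\lambda]$ is connected for every $\lambda\in(0,1)$. Since the adjacency relation on bases is symmetric, $D$ is undirected and there is no distinction between weak and strong connectivity.

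Fix $\lambda\in(0,1)$ and two bases $B,\,B'\in\feasibleset_\lambda$. I would prove by induction on $k\coloneqq\abs{B\setminus B'}$ that $B$ and $B'$ are connected in $D[\feasibleset_\lambda]$. The base case $k=0$ is trivial. For $k\geq 1$, pick any $e\in B\setminus B'$ and apply \cref{prelims-matroid-bases-swap-elements} to obtain $e'\in B'$ such that both $B_1\coloneqq (B-e)+e'$ and $B_2\coloneqq (B'-e')+e$ are bases. Since $B$ and $B'$ are both $c_\lambda$-optimal, neither $B_1$ nor $B_2$ can have strictly smaller $c_\lambda$-value, which yields $c_\lambda(e')\geq c_\lambda(e)$ and $c_\lambda(e)\geq c_\lambda(e')$, forcing $c_\lambda(e)=c_\lambda(e')$. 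Hence $B_1\in \feasibleset_\lambda$, and $B_1$ is adjacent to $B$ in $D$ because $\abs{B\setminus B_1}=1$. Since $\abs{B_1\setminus B'}=k-1$, the induction hypothesis yields a path from $B_1$ to $B'$ inside $D[\feasibleset_\lambda]$, completing the connection.

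The argument is essentially a reuse of the swap-preservation idea already established in the proof of \cref{matroid-alg-correctness}, just without the additional restriction to $\feasibleset^<(B)$. The only place one has to be careful is in justifying $c_\lambda(e)=c_\lambda(e')$: the key observation is that \emph{both} $B$ and $B'$ are optimal, so a strict inequality in either direction would be contradicted by the corresponding swapped basis $B_1$ or $B_2$. There is no real obstacle here; once \cref{algorithm-sufficient-condition-for-connectivity} is in hand, the proof reduces to this short inductive exchange argument, which also recovers \textcite{Ehr96}'s theorem as promised.
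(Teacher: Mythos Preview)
Your proof is correct. You invoke \cref{algorithm-sufficient-condition-for-connectivity} and then show directly that $D[\feasibleset_\lambda]$ is connected for every $\lambda\in(0,1)$ via the strong exchange property, inducting on $\abs{B\setminus B'}$. The one implicit step---that the element $e'$ produced by \cref{prelims-matroid-bases-swap-elements} actually lies in $B'\setminus B$ (so that $\abs{B_1\setminus B'}$ genuinely drops by one)---follows because $(B-e)+e'$ must have rank $r$, but you could make this explicit.

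The paper takes a neighbouring but not identical route: it invokes \cref{algorithm-sufficient-condition-for-connectivity-alg} together with \cref{matroid-alg-correctness}, which reduces the problem to showing that the \emph{lexicographically optimal} bases are connected, and then argues this via the Greedy algorithm (any lex-optimal basis arises from some tie-breaking order, and adjacent transpositions of equal-weight elements connect them). The paper does remark afterwards that the same Greedy/transposition argument, applied to each $c_\lambda$, would also feed into \cref{algorithm-sufficient-condition-for-connectivity}. Your approach bypasses the Greedy viewpoint entirely and reuses the exchange argument from \cref{matroid-alg-correctness} directly; this is arguably cleaner and more self-contained, at the cost of not illustrating how the algorithmic criterion \cref{algorithm-sufficient-condition-for-connectivity-alg} gets used.
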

\begin{proof}
	By \cref{algorithm-sufficient-condition-for-connectivity-alg,matroid-alg-correctness} it suffices to show that the lexicographically optimal bases are connected.
	But since any such basis can be obtained by the Greedy algorithm (that sorts elements of identical weight such that the basis elements come first), these are connected.
	Simply exchange pairs of elements of identical weight to transform one sorting into the other.
\end{proof}
Note that the argument suffices to use \cref{algorithm-sufficient-condition-for-connectivity} as well, when applied to all orderings $c_\lambda$.

\subsection{Implementing the algorithm}
But what is the running time of the algorithm?
For the analysis, recall that we denote the time to perform an independence check in $M$ by $\tindep$.
Additionally, let $r\coloneqq\rank(M)$ and $m\coloneqq \abs{E}$.

To obtain a solution that is lexicographically optimal, we need to run the Greedy algorithm on the lexicographic sorting, resulting in a running time of $\O{m\cdot(\log m+\tindep)}$.
The remaining time of the algorithm is spent on finding an element $B'\in N^<(B)$ that maximises the slope $s(B',B)$ in each iteration.
Note that a neighbour $B'$ in $N(B)$ is of the form $(B-e)+e'$.
Since $c_1(B) > c_1(B')$, we must have $c_1(e) > c_1(e')$ and we can test all $\O{(m-r)r}$ such pairs.
This yields a running time of $\O{mr\cdot\tindep}$ per iteration.

We remark that this is an approach that works generically, but it might be suboptimal.
An alternative approach is to determine the circuit in $B+e'$, for $e'\notin B$.
Then the potential elements in $B$  we can swap $e'$ with are exactly the elements $e\in C\setminus\Set{e'}$ with $c_1(e) > c_1(e')$.
We can find a circuit in $\O{r\cdot\tindep}$ by removing the elements of $B$ from $B+e'$ if the remaining set is still dependent.
This implementation directly yields our original running time, but we benefit if this can be done faster than $\O{r\cdot\tindep}$.

One example where this is the case is the graphic matroid.
Here, $\tindep\in\O{n}$, but this is also the time required to find the cycle in $T+e'$.
In this way, we could reduce the time needed for enumerating the neighbourhood to $\O{mn}$ from $\O{mn^2}$.
But, for simplicity, we shall restrict the analysis to just the generic running time of $\tindep$.

It remains to bound the number of iterations.
The basic idea is that, when we transition from a basis $B^{k-1}$ to a neighbour $B^k = (B^{k-1}-e)+e'$, then $c_{\lambda^k}(e) = c_{\lambda^k}(e')$, since both bases are in $\feasibleset_{\lambda^k}$ by \cref{algorithm-properties-of-simple-alg}.
So such swaps can only occur for a specific value of $\lambda^k$, which only increases during the course of our algorithm.
A certain swap may occur multiple times, but since we exchange an element $e$ of larger $c_1$-value with an element $e'$ of smaller value, we can still strictly decrease the number of potential swaps that remain in each iteration.
Since these are initially bounded by all possible swaps, the iterations are bounded by $\O{m^2}$.
The following notation and theorem formalises this idea.
\begin{notation}
	Let $\calP\coloneqq\Set{(e,f)\in E^2 : c_1(e) > c_1(f),\, c_2(e) < c_2(f)}$.
	Let $e,\, f\in E$ be two fixed elements with $c_1(e) \geq c_1(f)$.
	Then, $c_\lambda(e)$ and $c_\lambda(f)$ are linear functions in $\lambda$, which intersect in exactly one point $\lambda(e,f)\in(0,1)$ if and only if $(e,f)\in\calP$.
	We write $\calE$ for the set $\Set{\lambda(e,f) : (e,f)\in\calP}$.
\end{notation}

\begin{theorem}
	\Cref{alg:ESN-adj-based-generic} applied to a matroid $M$ terminates after at most $\O{m^2}$ iterations of the while loop.
\end{theorem}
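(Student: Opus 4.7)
The plan is to assign a combinatorial potential to the current basis that strictly decreases with each iteration, while being bounded above by a quantity in $\O{m^2}$; the total number of iterations is then at most this bound.

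First, I would observe that, at each iteration $k$, the new basis $B^k = (B^{k-1} - e_k) + e'_k$ lies in $\feasibleset^<(B^{k-1})$, so $\objective[1]{B^k} < \objective[1]{B^{k-1}}$, which immediately forces $c_1(e_k) > c_1(e'_k)$.  By \cref{algorithm-properties-of-simple-alg}, $B^{k-1}$ and $B^k$ are both optimal for some $\lambda^k \in (0,1)$, so $c_{\lambda^k}(e_k) = c_{\lambda^k}(e'_k)$, which additionally gives $c_2(e_k) < c_2(e'_k)$ and hence $(e_k,e'_k)\in\calP$; only the $c_1$-inequality is needed for what follows.

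Next, I would sort the elements of $E$ by $c_1$ (breaking ties arbitrarily) and let $\rho(e)\in\Set{1,\ldots,m}$ denote the position of $e$ in that order, so that $\rho$ is monotone in $c_1$.  With $\Phi(B) \coloneqq \sum_{e\in B}\rho(e)$, the inequality $\rho(e_k) > \rho(e'_k)$ gives
\begin{displaymath}
    \Phi(B^k) - \Phi(B^{k-1}) = \rho(e'_k) - \rho(e_k) \leq -1,
\end{displaymath}
so $\Phi$ strictly decreases by at least one in every iteration.  Since $\Phi(B)$ is the sum of $r$ distinct values from $\Set{1,\ldots,m}$, it lies in the interval $[\,r(r+1)/2,\, rm - r(r-1)/2\,]$, whose length is $r(m-r) \leq m^2/4$.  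The number of iterations is therefore at most $r(m-r)$, which is $\O{m^2}$.

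The main obstacle, if any, would be recognising that the finer algorithmic structure (the slope-maximising choice, the monotone sequence of $\lambda^k$, or the fact that the same pair in $\calP$ may be swapped more than once) is irrelevant to bounding the iteration count --- only the coarse monotonicity $c_1(e_k) > c_1(e'_k)$ matters.  A slightly more bookkeeping-heavy alternative is to use the potential $\abs{\Set{(e,f)\in\calP : e\in B,\, f\notin B}} \leq \abs{\calP} \leq \binom{m}{2}$, verifying via a case analysis that the swap $(e_k,e'_k)$ decreases this quantity by at least one; both potentials give the same $\O{m^2}$ bound.
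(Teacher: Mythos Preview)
Your main argument via the rank-sum potential $\Phi(B)=\sum_{e\in B}\rho(e)$ is correct and is genuinely different from, and more elementary than, the paper's proof. The paper instead tracks what it calls \emph{$k$-feasible pairs}: a pair $(e,f)\in\calP$ is $k$-feasible if $\lambda^k<\lambda(e,f)$, or if $\lambda^k=\lambda(e,f)$ with $e\in B^k$ and $f\notin B^k$. It then shows, by a case analysis exploiting both the monotonicity of $\lambda^k$ and the fact that the swap at step $k$ occurs exactly at $\lambda(e_k,e'_k)=\lambda^k$, that each iteration destroys at least one $(k-1)$-feasible pair and creates no new one. Your potential bypasses all of this parametric structure: as you correctly isolate, the single inequality $c_1(e_k)>c_1(e'_k)$ already forces a strict drop, and you extract the sharper bound $r(m-r)\le m^2/4$ rather than the paper's $\binom{m}{2}$.

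One caveat on your closing remark: the alternative potential $\abs{\{(e,f)\in\calP: e\in B,\ f\notin B\}}$ is close in spirit to the paper's device, but the paper's $\lambda$-filter is doing real work. Without it, a swap can \emph{create} pairs of this form --- for instance $(e,e_k)$ with $e\in B^k$, or $(e'_k,f)$ with $f\notin B^k$ --- and the bare case analysis you allude to does not obviously yield a net decrease. The paper's refinement (restricting to pairs whose crossing point is at or beyond the current $\lambda^k$) is precisely what makes that bookkeeping close. Your first potential needs none of this, which is its main advantage.
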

\begin{proof}
	Let $B^0,\ldots,B^l$ be the bases computed by \Cref{alg:ESN-adj-based-generic} applied to $M$.
	Let $(e,f)\in\calP$.
	We call the pair $(e,f)$ \emph{$k$-feasible} if $\lambda^k < \lambda(e,f)$ or if $\lambda^k=\lambda(e,f)$, $e\in B^k$, and $f\notin B^k$.
	Initially, for $k=0$, there are at most $\O{m^2}$ $k$-feasible pairs and we now prove that each iteration strictly reduces this number.
	
	To do so, let $k\in\Set{1,\ldots,l}$ and $B^k = B^{k-1} - e + f$, where $c_1(e) > c_1(f)$ since $B'\in\feasibleset^<(B)$ and, since $B'$ is efficient, $c_2(f) > c_2(e)$.
	By \cref{algorithm-properties-of-simple-alg}, $c_{\lambda^k}(e) = c_{\lambda^k}(f)$, since both bases are in $\feasibleset_{\lambda^k}$, so $\lambda(e,f) = \lambda^k\geq \lambda^{k-1}$ and $\Set{e,f}$ is a $(k-1)$-feasible pair but not $k$-feasible.
	We now show that we can assign a distinct $(k-1)$-feasible pair to every $k$-feasible pair, without using $\Set{e,f}$.
	
	Let $(e',f')$ be a $k$-feasible pair.
	If $\lambda(e',f') > \lambda^k$, then this pair is also $(k-1)$-feasible.
	Thus, we may assume that $\lambda^k = \lambda(e',f')$, $e'\in B^k$ and $f'\notin B^{k}$.
	If $e'\in B^{k-1}$ and $f'\notin B^{k-1}$, then again $(e',f')$ is $(k-1)$-feasible.
	
	This leaves the case that $e'\in B^k\setminus B^{k-1} = \Set{f}$ or $f'\in B^{k-1}\setminus B^k = \Set{e}$.
	In either case, the elements $e,\, f,\, e',\, f'$ all have the same $c_{\lambda^k}$-value.
	If $e' = f$, then $c_1(e) > c_1(f) = c_1(e') > c_1(f')$, so $f'\neq e$ and $f'\notin B^{k-1}$.
	In particular, the pair $(e,f')$ satisfies $\lambda(e,f') = \lambda^k$ and $e\in B^{k-1}\setminus B^k$, $f'\notin B^{k-1}$.
	Thus, the pair $(e,f')$ is $(k-1)$-feasible but not $k$-feasible.
	
	The case $f'=e$ is analogous.
	Here, $c_1(f) < c_1(e) = c_1(f') < c_1(e')$, so $e'\neq f$ and $e'\in B^{k-1}$.
	This time, the pair $(e',f)$ satisfies $\lambda(e',f) = \lambda^k$ and $e'\in B^{k-1}$, $f\in B^k\setminus B^{k-1}$ and the pair is $(k-1)$-feasible but not $k$-feasible.
\end{proof}

With this bound on the number of iterations, we obtain the following result.
\begin{corollary}
	\Cref{alg:ESN-adj-based-generic} can be implemented to run in $\O{m^3r\cdot\tindep}$ time on matroids.
\end{corollary}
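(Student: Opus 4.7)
The plan is to simply combine the three running-time components already identified in the preceding discussion: the cost of initialisation, the cost per iteration, and the bound on the number of iterations.

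First, I would bound the initialisation step. Computing a solution that is lexicographically optimal with respect to $(\objective[2]{},\objective[1]{})$ can be done by the Greedy algorithm applied to the lexicographic ordering of $E$, which runs in $\O{m(\log m + \tindep)}$ time as noted after \cref{alg:Greedy}.

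Next, I would argue the per-iteration cost. In iteration $k$ we need to find an element of $N^<(B^{k-1})$ maximising $s(\cdot, B^{k-1})$. As discussed, every neighbour of $B^{k-1}$ in $D$ has the form $(B^{k-1}-e)+e'$ with $e\in B^{k-1}$, $e'\notin B^{k-1}$, and can lie in $\feasibleset^<(B^{k-1})$ only if $c_1(e) > c_1(e')$. There are at most $r(m-r) = \O{mr}$ such pairs, each requiring a single independence check of $B^{k-1}-e+e'$ in time $\O{\tindep}$ and the computation of a slope in constant time; taking the maximum over them fits within $\O{mr\cdot\tindep}$ per iteration.

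Finally, I would plug in the iteration bound from the preceding theorem, which gives at most $\O{m^2}$ iterations of the while loop. Multiplying yields a total cost of
\begin{equation*}
    \O{m(\log m + \tindep)} + \O{m^2}\cdot\O{mr\cdot\tindep} = \O{m^3 r\cdot\tindep},
\end{equation*}
since the initialisation term is absorbed by the main term. There is no real obstacle here; the only choice worth flagging is that we stick with the generic neighbourhood-enumeration strategy rather than the circuit-based refinement mentioned in the text, since the latter only improves the bound for specific matroid classes such as graphic matroids.
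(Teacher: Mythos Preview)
Your proposal is correct and matches the paper's approach exactly: the paper simply states the corollary as an immediate consequence of combining the $\O{mr\cdot\tindep}$ per-iteration cost and $\O{m(\log m+\tindep)}$ initialisation cost (both established in the preceding discussion) with the $\O{m^2}$ iteration bound from the theorem just proved. No additional argument is given or needed.
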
\section{Tailoring an algorithm to {\normalfont\bmwb}}
\label{sec:tailored-alg}

Again, let $M$ be a matroid with $M=(E,\calI)$.
In this section, we design a faster \esa{} for the bi-objective minimum weight basis problem.
To this end, we shall make use of the Greedy algorithm:
the original lexicographic solution is obtained by running Greedy on the lexicographically sorted edges.
We then saw that edge exchanges happen when edges have the same weight.
By sorting the points $\lambda$ where two edges have the same weight, the adjacency-based algorithm traversed solutions that were in $\feasibleset_\lambda$ until it reached the next extreme point and $\lambda$ increased.
We shall prove here that the extreme points correspond to two specific orderings of the edges:
the first extreme point our algorithm found corresponds to the ordering where edges of equal $c_\lambda$-value are ordered by non-increasing $c_1$-value, while the second extreme point orders by non-decreasing $c_1$-value.
In this way, we can jump directly to the next extreme point.

The following definition provides us with convenient notation to formulate the resulting \cref{alg:ESN-matroid-specific}.
\begin{definition}
	For $\lambda\in\calE$, we define
	\begin{displaymath}
		E_\lambda\coloneqq\bigcup_{\substack{(e,f)\in\calP:\\ \lambda(e,f) = \lambda}}\Set{e,f}.
	\end{displaymath}
\end{definition}
\begin{algorithm}[tb]
	Let $\lambda^1<\ldots<\lambda^s$ be the values in $\calE$\;
	Let $B^0$ be a lexicographically optimal solution with respect to $(\objective[2]{},\objective[1]{})$\;
	$S\algass\Set{B^0}$\;
	\For{$k=1,\ldots,s$}
	{
		$B^k\algass B^{k-1}\setminus E_{\lambda^{k}}$\;
		Let $e^1,\ldots,e^t$ be the elements in $E_{\lambda^{k}}$, sorted lexicographically by $(c_{\lambda^k},c_1)$ in non-descending order\;
		\For{$i=1,\ldots,t$}
		{
			\If{$B^k+e^i$ is independent}{$B^k\algass B^k + e^i$}
		}
		$S\algass S\cup\Set{B^k}$, $k\algass k+1$\; 
	}
	\Return{$S$}
	\caption{An algorithm that computes a representation of $\outcomeset_{ESN}$ for matroids.}
	\label{alg:ESN-matroid-specific}
\end{algorithm}

\subsection{Proving correctness of Algorithm~\ref{alg:ESN-matroid-specific}}
To show that this algorithm works, some more notation is needed.
\begin{definition}
	Let $\calS$ be an ordering of the elements of a matroid $M$.
	We write $B_M(\calS) = B(\calS)$ for the bases obtained by running the Greedy algorithm for this ordering.
	For $\lambda\in[0,1]$, we write $\calS_\lambda$ for a non-descending ordering with respect to $c_\lambda$.
	The two special cases, where the ordering is lexicographic with respect to $(c_\lambda, c_1)$ and $(c_\lambda, -c_1)$, are denoted by $\calS_\lambda^\uparrow$ and $\calS_\lambda^\downarrow$.
\end{definition}
Note that elements of equal cost can always be ordered arbitrarily, but we assume the orderings we regard from now on do this consistently (as does the algorithm).
As a result, orderings $\calS_\lambda$ can only differ in their ordering of the elements in $E_\lambda$.
We start by showing that $S_{\lambda^k}^\uparrow = S_{\lambda^{k+1}}^\downarrow$.
\begin{observation}
	\label{spec-alg-orderings-element-swaps}
	Let $e,\, f\in E$ with $c_1(e)\geq c_1(f)$.
	Additionally, let $\calS_\lambda$ and $\calS_{\lambda'}$ be two orderings with $0\leq \lambda < \lambda' < 1$.
	
	If $(e,f)\in\calP$, then
	\begin{align*}
		c_\lambda(e) &< c_\lambda(f) && \text{ for all } \lambda < \lambda(e,f),\\
		c_\lambda(e) &> c_\lambda(f) && \text{ for all } \lambda > \lambda(e,f),\\
		c_\lambda(e) &= c_\lambda(f) && \text{ at } \lambda = \lambda(e,f).
	\end{align*}
	The ordering of $e$ and $f$ is consistent in $\calS_\lambda$ and $\calS_{\lambda'}$ unless the intersection point $\lambda(e,f)$ lies between $\lambda$ and $\lambda'$, that is, unless $\lambda\leq \lambda(e,f) \leq \lambda'$.
	
	If $(e,f)\notin\calP$, then $c_1(e) = c_1(f)$ or $c_1(e) > c_1(f)$ and $c_2(e) \geq c_2(f)$.
	In the first case, $c_\mu(e)$ and $c_\mu(f)$ intersect at $1$.
	Thus, $c(e) = c(f)$ or $c_\mu(e)$ and $c_\mu(f)$ do not intersect on $[0,1)$, meaning their ordering in $\calS_\lambda$ and $\calS_{\lambda'}$ coincides, by our assumption on the consistent ordering of elements with identical cost.
	This is true in the last missing case as well, where $c_1(e) > c_1(f)$ and $c_2(e) \geq c_2(f)$, since here the cost functions $c_\mu(e)$ and $c_\mu(f)$ do not intersect at all.
	
	In summary, elements $e$ and $f$ are ordered consistently by $\calS_\lambda$ and $\calS_{\lambda'}$ unless $(e,f)\in\calP$ and $\lambda\leq \lambda(e,f) \leq \lambda'$.
\end{observation}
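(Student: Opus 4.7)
The key object is the affine function
\begin{equation*}
\delta(\mu) \coloneqq c_\mu(e) - c_\mu(f) = \mu\bigl(c_1(e)-c_1(f)\bigr) + (1-\mu)\bigl(c_2(e)-c_2(f)\bigr),
\end{equation*}
which encodes, for every $\mu\in[0,1]$, how $e$ and $f$ compare under $c_\mu$. All three claimed inequalities, as well as the consistency statements, are statements about the sign pattern of $\delta$ on the interval~$[0,1]$. My plan is therefore to analyse $\delta$ by cases on the hypotheses $(e,f)\in\calP$ and $(e,f)\notin\calP$, and then translate the sign information into statements about orderings using the fact that elements of identical $c_\mu$-value are broken consistently by assumption.

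For the first part, assume $(e,f)\in\calP$, so $c_1(e)>c_1(f)$ and $c_2(e)<c_2(f)$. Then $\delta(0)<0$ and $\delta(1)>0$, and by linearity $\delta$ has a unique zero in $(0,1)$; solving $\delta(\mu)=0$ gives exactly the value $\mu=\lambda(e,f)$ defined in the notation above. The three displayed (in)equalities in the statement follow immediately from the sign of a linear function to the left of, at, and to the right of its unique root. For the consistency part, observe that the relative order of $e$ and $f$ in any $\calS_\mu$ is determined by the sign of $\delta(\mu)$ (with ties resolved consistently by assumption), so their order in $\calS_\lambda$ and $\calS_{\lambda'}$ disagrees only if $\delta$ changes sign between $\lambda$ and $\lambda'$, which is equivalent to $\lambda\leq \lambda(e,f)\leq \lambda'$.

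For the second part, assume $(e,f)\notin\calP$ while $c_1(e)\geq c_1(f)$. If $c_1(e)=c_1(f)$, then $\delta(\mu)=(1-\mu)(c_2(e)-c_2(f))$, which either vanishes identically (when $c_2(e)=c_2(f)$, so $c(e)=c(f)$) or is nonzero on $[0,1)$ with constant sign; in both situations the ordering of $e$ and $f$ is forced to be identical in $\calS_\lambda$ and $\calS_{\lambda'}$ by the consistent tie-breaking convention. If instead $c_1(e)>c_1(f)$, then $(e,f)\notin\calP$ forces $c_2(e)\geq c_2(f)$; here $\delta(\mu)$ is a strictly positive linear function on $(0,1]$ (with $\delta(0)\geq 0$), so $e$ appears after $f$ in every $\calS_\mu$ with $\mu>0$, again yielding consistency.

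The only genuine subtlety is bookkeeping the boundary behaviour at $\mu=0$ and at coincident-cost endpoints, which is handled uniformly by the consistent tie-breaking assumption stated just before the observation; everything else is a one-line computation on the affine function $\delta$. I therefore expect no real obstacle, and the main task is to present the case analysis cleanly so that all four conclusions of the observation follow from a single sign-of-$\delta$ argument.
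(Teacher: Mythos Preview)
Your proposal is correct and is essentially the same argument as the paper's: the observation in the paper is self-contained (it carries its own justification inline rather than having a separate proof), and your introduction of the affine function $\delta(\mu)=c_\mu(e)-c_\mu(f)$ simply makes explicit the sign analysis that the paper describes verbally. The case split and the handling of ties via the consistent-ordering convention match the paper's reasoning exactly.
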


\begin{lemma}
	\label{spec-alg-orderings-for-consecutive-lambdas}
	Let $\lambda^1<\ldots<\lambda^s$ be the values in $\calE$, and $\lambda^0\coloneqq0$.
	Then, for $k\in\Set{1,\ldots,s}$, $\calS_{\lambda^{k-1}}^\uparrow = \calS_{\lambda^{k}}^\downarrow$.
\end{lemma}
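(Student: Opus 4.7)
The plan is to introduce an intermediate value $\mu \in (\lambda^{k-1}, \lambda^{k})$ and show that both $\calS_{\lambda^{k-1}}^\uparrow$ and $\calS_{\lambda^{k}}^\downarrow$ coincide with any non-descending ordering $\calS_\mu$. Since $\lambda^{k-1}$ and $\lambda^{k}$ are consecutive elements of $\calE$ (with $\lambda^0 = 0$ also lying strictly below $\lambda^1$), no value $\lambda(e,f)$ lies in the open interval $(\lambda^{k-1},\lambda^{k})$. By \cref{spec-alg-orderings-element-swaps}, this means that at $\mu$ the only ties in $c_\mu$ come from pairs $(e,f)$ with $c(e) = c(f)$, which we order consistently by convention. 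So $\calS_\mu$ is uniquely determined, and it suffices to show that both of the claimed orderings equal~$\calS_\mu$.

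First I would compare $\calS_{\lambda^{k-1}}^\uparrow$ with $\calS_\mu$. By \cref{spec-alg-orderings-element-swaps}, two elements $e,f$ are ordered consistently in $\calS_{\lambda^{k-1}}$ and $\calS_\mu$ unless $(e,f)\in\calP$ with $\lambda(e,f)$ lying between $\lambda^{k-1}$ and $\mu$. Since $\mu < \lambda^{k}$, this can only happen for $\lambda(e,f) = \lambda^{k-1}$. For such a pair with $c_1(e) > c_1(f)$: in $\calS_{\lambda^{k-1}}^\uparrow$ the tie in $c_{\lambda^{k-1}}$ is broken lexicographically by $c_1$, putting $f$ before $e$; in $\calS_\mu$, because $\mu > \lambda(e,f)$ we have $c_\mu(e) > c_\mu(f)$, so again $f$ precedes $e$. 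Hence $\calS_{\lambda^{k-1}}^\uparrow$ agrees with $\calS_\mu$ on every pair.

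Symmetrically, I would compare $\calS_{\lambda^{k}}^\downarrow$ with $\calS_\mu$. The orderings can disagree only on pairs $(e,f)\in\calP$ with $\lambda(e,f) = \lambda^{k}$. For such a pair with $c_1(e) > c_1(f)$: in $\calS_{\lambda^{k}}^\downarrow$ the tie in $c_{\lambda^{k}}$ is broken by $-c_1$, so $e$ precedes $f$; in $\calS_\mu$, because $\mu < \lambda(e,f)$ we have $c_\mu(e) < c_\mu(f)$, so $e$ again precedes $f$. Combining this with the previous paragraph yields $\calS_{\lambda^{k-1}}^\uparrow = \calS_\mu = \calS_{\lambda^{k}}^\downarrow$, as desired.

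The work is entirely bookkeeping: the substance sits in \cref{spec-alg-orderings-element-swaps}. The main thing to get right is the direction of the tie-breaking, in particular noting that at $\lambda(e,f)$ itself (with $c_1(e) > c_1(f)$) the lines $c_\mu(e)$ and $c_\mu(f)$ cross, so the role of \enquote{smaller $c_1$ first} flips precisely when one moves past $\lambda(e,f)$; the lexicographic refinements $(c_{\lambda^{k-1}}, c_1)$ and $(c_{\lambda^{k}}, -c_1)$ are calibrated exactly to match the ordering that $\calS_\mu$ produces on either side of these intersection points.
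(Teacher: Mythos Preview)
Your proof is correct and follows essentially the same approach as the paper: both arguments reduce to \cref{spec-alg-orderings-element-swaps} and then check the two boundary cases $\lambda(e,f)=\lambda^{k-1}$ and $\lambda(e,f)=\lambda^{k}$ by hand, verifying that the lexicographic tie-breakers are calibrated correctly. The only difference is presentational: the paper compares $\calS_{\lambda^{k-1}}^\uparrow$ and $\calS_{\lambda^{k}}^\downarrow$ directly, whereas you route both through the (unique) ordering $\calS_\mu$ at an intermediate point~$\mu$, which makes the underlying intuition (\enquote{both describe the ordering just between the two breakpoints}) slightly more explicit.
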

\begin{proof}
	Let $e,\, f\in E$ with $c_1(e)\geq c_1(f)$.
	By \cref{spec-alg-orderings-element-swaps}, since $0\leq\lambda^{k-1} < \lambda^{k} < 1$, we know that $\calS_{\lambda^{k-1}}^\uparrow$ and $\calS_{\lambda^{k}}^\downarrow$ order $e$ and $f$ consistently unless $(e,f)\in\calP$ and $\lambda^{k-1}\leq \lambda(e,f) \leq \lambda^k$.
	But, since these are two consecutive elements of $\calE$, this is the case if and only if $\lambda(e,f)\in\Set{\lambda^{k-1},\lambda^k}$.
	
	So assume that $\lambda(e,f) = \lambda^k > \lambda^{k-1}$.
	In this case, $e$ occurs before $f$ in $\calS_{\lambda^{k}}^\downarrow$.
	By \cref{spec-alg-orderings-element-swaps}, $c_{\lambda^{k-1}}(e) < c_{\lambda^{k-1}}(f)$, so $e$ occurs before $f$ in $\calS_{\lambda^{k-1}}^\uparrow$ as well.
	
	Analogously, if $\lambda(e,f) = \lambda^{k-1} < \lambda^{k}$, then $e$ occurs after $f$ in $\calS_{\lambda^{k-1}}^\uparrow$ and $c_{\lambda^{k}}(e) > c_{\lambda^{k}}(f)$, so this is the case in $\calS_{\lambda^{k-1}}^\uparrow$, too.
\end{proof}

As a next step, we show that the bases $B^k$ that \cref{alg:ESN-matroid-specific} computes satisfy $B^k = B(\calS_{\lambda^{k}}^\uparrow)$.
For this, we need one more definition concerning matroids.
\begin{definition}
	Let $M$ be a matroid, $M=(E,\calI)$, and $E'\subseteq E$.
	The \emph{restriction of $M$ to $E'$}, denoted by $\matrestr{E'}$, is the matroid $(E',\calI')$ with $\calI'\coloneqq \Set{I\in\calI : I\subseteq E'}$.
	If $E'$ is independent, then the \emph{contraction of $M$ by $E'$} is the matroid $(E\setminus E', \calI')$ with $\calI'\coloneqq\Set{I\subseteq E\setminus E' : I \cup E'\in\calI}$.
	We denote it by $\matcontr{E'}$.
\end{definition}

\begin{lemma}
	\label{spec-alg-matroid-restr-contr-and-greedy}
	Let $\calS$ be an ordering of the elements of a matroid $M$, $e\in E$, and $\hat{\calS}$ be the ordering $\calS$ without the element $e$.
	If $e\notin B(\calS)$, then $B_{\hat{M}}(\hat{\calS}) = B(\calS)$, where $\hat{M} = \matrestr{(E-e)}$.
	If $e\in B(\calS)$, then $\Set{e} \cup B_{\hat{M}}(\hat{\calS}) = B(\calS)$, where $\hat{M} = \matcontr{\Set{e}}$.
\end{lemma}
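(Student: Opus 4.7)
The plan is to prove both parts by induction along $\calS$, tracking the intermediate states of the Greedy algorithm run on $(M,\calS)$ and on $(\hat{M},\hat{\calS})$ and showing that they stay in sync. For each element $f\neq e$, let $I_f$ and $\hat I_f$ denote the independent sets held by the two runs just before they process $f$; the goal is to show that the accept/reject decision at $f$ agrees between the two runs, from which the claimed identities follow immediately.

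For the first case ($e\notin B(\calS)$), I would observe that the running set $I$ built by Greedy on $(M,\calS)$ never contains $e$, so $I\cup\Set{f}\subseteq E-e$ for every $f\neq e$; hence $I\cup\Set{f}$ is independent in $M$ if and only if it is independent in $\hat{M}=\matrestr{(E-e)}$. Inductively over $\calS$, the two runs hold the same set at every step (the run on $\hat{M}$ merely skips the iteration for $e$), so they make identical decisions on every $f\neq e$, which yields $B_{\hat{M}}(\hat{\calS})=B(\calS)$.

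For the second case ($e\in B(\calS)$), I would prove by induction along $\calS$ that $I_f=\hat I_f$ while $e$ has not yet been processed and $I_f=\hat I_f\cup\Set{e}$ once it has. Granted this, the decisions match for every $f\neq e$: if $f$ comes after $e$, then by the definition of contraction, independence of $I_f\cup\Set{f}=\hat I_f\cup\Set{e}\cup\Set{f}$ in $M$ is the same as independence of $\hat I_f\cup\Set{f}$ in $\hat{M}=\matcontr{\Set{e}}$. If $f$ comes before $e$, one direction of the equivalence is just downward closure; for the other direction I would use that $e\in B(\calS)$ gives $J\cup\Set{e}\in\calI$, where $J$ is the state on $(M,\calS)$ just before $e$ is processed, and if Greedy on $(M,\calS)$ adds $f$ then $I_f\cup\Set{f}\subseteq J$, so $I_f\cup\Set{f}\cup\Set{e}\subseteq J\cup\Set{e}$ and hereditariness supplies the required independence. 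Summing up the matched decisions gives $B(\calS)\setminus\Set{e}=B_{\hat{M}}(\hat{\calS})$ and thereby the stated identity.

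The main obstacle is precisely the backward direction in Case 2 when $f$ precedes $e$: the Greedy on $\hat{M}$ demands independence of a set that contains $e$, which is a priori strictly stronger than independence of the same set without $e$. The hypothesis $e\in B(\calS)$ is used in exactly this spot, to provide an independent extension by $e$ of the Greedy state at the moment $e$ is processed, from which downward closure delivers the missing direction and keeps the induction going.
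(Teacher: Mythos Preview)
Your proposal is correct and follows essentially the same approach as the paper: both argue by induction along the ordering that the two Greedy runs make identical accept/reject decisions on every $f\neq e$, and both use the hypothesis $e\in B(\calS)$ at the one nontrivial spot, namely to upgrade independence of the current state plus $f$ to independence of that set together with $e$ (via the independent set $J\cup\{e\}$ at the moment $e$ is processed, combined with downward closure). The only cosmetic difference is that you split the contraction case into ``$f$ before $e$'' versus ``$f$ after $e$'', while the paper treats both at once by observing that, since $e$ is eventually added, the running set together with $e$ is always contained in an independent set; your explicit case split makes the role of the hypothesis clearer but does not change the substance.
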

\begin{proof}
	We show that the Greedy algorithm on $M$ includes or rejects an element in $E-e$ if and only if the Greedy algorithm on $\hat{M}$ does.
	In the case that $\hat{M} = \matrestr{(E-e)}$, this is easy to see:
	since $e$ is rejected, both algorithms ask for independence on the same sets $S+e'$, yielding the same answers.
	
	If $e\in B(\calS)$, then $\Set{e}$ is independent and a set $I$ is independent in $\hat{M} = \matcontr{\Set{e}}$ if and only if $I+e$ is independent in $M$.
	Let $S^k$ and $\hat{S}^k$ be the sets after Greedy regards the $k$th element $e^k$ of~$\hat{S}$.
	Initially, $S^0=\hat{S}^0$ and let $k>0$.
	The element $e^k$ is added $\hat{S}^{k-1}$ if and only if $\hat{S}^{k-1}+e+e^k$ is independent, in which case it is also added to~$S^k$.
	Conversely, if $e^k$ is added to $S^k$, then $S^k+e^k$ is independent.
	Since $e$ is included in $B(\calS)$, it is added by the algorithm at some point and $S^k+e^k+e$ is independent as well.
	Thus, the Greedy algorithms behave the same.
\end{proof}

\begin{lemma}
	\label{spec-alg-Bk-obtained-by-Slambdak}
	Let $\lambda^1<\ldots<\lambda^s$ be the values in $\calE$ and $\lambda^0\coloneqq0$.
	For $k\in\Set{0,\ldots,s}$, the basis $B^k$ computed by \cref{alg:ESN-matroid-specific} satisfies $B^k = B(\calS_{\lambda^{k}}^\uparrow)$.
\end{lemma}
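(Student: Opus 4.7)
The plan is to proceed by induction on $k$. The base case $k=0$ is immediate: $\lambda^0 = 0$ gives $c_{\lambda^0} = c_2$, so $\calS_{\lambda^0}^\uparrow$ is the lexicographic ordering by $(c_2, c_1)$ ascending, and $B^0$ is set to be the output of Greedy for exactly this ordering.

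For the inductive step, assume $B^{k-1} = B(\calS_{\lambda^{k-1}}^\uparrow)$, which by \cref{spec-alg-orderings-for-consecutive-lambdas} equals $B(\calS_{\lambda^k}^\downarrow)$. Write $\calS_1 \coloneqq \calS_{\lambda^k}^\uparrow$, $\calS_2 \coloneqq \calS_{\lambda^k}^\downarrow$, and $\hat{E} \coloneqq E \setminus E_{\lambda^k}$. A preliminary structural observation is that each $c_{\lambda^k}$-block of $E$ lies entirely in $\hat{E}$ or entirely in $E_{\lambda^k}$: if two elements $e, f$ of the block satisfy $c_1(e) > c_1(f)$, then $c_{\lambda^k}(e) = c_{\lambda^k}(f)$ together with $\lambda^k \in (0,1)$ forces $c_2(e) < c_2(f)$, so $(e,f) \in \calP$ with $\lambda(e,f) = \lambda^k$; any third element $g$ of the block has $c_1(g)$ different from at least one of $c_1(e), c_1(f)$, and pairing $g$ with that element shows $g \in E_{\lambda^k}$ as well.

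The main step is to show $B(\calS_1) \cap \hat{E} = B(\calS_2) \cap \hat{E}$. I would prove by induction on the number $j$ of $c_{\lambda^k}$-blocks processed that, if $I_j^{(i)}$ denotes the output of Greedy on $\calS_i$ after the first $j$ blocks, then $I_j^{(1)}$ and $I_j^{(2)}$ have the same closure in $M$ (that is, for every $e$, $I_j^{(1)} + e$ is independent in $M$ iff $I_j^{(2)} + e$ is). In an $\hat{E}$-block, $\calS_1$ and $\calS_2$ process the elements in the same order, and Greedy's accept/reject decision depends only on the current closure; hence the same elements are added in both runs. In an $E_{\lambda^k}$-block, both runs extend $I_{j-1}^{(i)}$ to a maximum-rank subset inside the block, and the resulting closures both equal $\text{cl}(I_{j-1}^{(i)} \cup B_j)$, which agree by the inner inductive hypothesis. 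Consequently $B(\calS_1) \cap \hat{E} = B(\calS_2) \cap \hat{E} =: I$, and the outer inductive hypothesis yields $I = B^{k-1} \setminus E_{\lambda^k}$, which is exactly the initial set of the algorithm's inner loop.

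To finish, I would apply \cref{spec-alg-matroid-restr-contr-and-greedy} iteratively to the elements of $\hat{E}$ (contracting those in $I$, deleting those not) to reduce the Greedy run on $\calS_1$ to Greedy on the matroid $\tilde{M} \coloneqq M/I \setminus (\hat{E} \setminus I)$ with ground set $E_{\lambda^k}$, under the ordering $\calS_1|_{E_{\lambda^k}}$ which is $(c_{\lambda^k}, c_1)$ ascending; its output is precisely $B(\calS_1) \cap E_{\lambda^k}$. Since a subset $S \subseteq E_{\lambda^k}$ is independent in $\tilde{M}$ iff $S \cup I$ is independent in $M$, this Greedy run coincides with the algorithm's inner loop, giving $B^k = I \cup (B(\calS_1) \cap E_{\lambda^k}) = B(\calS_1) = B(\calS_{\lambda^k}^\uparrow)$. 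The main obstacle is the closure-equality induction, which legitimises restarting Greedy from $B^{k-1} \setminus E_{\lambda^k}$ even though the two runs may have disagreed on which elements of $E_{\lambda^k}$ were chosen in the previous iteration.
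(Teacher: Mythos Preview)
Your proof is correct and follows essentially the same architecture as the paper's: induction on $k$, invoking \cref{spec-alg-orderings-for-consecutive-lambdas} to identify $B^{k-1}$ with $B(\calS_{\lambda^k}^\downarrow)$, establishing that $B(\calS_{\lambda^k}^\uparrow)$ and $B(\calS_{\lambda^k}^\downarrow)$ agree outside $E_{\lambda^k}$, and then reducing via \cref{spec-alg-matroid-restr-contr-and-greedy} to Greedy on the minor, which matches the inner loop. The only substantive difference is in how you justify the agreement outside $E_{\lambda^k}$: the paper argues tersely that $\calS_{\lambda^k}^\uparrow$ and $\calS_{\lambda^k}^\downarrow$ differ only on $E_{\lambda^k}$ and that adjacent swaps in an ordering can toggle at most the two swapped elements in the Greedy output, whereas you make the block structure explicit and run a closure-preserving induction over $c_{\lambda^k}$-blocks. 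Your version is more detailed (and your structural observation that each block lies wholly in $\hat{E}$ or wholly in $E_{\lambda^k}$ is something the paper leaves implicit), but the two arguments are interchangeable.
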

\begin{proof}
	For $B^0$ this is the case, since $\calS_0^\uparrow$ is just a lexicographic ordering by $(\objective[2]{},\objective[1]{})$.
	Assume it holds up to $B^{k-1}$, for some $k\in\Set{1,\ldots,s}$, and regard $B^k$.
	
	The operations performed by \cref{alg:ESN-matroid-specific} in iteration~$k$ are equivalent to running Greedy on the matroid $\hat{M}$ in which it all the elements in $E\setminus(B^{k-1}\cup E_{\lambda^k})$ are removed and the elements in $B^{k-1}\setminus E_{\lambda^k}$ are contracted, that is, in $\hat{M} = \matcontr[(\matrestr{(B^{k-1}\cup E_{\lambda^k})})]{(B^{k-1}\setminus E_{\lambda^k})}$.
	This is a matroid on $\hat{E} = E_{\lambda^k}$ and we use the lexicographic ordering with respect to $(c_{\lambda^k},c_1)$.
	
	By \cref{spec-alg-matroid-restr-contr-and-greedy}, if we show that $B(\calS_{\lambda^{k}}^\uparrow)$ does not contain the elements in $E\setminus(B^{k-1}\cup E_{\lambda^k})$, then we can run Greedy on $\matrestr{(B^{k-1}\cup E_{\lambda^k})}$ instead.
	If we then show that $B(\calS_{\lambda^{k}}^\uparrow)$ contains the elements in $B^{k-1}\setminus E_{\lambda^k}$, then we may contract these elements.
	With these two properties we obtain that $B^k = B(\calS_{\lambda^{k}}^\uparrow)$.
	
	Thus, we need to show that for $e\in E\setminus(B^{k-1}\cup E_{\lambda^k})$, $e\notin B(\calS_{\lambda^{k}}^\uparrow)$ and for $e\in B^{k-1}\setminus E_{\lambda^k}$, $e\in B(\calS_{\lambda^{k}}^\uparrow)$.
	This is equivalent to $B^{k-1}\setminus E_{\lambda^k} \subseteq B(\calS_{\lambda^{k}}^\uparrow)\subseteq B^{k-1}\cup E_{\lambda^k}$, which is, in turn, equivalent to
	\begin{displaymath}
		B(\calS_{\lambda^{k}}^\uparrow)\setminus E_{\lambda^k} 
		= B^{k-1}\setminus E_{\lambda^k}
		= B(\calS_{\lambda^{k-1}}^\uparrow) \setminus E_{\lambda^k}
		= B(\calS_{\lambda^{k}}^\downarrow) \setminus E_{\lambda^k}
	\end{displaymath}
	where the last two equalities use the induction hypothesis and \cref{spec-alg-orderings-for-consecutive-lambdas}.
	The orderings $\calS_{\lambda^{k}}^\uparrow$ and $\calS_{\lambda^{k}}^\downarrow$ only differ by their ordering of the sets $E_{\lambda^k}$.
	But exchanging two adjacent elements $e$ and~$e'$ in the ordering either has no effect on the result of the Greedy algorithm or it removes $e$ and adds $e'$ instead.
	Thus, the only elements, for which a different result can be obtained, are those in~$E_{\lambda^k}$.
\end{proof}

Now we know that we compute solutions $B^k=B(\calS_{\lambda^{k}}^\uparrow)$ and we just need to show that these suffice.
\begin{lemma}
	\label{spec-alg-relevant-weights-and-orderings}
	Let $y\in Y_{ESN}$, then $y = \objective{B(\calS^\uparrow_\lambda)}$ for some $\lambda\in\calE\cup\Set{0}$.
\end{lemma}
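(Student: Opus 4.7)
The plan is to fix $x\in\feasibleset_{ESE}$ with $\objective{x}=y$ and to show that $\lambda=\underline{\lambda}$ works, where $\Lambda(x)=[\underline{\lambda},\overline{\lambda}]$ is the weight set component of $x$; this interval is non-degenerate by \cref{prelims-ese-multiple-weights}. In the boundary case $\underline{\lambda}=0$, the ordering $\calS^\uparrow_0$ is lexicographic by $(c_2,c_1)$, so Greedy returns a basis with the unique lex-optimal image (minimum $c_2$, then minimum $c_1$ among those); since $0\in\Lambda(x)$ makes $c_2(x)$ minimum and the efficiency of $x$ pins down $c_1(x)$, this image equals $y$.

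For the main case $\underline{\lambda}>0$, the weight set decomposition of $[0,1]$ into finitely many closed intervals, each attached to an extreme-supported efficient solution, guarantees the existence of another such solution $x'$ whose weight set component ends at $\underline{\lambda}$. Then $x'\in\feasibleset_{\underline{\lambda}}$ with $\objective{x'}\neq y$, and since $x,x'$ are both non-dominated, a short case distinction on which of $c_1$ or $c_2$ is smaller for $x$ (using that $x$ is optimal slightly above $\underline{\lambda}$ while $x'$ is not) forces $c_1(x)<c_1(x')$. Hence $x\in\feasibleset^<(x')\cap\feasibleset_{\underline{\lambda}}$. Now \cref{matroid-alg-correctness} supplies a neighbour $x^*=x'-e+e'\in N^<(x')\cap\feasibleset_{\underline{\lambda}}$: the condition $x^*\in\feasibleset^<(x')$ forces $c_1(e)>c_1(e')$, having $x',x^*\in\feasibleset_{\underline{\lambda}}$ forces $c_{\underline{\lambda}}(e)=c_{\underline{\lambda}}(e')$, and $\underline{\lambda}<1$ then forces $c_2(e)<c_2(e')$. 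Thus $(e,e')\in\calP$ with $\lambda(e,e')=\underline{\lambda}$, placing $\underline{\lambda}\in\calE$.

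It remains to identify the image of $B(\calS^\uparrow_{\underline{\lambda}})$. The ordering $\calS^\uparrow_{\underline{\lambda}}$ agrees with the non-descending ordering by the perturbed cost $c_{\underline{\lambda}}+\varepsilon c_1$ for any sufficiently small $\varepsilon>0$, and Greedy applied to that perturbed cost returns a basis that minimises $c_{\underline{\lambda}}$ and, among such, minimises $c_1$. Thus $B(\calS^\uparrow_{\underline{\lambda}})$ is $c_1$-minimal in $\feasibleset_{\underline{\lambda}}$. By \cref{algorithm-slopes-coincide-for-same-lambda}, every image in $\objective{\feasibleset_{\underline{\lambda}}}$ lies on the single line through $y$ and $\objective{x'}$ of slope $\alpha(\underline{\lambda})$, and since $y\in\outcomeset_{ESN}$ it must be the endpoint of that segment with the smaller $c_1$-value (otherwise $y$ would lie strictly inside a linear piece of the non-dominated frontier and fail to be extreme). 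These two observations together yield $\objective{B(\calS^\uparrow_{\underline{\lambda}})}=y$. I expect this final step to be the technical crux, as it combines the Greedy perturbation argument with the convex-geometric fact that $y$ is the $c_1$-extreme image of $\feasibleset_{\underline{\lambda}}$.
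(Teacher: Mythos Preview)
Your argument is correct, but it follows a genuinely different route from the paper's. The paper does not single out the left endpoint~$\underline\lambda$ of $\Lambda(x)$. Instead it takes any $\lambda\in(0,1)$ with $y=\objective{B}$, $B\in\feasibleset_\lambda$, writes $B=B(\calS_\lambda)$ for some admissible ordering, and then locates $\lambda$ relative to the finitely many values in $\calE$: if $\lambda$ lies strictly between two consecutive values $\lambda^{k-1}<\lambda^k$, the ordering $\calS_\lambda$ is forced to equal $\calS_{\lambda^{k-1}}^\uparrow$; if $\lambda=\lambda^k\in\calE$, the paper shows that $\objective{B(\calS_\lambda)}$ is a convex combination of $\objective{B(\calS_\lambda^\downarrow)}$ and $\objective{B(\calS_\lambda^\uparrow)}$ and uses extremality of $y$ together with \cref{spec-alg-orderings-for-consecutive-lambdas} (which gives $\calS_{\lambda^k}^\downarrow=\calS_{\lambda^{k-1}}^\uparrow$) to finish. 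Your proof instead imports \cref{matroid-alg-correctness} from Section~4 to certify $\underline\lambda\in\calE$ via an explicit pair $(e,e')\in\calP$, and then identifies the Greedy output by the perturbation observation that $B(\calS^\uparrow_{\underline\lambda})$ is the $c_1$-minimal basis inside $\feasibleset_{\underline\lambda}$, combined with the fact that the extreme point $y$ must sit at the $c_1$-minimal end of the segment $\objective{\feasibleset_{\underline\lambda}}$. The paper's version is self-contained within the ordering machinery of Section~5 and avoids calling back to \cref{matroid-alg-correctness}; yours is more explicit about which $\lambda$ works and gives a clean lexicographic-optimality characterisation of $B(\calS^\uparrow_\lambda)$ that the paper leaves implicit.
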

\begin{proof}
	Let $y\in\outcomeset_{ESN}$, then $y = \objective{B}$ for $B\in\feasibleset_\lambda$ and $\lambda\in(0,1)$.
	We can obtain $B$ by the Greedy algorithm using an ordering that prefers elements of $B$ if they have equal weight $c_\lambda$.
	Since exchanging elements of identical weight in the ordering has not effect on the objective, we may assume that $B = B(\calS_\lambda)$.
	
	Let $\lambda^1<\ldots<\lambda^s$ be the points in $\calE$, $\lambda_0\coloneqq 0$, $\lambda^{s+1}\coloneqq 1$, and assume that $\lambda^{k-1} < \lambda < \lambda^k$ for some $k\in\Set{1,\ldots,s+1}$.
	Then $c_\lambda(e)=c_\lambda(f)$ if and only if $c(e) = c(f)$.
	Hence, $\calS_\lambda$ is unique and, by an analogous argument to \cref{spec-alg-orderings-for-consecutive-lambdas}, $\calS_\lambda = \calS_{\lambda^{k-1}}^\uparrow$.
	Hence, $B(\calS_\lambda) = B(\calS_{\lambda^{k-1}}^\uparrow)$.
	
	Now let $\lambda=\lambda^k$ for some $k\in\Set{1,\ldots,s}$.
	We show that $\objective{B(\calS_\lambda)}$ is a convex combination of $\objective{B(\calS_\lambda^\downarrow)}$ and $\objective{B(\calS_\lambda^\uparrow)}$.
	As a result, since $y\in\outcomeset_{ESN}$, $y\in\Set{\objective{B(\calS_\lambda^\downarrow)},\objective{B(\calS_\lambda^\uparrow)}}$.
	As $\calS_\lambda^\downarrow = \calS_{\lambda^{k-1}}^\uparrow$ by \cref{spec-alg-orderings-for-consecutive-lambdas}, the claim follows.
	
	Since $B(\calS_\lambda),\, B(\calS_\lambda^\downarrow),\, B(\calS_\lambda^\uparrow)\in\feasibleset_{\lambda}$, their images $y,\, y^\downarrow,\, y^\uparrow$ all lie on the line $\Set{y' : w\T y' = w\T y}$, where $w=\left(\lambda,1-\lambda\right)\T$.
	Thus, we can write $y = \mu y^\downarrow + (1-\mu)y^\uparrow$.
	
	Since we can transform $\calS_\lambda$ into $\calS_\lambda^\downarrow$ by exchanging adjacent elements $e,\, f$ with $c_\lambda(e) = c_\lambda(f)$ and $c_1(e) \leq c_1(f)$, we see that $c_1(B(\calS_\lambda^\downarrow)) \geq c_1(B(\calS_\lambda))$.
	Analogously, $c_1(B(\calS_\lambda^\uparrow)) \leq c_1(B(\calS_\lambda))$.
	This yields $\mu\in[0,1]$ as required.
\end{proof}

This shows correctness.
\begin{theorem}
	\Cref{alg:ESN-matroid-specific} is correct, that is, it computes a set $S\subseteq\feasibleset_{SE}$ with $\objective{S} = \outcomeset_{ESN}$.
	More precisely, it actually computes a set $S\subseteq \feasibleset_{ESE}$.
\end{theorem}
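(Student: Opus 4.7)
The plan is to combine \cref{spec-alg-Bk-obtained-by-Slambdak,spec-alg-relevant-weights-and-orderings} to establish both inclusions $\outcomeset_{ESN}\subseteq\objective{S}$ and $\objective{S}\subseteq\outcomeset_{ESN}$ simultaneously, while strengthening the conclusion to $S\subseteq\feasibleset_{ESE}$. The preceding lemmas already do the heavy lifting by matching the bases $B^k$ produced by the algorithm with the outputs of the Greedy algorithm on the orderings $\calS^\uparrow_{\lambda}$ at weights $\lambda\in\calE\cup\Set{0}$; the task is to translate this correspondence into statements about extreme-supported non-dominated points.

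For the inclusion $\outcomeset_{ESN}\subseteq\objective{S}$, I would take an arbitrary $y\in\outcomeset_{ESN}$ and apply \cref{spec-alg-relevant-weights-and-orderings} to write $y=\objective{B(\calS^\uparrow_\lambda)}$ for some $\lambda\in\calE\cup\Set{0}$. With $\lambda^0\coloneqq 0$ and $\calE=\Set{\lambda^1,\ldots,\lambda^s}$, there is a $k\in\Set{0,\ldots,s}$ with $\lambda=\lambda^k$. Then \cref{spec-alg-Bk-obtained-by-Slambdak} identifies $B(\calS^\uparrow_{\lambda^k})$ with $B^k\in S$, so $y\in\objective{S}$.

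For $S\subseteq\feasibleset_{ESE}$ (which in particular yields $\objective{S}\subseteq\outcomeset_{ESN}$), I would argue that each $B^k$ is optimal for $\problempws{\lambda}$ on a nondegenerate sub-interval of $\Lambda$ and then invoke \cref{prelims-ese-multiple-weights}. The basis $B^0$ is lexicographically optimal with respect to $(\objective[2]{},\objective[1]{})$ and is thus extreme-supported efficient by the remark at the start of \cref{sec:algorithm}. For $k\in\Set{1,\ldots,s-1}$, \cref{spec-alg-orderings-for-consecutive-lambdas} gives $\calS^\uparrow_{\lambda^k}=\calS^\downarrow_{\lambda^{k+1}}$, so $B^k=B(\calS^\uparrow_{\lambda^k})=B(\calS^\downarrow_{\lambda^{k+1}})$ is the output of the Greedy algorithm for two valid non-descending orderings at $\lambda^k$ and $\lambda^{k+1}$ and hence optimal at both weights. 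Since $c_\lambda(B^k)-c_\lambda(B')$ is affine in $\lambda$ for every competing basis $B'$, optimality extends to the whole interval $[\lambda^k,\lambda^{k+1}]$, and \cref{prelims-ese-multiple-weights} gives $B^k\in\feasibleset_{ESE}$. The boundary cases $k=0$ and $k=s$ are handled analogously, using that $\calS^\uparrow_0$ remains a valid ordering throughout $[0,\lambda^1]$ and $\calS^\uparrow_{\lambda^s}$ throughout $[\lambda^s,1]$, because no further breakpoints of $\calE$ lie in these intervals.

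The main obstacle I anticipate is precisely this extreme-supportedness step: knowing that $B^k$ is Greedy-optimal for the single ordering $\calS^\uparrow_{\lambda^k}$ only yields supported efficiency, and one must promote this to optimality on a full sub-interval of $\Lambda$ before \cref{prelims-ese-multiple-weights} applies. The cleanest route is to exploit the identity $\calS^\uparrow_{\lambda^k}=\calS^\downarrow_{\lambda^{k+1}}$ from \cref{spec-alg-orderings-for-consecutive-lambdas}, which pins down optimality at the two endpoints $\lambda^k$ and $\lambda^{k+1}$, and then to rely on affine-linearity of weighted-sum objectives to fill in the interior, rather than attempting a direct inspection of orderings for every intermediate $\lambda$.
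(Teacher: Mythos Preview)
Your proposal is correct and follows essentially the same approach as the paper: combine \cref{spec-alg-relevant-weights-and-orderings,spec-alg-Bk-obtained-by-Slambdak} for $\outcomeset_{ESN}\subseteq\objective{S}$, and use \cref{spec-alg-orderings-for-consecutive-lambdas} to see that $B^k=B(\calS^\uparrow_{\lambda^k})=B(\calS^\downarrow_{\lambda^{k+1}})$ is optimal at the two distinct weights $\lambda^k<\lambda^{k+1}$ (with $\lambda^0\coloneqq0$, $\lambda^{s+1}\coloneqq1$), then invoke \cref{prelims-ese-multiple-weights}. The affine-linearity argument filling in the interval $[\lambda^k,\lambda^{k+1}]$ is unnecessary, since \cref{prelims-ese-multiple-weights} only requires $\Lambda(B^k)$ to contain more than one weight, not a full sub-interval.
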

\begin{proof}
	By \cref{spec-alg-relevant-weights-and-orderings}, any $y\in\outcomeset_{ESN}$ can be written as $\objective{B(\calS_\lambda^\uparrow)}$ for $\lambda\in\calE\cup\{0\}$ and by \cref{spec-alg-Bk-obtained-by-Slambdak}, we compute all these values.
	
	To see that each $B^k$ computed by the algorithm is extreme-supported, we simply realise that, by \cref{spec-alg-Bk-obtained-by-Slambdak,spec-alg-orderings-for-consecutive-lambdas}, $B^k = B(\calS_{\lambda^{k}}^\uparrow) = B(\calS_{\lambda^{k+1}}^\downarrow)$, so $\Lambda(B^k)$ contains at least the two elements $\lambda^{k} < \lambda^{k+1}$, where $\lambda^0\coloneqq 0$ and $\lambda^{s+1}\coloneqq 1$.
	Using \cref{prelims-ese-multiple-weights}, we get the desired result.
\end{proof}

\subsection{Implementing Algorithm~\ref{alg:ESN-matroid-specific}}
How do we implement \cref{alg:ESN-matroid-specific} efficiently?
Initially, we compute the intersection points $\lambda(e,f)$ for the pairs $(e,f)\in\calP$, adding both to the set $E_{\lambda(e,f)}$.
Combined with the sorting of the intersection points obtained, this takes $\O{m^2\log m}$ time.
Computing $B^0$ is again done by the Greedy algorithm and takes time $\O{m\cdot(\log m + \tindep)}$.

In the iteration for $\lambda\in\calE$, we sort a set $E_{\lambda}$ and then test independence for each of its elements.
Hence, this takes $\O{\abs{E_{\lambda}}\cdot(\log \abs{E_{\lambda}} + \tindep)}$ time.
Since each pair in $\calP$ results in at most two elements being added to a set $E_{\lambda}$, we can conclude that $\sum_{\lambda\in\calE} \abs{E_{\lambda}} \in\O{m^2}$.
Thus, these iterations take $\O{m^2\cdot(\log m + \tindep)}$ time, dominating our prior value.

\begin{theorem}
	\Cref{alg:ESN-matroid-specific} can be implemented to run in $\O{m^2\cdot(\log m + \tindep)}$ time.
\end{theorem}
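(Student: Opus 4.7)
The plan is to bound the preprocessing cost and the main-loop cost separately and observe that the main loop dominates, matching the sketch already given at the end of \cref{sec:tailored-alg}.

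First I would analyse the preprocessing. Enumerating the $\O{m^2}$ pairs in $\calP$, computing each intersection point $\lambda(e,f)$ in constant time and inserting $e,\,f$ into the set $E_{\lambda(e,f)}$ stored in a balanced search tree (or hash map with a final sort) keyed by $\lambda$, yields both the sorted sequence $\lambda^1<\ldots<\lambda^s$ and the collection $\Set{E_{\lambda^k}}_{k=1}^{s}$ in $\O{m^2\log m}$ total time. Computing the initial basis $B^0$ by running Greedy on the lexicographic ordering $(\objective[2]{},\objective[1]{})$ costs the usual $\O{m\cdot(\log m+\tindep)}$.

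For iteration $k$ of the main loop I would show that the work is $\O{|E_{\lambda^k}|\cdot(\log m + \tindep)}$. This breaks down as: a linear-time set difference to form $B^{k-1}\setminus E_{\lambda^k}$ (storing the basis as a hash set, so each removal is $\O{1}$); a comparison sort of $E_{\lambda^k}$ by $(c_{\lambda^k},c_1)$ in $\O{|E_{\lambda^k}|\log m}$; and the $|E_{\lambda^k}|$ independence checks performed in the Greedy pass. The key step is then the double-counting bound $\sum_{k=1}^{s} |E_{\lambda^k}|\leq 2|\calP|\in\O{m^2}$, which holds because each pair $(e,f)\in\calP$ contributes its two elements to exactly one set $E_{\lambda(e,f)}$.

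The main (mild) obstacle is recognising that a per-element bound on the loop work would only give the weaker $\O{m^3\tindep}$ of \cref{sec:matroid-alg}, since a single $e\in E$ can appear in many sets $E_\lambda$. Using the pair-based bound above instead, the loop cost sums to $\O{m^2\cdot(\log m + \tindep)}$, which dominates both the $\O{m^2\log m}$ preprocessing and the $\O{m\cdot(\log m+\tindep)}$ call that produces $B^0$, yielding the claimed overall running time.
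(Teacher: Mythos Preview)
Your proposal is correct and follows essentially the same argument as the paper: bound the preprocessing by $\O{m^2\log m}$, the initial Greedy call by $\O{m\cdot(\log m+\tindep)}$, and each iteration by $\O{|E_{\lambda^k}|\cdot(\log m+\tindep)}$, then use the pair-counting bound $\sum_k |E_{\lambda^k}|\in\O{m^2}$ to sum the loop cost. The only cosmetic difference is that you spell out the data structures (hash set for the basis, balanced search tree for the $\lambda$-values) slightly more explicitly than the paper does.
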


We want to complete this section by providing a brief comparison to dichotomic search, which has a running time of $\Theta(\abs{\outcomeset_{ESN}}\cdot m\cdot (\log m + \tindep))$~\cite{AN79}.
From computational geometry~\cite{Dey98,Epp98}, we know that $\abs{\outcomeset_{ESN}}\in\Theta(mr^{\tfrac{1}{3}})$.
We would thus obtain $\Theta(m^2r^{\tfrac{1}{3}}\cdot (\log m + \tindep))$ as the running time for dichotomic search, which is worse than the running time of our algorithm.

This is to be taken with a grain of salt however, since the independence tests we run differ:
dichotomic search runs the Greedy algorithm while \Cref{alg:ESN-matroid-specific} removes and readds elements, which can make a real difference.
In the case of spanning trees, for example, doing the independence tests in the Greedy algorithm takes $\O{\log n}$  time~\cite{AMO93}, so it is dominated by the sorting, where the independence tests for \Cref{alg:ESN-matroid-specific} require us to check the existence of a cycle, so they need $\O{n}$ time, putting dichotomic search in the lead again.
\section{Conclusion and Future Research}
\label{sec:concl}

In this paper, we presented a generic way to obtain an alternative to dichotomic search for combinatorial problems, provided a sufficiently strong notion of adjacency.
Unlike dichotomic search, it operates in the decision space and, despite our use of weight set arguments, it does not need to solve any weighted sum scalarisation problem.
Instead, it just needs to evaluate adjacent solutions and transition to the best one.

We used this generic algorithm to solve the bi-objective minimum weight basis problem using the natural definition of adjacency.
We analysed its running time and showed that it is polynomial before developing an optimised algorithm for this special case.

Our algorithm can be extended to more objectives in the same way that dichotomic search is extended in \cite{PGE10}, which would yield an algorithm that is a hybrid between a decision and an image space method.
An interesting question is whether it can be extended to a purely decision space method, for which multi-objective Simplex algorithms could provide an orientation.

Finally, it is interesting to apply the method to more combinatorial optimisation problems and compare it computationally to dichotomic search.
In particular, it is interesting to see whether the optimised algorithm for the minimum weight basis problem is competitive with dichotomic search for a specific class of matroids.

\printbibliography

@Book{AMO93,
	author    = {Ahuja, Ravindra K. and Magnanti, Thomas L. and Orlin, James B.},
	publisher = {Prentice Hall},
	title     = {Network Flows: Theory, Algorithms, and Applications},
	year      = {1993},
	address   = {Englewood Cliffs, NJ},
	isbn      = {978-0-13-617549-0},
	file      = {:[AMO93]Network Flows.pdf:PDF},
	groups    = {Textbooks},
}

@Article{AN79,
	author    = {Aneja, Yash P. and Nair, Kunhiraman P. K.},
	journal   = {Management Science},
	title     = {Bicriteria Transportation Problem},
	year      = {1979},
	issn      = {1526-5501},
	month     = jan,
	number    = {1},
	pages     = {73--78},
	volume    = {25},
	doi       = {10.1287/mnsc.25.1.73},
	groups    = {Multicriteria},
	publisher = {Institute for Operations Research and the Management Sciences (INFORMS)},
}

@Book{AZ18,
	author    = {Aigner, Martin and Ziegler, Günter M.},
	publisher = {Springer Berlin Heidelberg},
	title     = {Proofs from THE BOOK},
	year      = {2018},
	isbn      = {9783662572658},
	doi       = {10.1007/978-3-662-57265-8},
	groups    = {Textbooks},
}

@Article{Bac26,
	author        = {Bachtler, Oliver},
	title         = {Folklore in Multi-Objective Optimisation},
	year          = {2026},
	month         = jan,
	archiveprefix = {arXiv},
	copyright     = {arXiv.org perpetual, non-exclusive license},
	eprint        = {2601.15499},
	groups        = {Multicriteria},
	keywords      = {Optimization and Control (math.OC), FOS: Mathematics},
	primaryclass  = {math.OC},
	publisher     = {arXiv},
	version		  = {1},
}

@Article{Ben98,
	author    = {Benson, Harold P.},
	journal   = {Journal of Global Optimization},
	title     = {An Outer Approximation Algorithm for Generating All Efficient Extreme Points in the Outcome Set of a Multiple Objective Linear Programming Problem},
	year      = {1998},
	issn      = {1573-2916},
	month     = jan,
	number    = {1},
	pages     = {1--24},
	volume    = {13},
	doi       = {10.1023/a:1008215702611},
	file      = {:[Ben98]An outer approximation algorithm for generating all efficient extreme points in the outcome set of a multiobjective linear programming problem.pdf:PDF},
	groups    = {Multicriteria},
	publisher = {Springer Science and Business Media LLC},
}

@Article{BPST24,
	author    = {Bökler, Fritz and Parragh, Sophie N. and Sinnl, Markus and Tricoire, Fabien},
	journal   = {Mathematical Methods of Operations Research},
	title     = {An outer approximation algorithm for generating the Edgeworth–Pareto hull of multi-objective mixed-integer linear programming problems},
	year      = {2024},
	issn      = {1432-5217},
	month     = jan,
	number    = {1},
	pages     = {263--290},
	volume    = {100},
	doi       = {10.1007/s00186-023-00847-8},
	file      = {:[BPST24]An outer approximation algorithm for generating the Edgeworth–Pareto hull of multi-objective mixed-integer linear programming problems.pdf:PDF},
	groups    = {Multicriteria},
	publisher = {Springer Science and Business Media LLC},
}

@Article{BRTV21,
	author    = {Bazgan, Cristina and Ruzika, Stefan and Thielen, Clemens and Vanderpooten, Daniel},
	journal   = {Theory of Computing Systems},
	title     = {The Power of the Weighted Sum Scalarization for Approximating Multiobjective Optimization Problems},
	year      = {2021},
	issn      = {1433-0490},
	month     = nov,
	number    = {1},
	pages     = {395--415},
	volume    = {66},
	doi       = {10.1007/s00224-021-10066-5},
	file      = {:[BRTV21]The Power of the Weighted Sum Scalarization for Approximating Multiobjective Optimization Problems.pdf:PDF},
	groups    = {Multicriteria},
	publisher = {Springer Science and Business Media LLC},
}

@Article{Bru69,
	author    = {Brualdi, Richard A.},
	journal   = {Bulletin of the Australian Mathematical Society},
	title     = {Comments on bases in dependence structures},
	year      = {1969},
	issn      = {1755-1633},
	month     = aug,
	number    = {2},
	pages     = {161--167},
	volume    = {1},
	doi       = {10.1017/s000497270004140x},
	file      = {:[Bru69]Comments on bases in dependence structures.pdf:PDF},
	groups    = {Matroids},
	publisher = {Cambridge University Press (CUP)},
}

@Article{BS02,
	author    = {Benson, Harold P. and Sun, Erjiang},
	journal   = {European Journal of Operational Research},
	title     = {A Weight Set Decomposition Algorithm for Finding All Efficient Extreme Points in the Outcome Set of a Multiple Objective Linear Program},
	year      = {2002},
	issn      = {0377-2217},
	month     = may,
	number    = {1},
	pages     = {26--41},
	volume    = {139},
	doi       = {10.1016/s0377-2217(01)00153-9},
	file      = {:[BS02]A weight set decomposition algorithm for finding all efficient extreme points in the outcome set of a multiple objective linear program.pdf:PDF},
	groups    = {Multicriteria},
	publisher = {Elsevier BV},
}

@PhdThesis{Chl25,
	author    = {Chlumsky-Harttmann, Fabian},
	title     = {Robust Multi-Objective Optimization: Analysis and Algorithmic Approaches},
	year      = {2025},
	copyright = {Creative Commons 4.0 - Namensnennung, nicht kommerziell, keine Bearbeitung (CC BY-NC-ND 4.0)},
	doi       = {10.26204/KLUEDO/8859},
	file      = {:[Chl25]Robust multi-objective optimization - analysis and algorithmic approaches.pdf:PDF},
	groups    = {Multicriteria},
	keywords  = {510 Mathematik},
	language  = {en},
	publisher = {Rheinland-Pfälzische Technische Universität Kaiserslautern-Landau},
}

@techreport{dSCF04,
	title={Geometrical configuration of the Pareto frontier of bi-criteria $\{0,1\}$-knapsack problems},
	author={da Silva, C Gomes and Cl{\'\i}maco, Jo{\~a}o and Figueira, Jos{\'e}},
	year={2004},
	institution={INESC, Coimbra, Portugal}
}

@Article{Dey98,
	author    = {Dey, Tamal K.},
	journal   = {Discrete \& Computational Geometry},
	title     = {Improved Bounds for Planar k-Sets and Related Problems},
	year      = {1998},
	issn      = {0179-5376},
	month     = mar,
	number    = {3},
	pages     = {373--382},
	volume    = {19},
	doi       = {10.1007/pl00009354},
	file      = {:[Dey98]Improved Bounds for Planar k-Sets and Related Problems.pdf:PDF},
	groups    = {Matroids},
	publisher = {Springer Science and Business Media LLC},
}

@Article{EK97,
	author    = {Ehrgott, Matthias and Klamroth, Kathrin},
	journal   = {European Journal of Operational Research},
	title     = {Connectedness of efficient solutions in multiple criteria combinatorial optimization},
	year      = {1997},
	issn      = {0377-2217},
	month     = feb,
	number    = {1},
	pages     = {159--166},
	volume    = {97},
	doi       = {10.1016/s0377-2217(96)00116-6},
	file      = {:[EK97]Connectedness of efficient solutions in multiple criteria combinatorial optimization.pdf:PDF},
	groups    = {Multicriteria},
	publisher = {Elsevier BV},
}

@Article{Ehr96,
	author    = {Ehrgott, Matthias},
	journal   = {Optimization},
	title     = {On matroids with multiple objectives},
	year      = {1996},
	issn      = {1029-4945},
	month     = jan,
	number    = {1},
	pages     = {73--84},
	volume    = {38},
	doi       = {10.1080/02331939608844238},
	file      = {:[Ehr96]On matroids with multiple objectives.pdf:PDF},
	groups    = {Multicriteria},
	publisher = {Informa UK Limited},
}

@Book{Ehr05,
	author    = {Ehrgott, Matthias},
	publisher = {Springer},
	title     = {Multicriteria Optimization},
	year      = {2005},
	address   = {Berlin},
	edition   = {Second},
	isbn      = {978-3-540-21398-7},
	number    = {491},
	series    = {Lecture Notes in Economics and Mathematical Systems},
	file      = {:[Ehr05]Multicriteria Optimization.pdf:PDF},
	groups    = {Textbooks, Multicriteria},
}

@Article{ELS11,
	author    = {Ehrgott, Matthias and Löhne, Andreas and Shao, Lizhen},
	journal   = {Journal of Global Optimization},
	title     = {A Dual Variant of Benson’s Outer Approximation Algorithm for Multiple Objective Linear Programming},
	year      = {2011},
	issn      = {1573-2916},
	month     = apr,
	number    = {4},
	pages     = {757--778},
	volume    = {52},
	doi       = {10.1007/s10898-011-9709-y},
	file      = {:[ELS11]A dual variant of Benson’s outer approximation algorithm for multiple objective linear programming.pdf:PDF},
	groups    = {Multicriteria},
	publisher = {Springer Science and Business Media LLC},
}

@Article{Epp98,
	author    = {Eppstein, David},
	journal   = {Discrete \& Computational Geometry},
	title     = {Geometric Lower Bounds for Parametric Matroid Optimization},
	year      = {1998},
	issn      = {0179-5376},
	month     = dec,
	number    = {4},
	pages     = {463--476},
	volume    = {20},
	doi       = {10.1007/pl00009396},
	file      = {:[Epp98]Geometric Lower Bounds for Parametric Matroid Optimization.pdf:PDF},
	groups    = {Matroids},
	publisher = {Springer Science and Business Media LLC},
}

@InBook{EW05,
	author    = {Ehrgott, Matthias and Wiecek, Margaret M.},
	pages     = {667--708},
	publisher = {Springer New York},
	title     = {Mutiobjective Programming},
	year      = {2005},
	isbn      = {978-0-387-23081-8},
	booktitle = {Multiple Criteria Decision Analysis: State of the Art Surveys},
	doi       = {10.1007/0-387-23081-5_17},
	file      = {:[EW05]Multiobjective Programming.pdf:PDF},
	groups    = {Multicriteria},
	issn      = {0884-8289},
}

@Book{Gor10,
	author    = {Gorski, Jochen},
	publisher = {Shaker},
	title     = {Multiple objective optimization and implications for single objective optimization},
	year      = {2010},
	address   = {Aachen},
	isbn      = {9783832295899},
	series    = {Berichte aus der Mathematik},
	file      = {:[Gor10]Multiple objective optimization and implications for single objective optimization.pdf:PDF},
	groups    = {Multicriteria},
	pagetotal = {283},
	ppn_gvk   = {641233426},
}

@Article{GKR11,
	author    = {Gorski, Jochen and Klamroth, Kathrin and Ruzika, Stefan},
	journal   = {Journal of Optimization Theory and Applications},
	title     = {Connectedness of Efficient Solutions in Multiple Objective Combinatorial Optimization},
	year      = {2011},
	issn      = {1573-2878},
	month     = apr,
	number    = {3},
	pages     = {475--497},
	volume    = {150},
	doi       = {10.1007/s10957-011-9849-8},
	file      = {:[GKR11]Connectedness of Efficient Solutions in Multiple Objective Combinatorial Optimization.pdf:PDF},
	groups    = {Multicriteria},
	publisher = {Springer Science and Business Media LLC},
}

@Article{HDPR20,
	author    = {Halffmann, Pascal and Dietz, Tobias and Przybylski, Anthony and Ruzika, Stefan},
	journal   = {Journal of Global Optimization},
	title     = {An Inner Approximation Method to Compute the Weight Set Decomposition of a Triobjective Mixed-Integer Problem},
	year      = {2020},
	issn      = {1573-2916},
	month     = mar,
	number    = {4},
	pages     = {715--742},
	volume    = {77},
	doi       = {10.1007/s10898-020-00898-9},
	file      = {:[HDPR20]An inner approximation method to compute the weight set decomposition of a triobjective mixed-integer problem.pdf:PDF},
	groups    = {Multicriteria},
	publisher = {Springer Science and Business Media LLC},
}

@Article{HR94,
	author    = {Hamacher, Horst W. and Ruhe, Günter},
	journal   = {Annals of Operations Research},
	title     = {On spanning tree problems with multiple objectives},
	year      = {1994},
	issn      = {1572-9338},
	month     = dec,
	number    = {4},
	pages     = {209--230},
	volume    = {52},
	doi       = {10.1007/bf02032304},
	groups    = {Multicriteria},
	publisher = {Springer Science and Business Media LLC},
}

@Article{Ise77,
	author    = {Isermann, Heinz},
	journal   = {Journal of the Operational Research Society},
	title     = {The Enumeration of the Set of All Efficient Solutions for a Linear Multiple Objective Program},
	year      = {1977},
	issn      = {1476-9360},
	month     = oct,
	number    = {3},
	pages     = {711--725},
	volume    = {28},
	doi       = {10.1057/jors.1977.147},
	file      = {:[Ise77]The Enumeration of the Set of All Efficient Solutions for a Linear Multiple Objective Program.pdf:PDF},
	groups    = {Multicriteria},
	publisher = {Informa UK Limited},
}

@InBook{Jah85,
	author    = {Jahn, Johannes},
	pages     = {45--88},
	publisher = {Springer Vienna},
	title     = {Scalarization in Multi Objective Optimization},
	year      = {1985},
	isbn      = {978-3-7091-2822-0},
	booktitle = {Mathematics of Multi Objective Optimization},
	doi       = {10.1007/978-3-7091-2822-0_3},
	file      = {:[Jah85]Scalarization in Multi Objective Optimization.pdf:PDF},
	groups    = {Multicriteria},
}

@Article{KS25,
	author    = {Könen, David and Stiglmayr, Michael},
	journal   = {Discrete Applied Mathematics},
	title     = {An output-polynomial time algorithm to determine all supported efficient solutions for multi-objective integer network flow problems},
	year      = {2025},
	issn      = {0166-218X},
	month     = dec,
	pages     = {1--14},
	volume    = {376},
	doi       = {10.1016/j.dam.2025.06.001},
	file      = {:[KS25]An output-polynomial time algorithm to determine all supported efficient solutions for multi-objective integer network flow problems.pdf:PDF},
	groups    = {Multicriteria},
	publisher = {Elsevier BV},
}

@Article{MM02,
	author    = {Miettinen, Kaisa and Mäkelä, Marko M.},
	journal   = {OR Spectrum},
	title     = {On scalarizing functions in multiobjective optimization},
	year      = {2002},
	issn      = {1436-6304},
	month     = may,
	number    = {2},
	pages     = {193--213},
	volume    = {24},
	doi       = {10.1007/s00291-001-0092-9},
	file      = {:[MM02]On scalarizing functions in multiobjective optimization.pdf:PDF},
	groups    = {Multicriteria},
	publisher = {Springer Science and Business Media LLC},
}

@Article{Nac78,
	author    = {Naccache, P. H.},
	journal   = {Journal of Optimization Theory and Applications},
	title     = {Connectedness of the set of nondominated outcomes in multicriteria optimization},
	year      = {1978},
	issn      = {1573-2878},
	month     = jul,
	number    = {3},
	pages     = {459--467},
	volume    = {25},
	doi       = {10.1007/bf00932907},
	file      = {:[Nac78]Connectedness of the set of nondominated outcomes in multicriteria optimization.pdf:PDF},
	groups    = {Multicriteria},
	publisher = {Springer Science and Business Media LLC},
}

@Book{NW88,
	author    = {Nemhauser, George and Wolsey, Laurence},
	publisher = {Wiley},
	title     = {Integer and Combinatorial Optimization},
	year      = {1988},
	isbn      = {9781118627372},
	month     = jun,
	doi       = {10.1002/9781118627372},
	file      = {:[NW88]Integer and Combinatorial Optimization.pdf:PDF},
	groups    = {Textbooks},
}

@Article{OK10,
	author    = {Özpeynirci, Özgür and Köksalan, Murat},
	journal   = {Management Science},
	title     = {An Exact Algorithm for Finding Extreme Supported Nondominated Points of Multiobjective Mixed Integer Programs},
	year      = {2010},
	issn      = {1526-5501},
	month     = dec,
	number    = {12},
	pages     = {2302--2315},
	volume    = {56},
	doi       = {10.1287/mnsc.1100.1248},
	groups    = {Multicriteria},
	publisher = {Institute for Operations Research and the Management Sciences (INFORMS)},
}

@Book{Oxl11,
	author    = {Oxley, James},
	publisher = {Oxford University Press},
	title     = {Matroid Theory},
	year      = {2011},
	edition   = {Second},
	isbn      = {978-0-19-856694-6},
	month     = feb,
	doi       = {10.1093/acprof:oso/9780198566946.001.0001},
}

@Article{PGE10,
	author    = {Przybylski, Anthony and Gandibleux, Xavier and Ehrgott, Matthias},
	journal   = {INFORMS Journal on Computing},
	title     = {A Recursive Algorithm for Finding All Nondominated Extreme Points in the Outcome Set of a Multiobjective Integer Programme},
	year      = {2010},
	issn      = {1526-5528},
	month     = aug,
	number    = {3},
	pages     = {371--386},
	volume    = {22},
	doi       = {10.1287/ijoc.1090.0342},
	file      = {:[PGE10]A Recursive Algorithm for Finding All Nondominated Extreme Points in the Outcome Set of a Multiobjective Integer Programme.pdf:PDF},
	groups    = {Multicriteria},
	publisher = {Institute for Operations Research and the Management Sciences (INFORMS)},
}

@Article{PGE06,
	author    = {Przybylski, Anthony and Gandibleux, Xavier and Ehrgott, Matthias},
	journal   = {Computers \& Operations Research},
	title     = {The biobjective integer minimum cost flow problem—incorrectness of Sedeño-Noda and Gonzàlez-Martin’s algorithm},
	year      = {2006},
	issn      = {0305-0548},
	month     = may,
	number    = {5},
	pages     = {1459--1463},
	volume    = {33},
	doi       = {10.1016/j.cor.2004.11.001},
	file      = {:[PGE06]The biobjective integer minimum cost flow problem.pdf:PDF},
	groups    = {Multicriteria},
	publisher = {Elsevier BV},
}

@Article{Say24,
	author    = {Sayın, Serpil},
	journal   = {Journal of Multi-Criteria Decision Analysis},
	title     = {Supported nondominated points as a representation of the nondominated set: An empirical analysis},
	year      = {2024},
	issn      = {1099-1360},
	month     = jan,
	number    = {1–2},
	volume    = {31},
	doi       = {10.1002/mcda.1829},
	file      = {:[Say24]Supported nondominated points as a representation of the nondominated set An empirical analysis.pdf:PDF},
	groups    = {Multicriteria},
	publisher = {Wiley},
}

@Article{SG54,
	author    = {Saaty, Thomas and Gass, Saul},
	journal   = {Journal of the Operations Research Society of America},
	title     = {Parametric Objective Function (Part 1)},
	year      = {1954},
	issn      = {2326-3229},
	month     = aug,
	number    = {3},
	pages     = {316--319},
	volume    = {2},
	doi       = {10.1287/opre.2.3.316},
	file      = {:[SG54]Parametric Objective Function (Part 1).pdf:PDF},
	groups    = {Multicriteria},
	publisher = {Institute for Operations Research and the Management Sciences (INFORMS)},
}

\clearpage

\section*{Funding statement}
Oliver Bachtler was funded by the Deutsche Forschungsgemeinschaft (DFG, German Research Foundation) - GRK 2982, 516090167 \enquote{Mathematics of Interdisciplinary Multiobjective Optimization}

\section*{Disclosure statement}
The authors report there are no competing interests to declare.

\end{document}